\definecolor{gr}{rgb}   {0.,   0.69,   0.23 }
\definecolor{bl}{rgb}   {0.,   0.5,   1. }
\definecolor{mg}{rgb}   {0.85,  0.,    0.85}
\definecolor{yl}{rgb}   {0.8,  0.7,   0.}
\definecolor{or}{rgb}  {0.7,0.2,0.2}
\tikzset{
	dot/.style={circle,fill=black,draw=black,inner sep=0pt,minimum size=0.5mm},
	>=stealth,
	}
\tikzset{
	dot2/.style={circle,fill=black,draw=black,inner sep=0pt,minimum size=0.2mm},
	>=stealth,
	}
\tikzset{
	ddot/.style={circle,fill=black,draw=black,inner sep=0pt,minimum size=0.8mm},
	>=stealth,
	}
\tikzset{decision/.style={ 
        draw,
        diamond,
        aspect=1.5
    }}
\tikzset{dia2/.style
={diamond,fill=white,draw=black,inner sep=0pt,minimum size=1mm},
	>=stealth,
	}
\tikzset{dia/.style
={star,fill=black,draw=black,inner sep=0pt,minimum size=1mm},
	>=stealth,
	}
\tikzset{dia/.style
={diamond,fill=black,draw=black,inner sep=0pt,minimum size=1.3mm},
	>=stealth,
	}
\def\DeclareSymbol#1#2#3{\xsavebox{#1}{\tikz[baseline=#2,scale=0.15]{#3}}}
\def\<#1>{\xusebox{#1}}
\newcommand{\pe}{\mathbin{\scaleobj{0.7}{\tikz \draw (0,0) node[shape=circle,draw,inner sep=0pt,minimum size=8.5pt] {\scriptsize  $=$};}}}
\newcommand{\pl}{\mathbin{\scaleobj{0.7}{\tikz \draw (0,0) node[shape=circle,draw,inner sep=0pt,minimum size=8.5pt] {\scriptsize $<$};}}}
\newcommand{\pg}{\mathbin{\scaleobj{0.7}{\tikz \draw (0,0) node[shape=circle,draw,inner sep=0pt,minimum size=8.5pt] {\scriptsize $>$};}}}
\newcommand{\pez}{\mathbin{\scaleobj{0.7}{\tikz \draw (0,0) node[shape=circle,draw,
fill=white, 
inner sep=0pt,minimum size=8.5pt]{} ;}}}
\tikzset{>=stealth',
         cvertex/.style={circle,draw=black,inner sep=1pt,outer sep=3pt},
         vertex/.style={circle,fill=black,inner sep=1pt,outer sep=3pt},
         star/.style={circle,fill=yellow,inner sep=0.75pt,outer sep=0.75pt},
         tvertex/.style={inner sep=1pt,font=\scriptsize},
         gap/.style={inner sep=0.5pt,fill=white}}
\tikzstyle{mybox} = [draw=black, fill=blue!10, very thick,
\tikzstyle{boxtitle} =[fill=blue!50, text=white,rectangle,rounded corners]
\tikzstyle{decision} = [diamond, draw, fill=blue!20,
\tikzstyle{block} = [rectangle, draw, fill=blue!20,
\tikzstyle{line} = [draw, very thick, color=black!50, -latex']
\tikzstyle{cloud} = [draw, ellipse,fill=red!40, 
\tikzstyle{cloud2} = [draw, ellipse,fill=red!30, text=white,text width=10em, node distance=2.5cm, text centered, minimum height=4em]
\tikzstyle{cloud3} = [draw, ellipse, fill=cyan!30, 
\tikzstyle{cloud4} = [draw, ellipse,fill=orange!70, node distance=2.5cm,
\tikzstyle{cloud5} = [draw, ellipse,fill=red!20, node distance=2.5cm,
\tikzstyle{cloud6} = [draw, ellipse,fill=red!20, node distance=2.5cm,
\newcommand{\arrow}[2][20]
 {
  \hspace{-5pt}
  \begin{tikzpicture}
   \node (A) at (0,0) {};
   \node (B) at (#1pt,0) {};
   \draw [#2] (A) -- (B);
  \end{tikzpicture}
  \hspace{-5pt}
 }
\tikzset{
    position/.style args={#1:#2 from #3}{
        at=(#3.#1), anchor=#1+180, shift=(#1:#2)
    }
}
\newtheorem{theorem}{Theorem} [section]
\newtheorem{metatheorem}[theorem]{``Theorem''}
\newtheorem{lemma}[theorem]{Lemma}
\newtheorem{proposition}[theorem]{Proposition}
\newtheorem{remark}[theorem]{Remark}
\newcommand{\1}{\hspace{0.2mm}\text{I}\hspace{0.2mm}}
\newcommand{\II}{\text{I \hspace{-2.8mm} I} }
\newcommand{\noi}{\noindent}
\newcommand{\Z}{\mathbb{Z}}
\newcommand{\R}{\mathbb{R}}
\newcommand{\T}{\mathbb{T}}
\let\P= \undefined
\newcommand{\P}{\mathbf{P}}
\newcommand{\E}{\mathbb{E}}
\newcommand{\F}{\mathcal{F}}
\newcommand{\al}{\alpha}
\newcommand{\be}{\beta}
\newcommand{\dl}{\delta}
\newcommand{\nb}{\nabla}
\newcommand{\Dl}{\Delta}
\newcommand{\eps}{\varepsilon}
\newcommand{\kk}{\kappa}
\newcommand{\g}{\gamma}
\newcommand{\G}{\Gamma}
\newcommand{\ld}{\lambda}
\newcommand{\s}{\sigma}
\newcommand{\Si}{\Sigma}
\newcommand{\ft}{\widehat}
\newcommand{\Ft}{{\mathcal{F}}}
\newcommand{\wt}{\widetilde}
\newcommand{\cj}{\overline}
\newcommand{\dx}{\partial_x}
\newcommand{\dt}{\partial_t}
\newcommand{\ta}{\theta}
\renewcommand{\l}{\ell}
\renewcommand{\o}{\omega}
\renewcommand{\O}{\Omega}
\newcommand{\les}{\lesssim}
\newcommand{\ges}{\gtrsim}
\newcommand{\jb}[1]
{\langle #1 \rangle}
\newcommand{\ind}{\mathbf 1}
\newcommand{\N}{\mathbb{N}}
\newcommand{\NN}{\mathcal{N}}
\renewcommand{\H}{\mathcal{H}}
\newtheorem*{ackno}{Acknowledgements}
\newcommand{\I}{\mathcal{I}}
\newcommand{\RR}{\mathcal{R}}
\newcommand{\B}{\mathcal{B}}
\newcommand{\D}{\mathcal{D}}
\newcommand{\C}{\mathcal{C}}
\numberwithin{equation}{section}
\numberwithin{theorem}{section}
\begin{document}
\baselineskip = 14pt

\title[Comparing stochastic NLW and NLH]
{Comparing the stochastic nonlinear wave\\
and heat equations: a case study}

\author[T.~Oh and M.~Okamoto]
{Tadahiro Oh and Mamoru Okamoto}
\address{
Tadahiro Oh, School of Mathematics\\
The University of Edinburgh\\
and The Maxwell Institute for the Mathematical Sciences\\
James Clerk Maxwell Building\\
The King's Buildings\\
Peter Guthrie Tait Road\\
Edinburgh\\ 
EH9 3FD\\
 United Kingdom}

\email{hiro.oh@ed.ac.uk}

\address{
Mamoru Okamoto\\
Division of Mathematics and Physics\\
Faculty of Engineering\\
Shinshu University\\
4-17-1 Wakasato\\
Nagano City 380-8553\\
Japan}


\curraddr{Department of Mathematics\\
 Graduate School of Science\\ Osaka University\\
Toyonaka\\ Osaka\\ 560-0043\\ Japan}

\email{okamoto@math.sci.osaka-u.ac.jp}

\subjclass[2010]{35L71, 35K15, 60H15}

\keywords{stochastic nonlinear wave equation; nonlinear wave equation; 
stochastic nonlinear heat equation; nonlinear heat equation; stochastic quantization equation; 
renormalization;  white noise}

\begin{abstract}
We study  the two-dimensional stochastic nonlinear wave equation (SNLW) 
and stochastic nonlinear heat equation (SNLH) with a quadratic nonlinearity, 
forced by a fractional derivative (of order $\al > 0$) of a space-time white noise.
In particular, we show that the well-posedness theory
breaks at $\al = \frac 12$ for SNLW
and at $\al = 1$ for SNLH.
This provides a first example showing that SNLW behaves less favorably than SNLH.
(i)~As for SNLW, 
Deya (2020) essentially proved its local well-posedness 
for $0 < \al < \frac 12$.
We first revisit this argument
and establish  multilinear
smoothing of order $\frac 14$ on the second order stochastic term in the spirit of a recent work by Gubinelli, Koch, and Oh (2018).
This allows us to simplify the local well-posedness
argument 
for some range of~$\al$.
On the other hand, 
when $\al \geq \frac 12$,
we show that 
 SNLW is ill-posed in the sense that 
the second order stochastic term is not a continuous function of time
with values in spatial distributions.
This  shows that a standard method such as the Da Prato-Debussche trick
or its variant,  based on a higher order expansion, breaks down for $\al \ge \frac 12$.
(ii)~As for SNLH, we establish analogous results with a threshold given by $\al = 1$.

These examples show that 
in the case of rough noises,  
 the existing well-posedness theory 
 for singular stochastic PDEs
 breaks down 
 before reaching the critical values
($\al = \frac 34$ in the wave case
and 
 $\al = 2$ in the heat case)
 predicted by 
 the scaling analysis 
 (due to Deng, Nahmod, and Yue (2019) in the wave case
 and due to Hairer (2014) in the heat case).

\end{abstract}


\maketitle

\tableofcontents

\section{Introduction}
\label{SEC:1}

\subsection{Singular stochastic PDEs}

In this paper, we study 
 the following  stochastic nonlinear wave equation (SNLW) on $\T^2 = (\R/2\pi\Z)^2$:
\begin{align}
\begin{cases}
\dt^2 u  + (1 -  \Dl)  u +  u^2  = \jb{\nb}^{\al} \xi\\
(u, \dt u) |_{t = 0} = (u_0, u_1)
\end{cases}
\qquad (x, t) \in \T^2\times \R_+,
\label{SNLW1}
\end{align}

\noi
and the stochastic nonlinear heat equation (SNLH) on $\T^2$:
\begin{align}
\begin{cases}
\dt u + (1 - \Dl) u +  u^2  = \jb{\nb}^\al \xi\\
u|_{t = 0} = u_0
\end{cases}
\qquad (x, t) \in \T^2\times \R_+,
\label{SNLH1}
\end{align}

\noi
where  
$\jb{\nb} = \sqrt{1-\Dl}$ and $\al > 0$.
Namely, both equations are endowed with a quadratic nonlinearity
and forced by an $\al$-derivative of a (Gaussian)  space-time white noise $\xi$
on $\T^2\times \R_+$.

Over the last decade, we have seen a tremendous development
in the study of singular stochastic PDEs, in particular in the parabolic setting
\cite{Hairer0, Hairer, GIP, CC, Kupi, MW1, 
CW,
chandra_analytic_2016, bruned_renormalising_2017,  bruned_algebraic_2016}.
Over the last few years, we have also witnessed a rapid progress
in the theoretical understanding of
 nonlinear wave equations with singular stochastic forcing
 and/or rough random initial data \cite{OTh2, GKO, GKO2, GKOT,  OPTz,ORTz, OOR, OOTz, 
 ORSW, ORW, ORSW2, OOTol, Bring}.
 While the regularity theory in the parabolic setting is well understood, 
 the understanding of the solution theory in the hyperbolic/dispersive setting
 has  been rather poor.  
 This is due to the intricate nature of hyperbolic/dispersive problems, 
 where case-by-case analysis is often necessary
 (for example, to show multilinear smoothing
 as in Proposition~\ref{PROP:sto1} below).
Let us  compare the hyperbolic and parabolic  $\Phi^3_3$-models
 on the three-dimensional torus $\T^3$ as an example.
In the parabolic setting  \cite{EJS}, 
  the standard  Da Prato-Debussche trick suffices
  for local well-posedness, 
  while in the wave setting,  the situation is much more complicated.
  In \cite{GKO2}, 
Gubinelli, Koch, and the first author studied the hyperbolic $\Phi^3_3$-model
by adapting the paracontrolled calculus~\cite{GIP} to the hyperbolic/dispersive setting.
In particular, it was essential to  exploit  multilinear
 smoothing in the construction of stochastic objects and 
 also to introduce paracontrolled operators.
While this comparison on the hyperbolic and parabolic $\Phi^3_3$-model shows that 
it may require more effort to study SNLW than SNLH, 
the resulting outcomes (local well-posedness on $\T^3$ with a quadratic nonlinearity forced by a space-time white noise) are essentially the same.

   The main purpose of this paper is to 
   investigate further  the behavior of solutions 
 to SNLW  and SNLH 
 and study the following question: 
{\it  Does   the solution theory for SNLW  match up with  that for SNLH?}
For this purpose, we study these equations
 in a simpler setting of 
 a quadratic nonlinearity on the two-dimensional torus $\T^2$
 but with noises more singular than  a space-time white noise
 (i.e.~$\al > 0$).
In this setting, we indeed provide a negative answer to the question above.

When $\al = 0$, the equations \eqref{SNLW1} and \eqref{SNLH1}
correspond to the so-called hyperbolic $\Phi^3_2$-model and parabolic $\Phi^3_2$-model,
respectively,\footnote
{Strictly speaking, the hyperbolic $\Phi^3_2$-model  
would require a damping term $\dt u$ in \eqref{SNLW1}.
Since this modification does not change
local well-posedness properties of the equation, 
we simply consider the undamped wave equation \eqref{SNLW1}.}
whose local well-posedness can be obtained by 
the standard Da Prato-Debussche trick; see \cite{DPD2, GKO}.
In this paper, we compare the behavior of solutions to  these equations 
for more singular noises, i.e.~$\al > 0$.
We now state a ``meta''-theorem.

\begin{metatheorem}\label{THM:1}
\textup{(i)} Let $0 <  \al < \frac 12$.
Then, the quadratic SNLW \eqref{SNLW1} is locally well-posed.
When $\al \geq \frac 12$, 
the quadratic SNLW~\eqref{SNLW1} is ill-posed in the sense the 
standard solution theory such as the Da Prato-Debussche trick
or its variant based on a higher order expansion does not work.

\smallskip 
\noi
\textup{(ii)} Let $0 <  \al < 1$.
Then, the quadratic  SNLH \eqref{SNLH1} is locally well-posed.
When $\al \geq 1$, 
the quadratic  SNLH~\eqref{SNLH1} is ill-posed in the sense described above.

\end{metatheorem}

For precise statements, see 
Theorem \ref{THM:WP}, Proposition \ref{PROP:stodiv}, 
Theorem \ref{THM:WP2}, and Proposition~\ref{PROP:heat}.
Let  $\al_* = \frac 12$ for SNLW \eqref{SNLW1} and $ \al_* = 1$
for SNLH \eqref{SNLH1}.
Then, for $0 < \al < \al_*$, 
we prove local well-posedness of the equation via the second order expansion:\footnote{As we see below, 
for small values of $\al$, the first order expansion \eqref{X2} suffices.}
\begin{align}
u = \<1> - \<20> + v. 
\label{X1}
\end{align}

\noi
Here,  we adopt Hairer's convention to denote the stochastic terms by trees;
the vertex ``\,$\<dot>$\,'' in $\<1>$ corresponds to 
the random noise $\jb{\nb}^\al \xi$,
while the edge denotes the Duhamel integral operator:\footnote{In the case
of SNLW, this corresponds to the forward fundamental solution to the linear Klein-Gordon equation:
$\dt^2 u +(1-\Dl) u = 0$.}
 \begin{align*}
 \I = \big(\dt^2 + (1 - \Dl)\big)^{-1}
 \text{ for SNLW} 
 \qquad \text{and}
 \qquad  
\I = \big(\dt + (1 - \Dl)\big)^{-1}
 \text{ for SNLH.}
 \end{align*}
 
 \noi
With this notation, 
the stochastic convolution $\<1>$ 
and the second order stochastic term $\<20>$
can be expressed as
\begin{align}
 \<1> = \I(\jb{\nb}^\al \xi)
\qquad \text{and}
\qquad 
 \<20> = \I (\<2>), 
\label{X1a}
\end{align}

\noi
where  $\<2>$ denotes a renormalized version of $\<1>^2$.
See \eqref{stoc1} and \eqref{stoc1H}
for precise definitions of the stochastic convolutions.
In particular, we impose $\<1>(0) = 0$ in the wave case
and $\<1>(-\infty) = 0$ in the heat case.
We then solve the fixed point problem for the residual term $v
= u - \<1> + \<20>$.
See \eqref{SNLW4} and \eqref{SNLH4}.

On the other hand, for $\al \geq \al_*$
we show that the second order term $\<20>$ does {\it not} belong to 
$C([0, T]; \D'(\T^2))$ for any $T>0$, almost surely
(see Propositions \ref{PROP:stodiv} and \ref{PROP:heat} below).
This implies\footnote
{In some extreme cases, it may be  possible to have $ u \in C([0, T]; \D'(\T^2))$
even if $\<20> \notin C([0, T]; \D'(\T^2))$, namely when the singularities of $\<20>$
and $v$ in \eqref{X1} cancel each other.
We, however, ignore such a ``rare'' case since it is not within the scope of the  standard solution theory,
(where we postulate that $v$ is ``nice'').} 
that a solution $u$ would not belong 
to $C([0, T]; \D'(\T^2))$
if we were to solve the equation via the second (or higher) order expansion \eqref{X1}
or the first order expansion (= the Da Prato-Debussche trick):
\begin{align}
u = \<1> + v
\label{X2}
\end{align}

\noi
since the second order term $\<20>$ appears in 
case-by-case analysis of the nonlinear contribution for the residual term $v = u - \<1>$.

In Subsection \ref{SUBSEC:NLW1},  
we go over details  for SNLW \eqref{SNLW1}.
In Subsection \ref{SUBSEC:heat}, 
we discuss the case of SNLH \eqref{SNLH1}.

\begin{remark}\rm

Our main goal in this paper is to study to what extent
the existing solution theory\footnote{In this paper, we restrict our attention
to the  solution theory based on  the Da Prato-Debussche trick
or its higher order variants.} extends to handle rough noises
in the context of SNLW and SNLH.
For this purpose, we consider the simplest kind
of nonlinearity (i.e.~the quadratic nonlinearity) in \eqref{SNLW1} and~\eqref{SNLH1}.

There are several reasons for considering the ``fractional'' noise $\jb{\nb}^\al \xi$ in \eqref{SNLW1}
and \eqref{SNLH1}.
In studying stochastic PDEs, we often consider a noise
of the form $\Phi \xi$, where $\Phi$ is a bounded operator on $L^2(\T^2)$.
Furthermore, we often assume that  $\Phi$ is Hilbert-Schmidt\footnote{In the Banach space setting, 
we often assume that $\Phi$ is a $\g$-radonifying operator
from $L^2(\T^2)$ to some Banach space $B$.} from $L^2(\T^2)$
to $H^s(\T^2)$. 
See \cite{DPZ14, DD1, OPW}.
It is also common to make  a further assumption
that a noise is spatially homogeneous.
Namely, $\Phi$ is given by a convolution operator.
The Bessel potential $\jb{\nb}^\al$ is one of the simplest 
operator of this kind, which also allows us to tune the (spatial) regularity of the noise.

Since the work \cite{MV}, 
fractional noises have been considered as very natural stochastic perturbation models.
Stochastic PDEs with fractional noises (including $\jb{\nb}^\al \xi$) 
have been  studied by many researchers
(see, for example, \cite{PZ, Dal, TTV, BaTu, DQ, BJQ, Hos, HHLNT, Deya1, Deya2} and the references therein).
In stochastic PDEs, 
the first examples studied in this direction 
are those given by white-in-time fractional-in-space (or colored-in-space) noises
\cite{Walsh, DPZ14, PZ, Dal}.
In view of the close relation of the 
Fourier series representation 
of the noise $\jb{\nb}^\al \xi$ 
and  the fractional-in-space noise
(see Subsection 5.2 in \cite{OST}), 
the models \eqref{SNLW1} and \eqref{SNLH1} provide
good substitutes for 
white-in-time fractional-in-space  noises, 
enabling us to make an essential point
without being bogged down with technical difficulties
related to fractional noises.
See Remark \ref{REM:fBM}
for the case of fractional-in-time (and general fractional) noises.

\end{remark}

\subsection{Stochastic nonlinear wave equation}
\label{SUBSEC:NLW1}

Stochastic nonlinear wave equations 
 have been studied extensively
in various settings; 
see \cite[Chapter 13]{DPZ14} for the references therein.
 In~\cite{GKO},  Gubinelli, Koch, 
and  the first author
considered SNLW  on $\T^2$ with an additive space-time white noise:
\begin{equation}
\label{SNLW1a}
 \dt^2 u + (1- \Dl)  u  + u^k =  \xi, 
\end{equation}

\noi
where $k\geq 2$ is an integer.
The main difficulty of this problem 
comes from the roughness of the space-time white noise.
In particular, 
 the stochastic convolution $\<1>$, 
solving the linear stochastic wave equation:
\begin{equation*}
 \dt^2 \<1> + (1 - \Dl)\<1>   =  \xi, 
\end{equation*}

\noi
is not a classical function but is merely a  distribution
for the spatial dimension $d \geq 2$.
This raises an issue in  making sense of powers $\<1>^k$ and a fortiori of the full nonlinearity 
 $u^k$ in \eqref{SNLW1a}. 
In~\cite{GKO}, by introducing an appropriate time-dependent
renormalization, 
the authors proved local well-posedness of (a renormalized version of) \eqref{SNLW1a} on $\T^2$.
See \cite{GKO2, GKOT, ORTz, OOR, ORSW, ORW, ORSW2}
for further work on SNLW with singular stochastic forcing.
We also  mention the work \cite{Deya1, Deya2} by Deya 
on SNLW with 
more singular (both in space and time) noises on bounded domains in $\R^d$
and the work  \cite{Tolo} on global well-posedness of the cubic SNLW on $\R^2$.

We first state a  local well-posedness result of 
the quadratic SNLW \eqref{SNLW1} on $\T^2$.
Given $N \in \N$, we define the (spatial) frequency projector $\pi_N$  by 
\begin{align}
\pi_N u :=
\sum_{ |n| \leq N}  \ft u (n)  \, e_n,
\label{pi}
\end{align}

\noi
where $\ft u (n)$ denotes the Fourier coefficient of $u$
and $e_n(x) = \frac 1{2\pi}e^{in \cdot x}$ as in \eqref{exp}.
We also set 
 \begin{align}
\H^s(\T^2) = H^s(\T^2)\times H^{s-1}(\T^2).
\label{H1}
 \end{align}

\begin{theorem}\label{THM:WP}
Let  $0<\al<\frac 12$
 and $s>\al$.
 Then, the quadratic SNLW \eqref{SNLW1} on $\T^2$
 is locally well-posed in $\H^s(\T^2)$.
More precisely,  
 there exists a sequence of time-dependent constants $\{\s_N(t)\}_{N\in \N}$
 tending to $\infty$ \textup{(}see \eqref{sigma1} below\textup{)} such that, 
 given 
any  $(u_0, u_1) \in \H^{s}(\T^2)$,  
there exists 
 an almost surely positive stopping time  $T = T(\o)$ such that 
the solution $u_N$ to the following renormalized SNLW with a regularized noise:
\begin{align}
\begin{cases}
\dt^2 u_N + (1-  \Dl)  u_N  +    u_N^2  - \s_N =  \jb{\nb}^\al \pi_N \xi\\
(u_N, \dt u_N)|_{t = 0} = (u_0, u_1)
\end{cases}
\label{SNLW10}
\end{align}

\noi
converges almost surely  to some limiting  process $u \in C([0, T]; H^{-\al -\eps} (\T^2))$
for any $\eps > 0$.

\end{theorem}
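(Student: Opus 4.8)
The plan is to establish local well-posedness by running the second-order Da Prato--Debussche expansion \eqref{X1}, $u_N = \<1>_N - \<20>_N + v_N$, for the regularized renormalized equation \eqref{SNLW10}, and then solving a fixed point problem for the remainder $v_N$; this revisits Deya's scheme, streamlined using the multilinear smoothing of Proposition~\ref{PROP:sto1}. Subtracting from \eqref{SNLW10} the equations satisfied by $\<1>_N$ and $\<20>_N$ and using $\<1>_N^2 - \s_N = \<2>_N$, one finds that $v_N$ solves
\begin{align}
\dt^2 v_N + (1-\Dl) v_N = - v_N^2 - 2\,\<1>_N v_N + 2\,\<20>_N v_N + 2\,\<1>_N \<20>_N - \<20>_N^2
\label{eq:vproposal}
\end{align}
with $(v_N, \dt v_N)|_{t=0} = (u_0, u_1)$ (cf.\ \eqref{SNLW4}). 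For the sub-range $0 < \al < \frac13$ the cheaper first-order expansion \eqref{X2}, $u_N = \<1>_N + v_N$, already suffices — the right-hand side of the $v_N$-equation is then $-\<2>_N - 2\,\<1>_N v_N - v_N^2$ — and this is a special case of the analysis below.

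\textbf{Construction of the stochastic objects.} Put $\<1>_N = \I(\jb{\nb}^\al \pi_N \xi)$ with $\<1>_N(0) = 0$, let $\s_N(t) = \E\big[\<1>_N(t,x)^2\big]$ (independent of $x$, cf.\ \eqref{sigma1}), and set $\<2>_N = \<1>_N^2 - \s_N$, $\<20>_N = \I(\<2>_N)$. From the explicit Fourier series of $\<1>_N$, Nelson's hypercontractivity on fixed Wiener chaoses, and Kolmogorov's continuity criterion in $t$, one shows $\s_N(t) \to \infty$ and, almost surely: $\<1>_N \to \<1>$ in $C([0,T]; W^{-\al-\eps,\infty}(\T^2))$, $\<2>_N \to \<2>$ in $C([0,T]; W^{-2\al-\eps,\infty}(\T^2))$, and — invoking the dispersive smoothing of Proposition~\ref{PROP:sto1} — $\<20>_N \to \<20>$ in $C([0,T]; H^{\b}(\T^2))$ with $\b = \tfrac54 - 2\al - \eps$, an improvement of $\tfrac14$ over the plain Schauder exponent $1 - 2\al$; in particular $\b > 0$ whenever $\al < \tfrac12$. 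When $\al$ is close to $\tfrac12$ one also records the convergence of the degree-three object $\<1>_N \<20>_N$, which, pairing Wiener chaoses of distinct homogeneity, needs no further renormalization and is again controlled by Proposition~\ref{PROP:sto1}. All these bounds are uniform in $N$ on a fixed interval $[0,T_0]$.

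\textbf{Fixed point for the remainder.} Rewrite \eqref{eq:vproposal} as the Duhamel fixed point $v_N = \G_N(v_N)$, where $\G_N(w)$ is the sum of the linear evolution $\cos(t\jb{\nb}) u_0 + \tfrac{\sin(t\jb{\nb})}{\jb{\nb}} u_1$ and $\I\big(-w^2 - 2\,\<1>_N w + 2\,\<20>_N w + 2\,\<1>_N\<20>_N - \<20>_N^2\big)$. Fix $\sigma$ with $\al < \sigma < \min(s,\, 1-\al) - \eps$, an interval that is nonempty precisely because $\al < \tfrac12$ and $s > \al$. Using that the wave Duhamel operator gains one spatial derivative and a positive power of $T$, namely $\|\I F\|_{C_T\H^{\tau+1}} \les T \|F\|_{C_T H^{\tau}}$, together with the bilinear estimates $\|w^2\|_{H^{2\sigma-1}} \les \|w\|_{H^\sigma}^2$, $\|\<1>_N w\|_{H^{-\al-\eps}} \les \|\<1>_N\|_{W^{-\al-\eps,\infty}}\|w\|_{H^\sigma}$ (valid since $\sigma > \al$: both paraproducts and the resonant term land at regularity $\ge -\al - \eps$), $\|\<20>_N w\|_{H^{\b+\sigma-1}} \les \|\<20>_N\|_{H^\b}\|w\|_{H^\sigma}$, $\|\<20>_N^2\|_{H^{2\b-1}} \les \|\<20>_N\|_{H^\b}^2$, and the bound on $\<1>_N\<20>_N$ above, one verifies that $\G_N$ maps the ball of $C([0,T]; \H^\sigma)$ of radius $\sim 1 + \|(u_0,u_1)\|_{\H^s}$ into itself and is a contraction, for $T = T(\o) > 0$ small depending on the ($N$-uniform) sizes of the stochastic objects; the exponent conditions used are $1-\al-\eps \ge \sigma$, $\b + \sigma \ge \sigma$, $2\b \ge \sigma$ (where the full strength $\b = \tfrac54 - 2\al$ of Proposition~\ref{PROP:sto1} enters for $\al$ near $\tfrac12$), $2\sigma \ge \sigma$, and $\sigma \le s$. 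The fixed point gives $v_N \in C([0,T]; \H^\sigma)$ with $N$-uniform bounds; the same estimates on differences give $v_N \to v$ in $C([0,T]; \H^\sigma)$, and hence $u_N = \<1>_N - \<20>_N + v_N \to u := \<1> - \<20> + v$ in $C([0,T]; H^{-\al-\eps}(\T^2))$, the limiting regularity being that of $\<1>$ since $\<20>$ and $v$ are smoother. This gives the theorem.

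\textbf{Expected main obstacle.} The two genuinely delicate points — and the reason the threshold is $\al = \tfrac12$ — are: (i) the low-regularity product $\<1>\,v$, which forces $\sigma > \al$ and, because $\I$ gains only one spatial derivative, admits a self-consistent choice of $\sigma$ only when $\al < \tfrac12$; and (ii) making sense of and estimating the stochastic terms $\<20>$, $\<20>^2$, $\<1>\<20>$ when $\al$ is close to $\tfrac12$, where deterministic product and Schauder bounds fall short and one must extract the dispersive multilinear smoothing of Proposition~\ref{PROP:sto1} — and, in the most extreme range, build $\<1>\<20>$ directly as a stochastic object via oscillatory-sum estimates over frequencies. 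This case-by-case dispersive bookkeeping, which has no parabolic counterpart, is the real substance of the proof.
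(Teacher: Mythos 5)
Your proposal is correct and follows essentially the same route as the paper: the second order expansion $u_N = \<1>_N - \<20>_N + v_N$ (reducing to the first order expansion for small $\al$), the multilinear smoothing of Proposition~\ref{PROP:sto1} for $\<20>$, and a contraction argument via the energy estimate with $\al < \s < 1-\al$, which is exactly the paper's combination of Theorems~\ref{THM:LWP} and~\ref{THM:LWPv}. The one point you understate is the product $\<1>\<20>$ for $\frac{5}{12} \le \al < \frac 12$: its resonant part $\<21p> = \<20> \pe \<1>$ is \emph{not} controlled by Proposition~\ref{PROP:sto1} together with deterministic paraproduct estimates (the sum of regularities $(s_\al - ) + (-\al-)$ is negative there), but requires its own second-moment, oscillatory-sum estimate --- this is Proposition~\ref{PROP:sto2} in the paper --- as you correctly anticipate only in your closing paragraph.
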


In \cite{Deya2}, Deya 
 proved
Theorem \ref{THM:WP}  on bounded domains on $\R^2$
but the same proof essentially applies on $\T^2$.\footnote{One may  invoke the finite speed of propagation
and directly apply the result in \cite{Deya2} to $\T^2$.
We also point out that the paper \cite{Deya2} handles noises with rougher temporal regularity than the space-time white noise and Theorem~\ref{THM:WP} is a subcase of the main result in \cite{Deya2}.}
For $0 < \al < \frac 13$, 
the standard Da Prato-Debussche argument suffices to prove Theorem \ref{THM:WP}.
Indeed, with the first order expansion \eqref{X2}, 
the residual term $v = u - \<1>$ satisfies
\begin{align}
\begin{split}
\dt^2 v  + (1 -  \Dl) v
&=  -(v+\<1>)^2 \\
&= -v^2 - 2 v \<1> - \<2>.
\end{split}
\label{SNLW2}
\end{align}

\noi
At the second equality,  we performed the Wick renormalization: $\<1>^2 \rightsquigarrow \<2>$.
It is easy to see that  $\<1>$ and $\<2>$ have regularities\footnote{In the following, we restrict our attention to spatial regularities.
Moreover,  we use $a-$ 
(and $a+$) to denote $a- \eps$ (and $a+ \eps$, respectively)
for arbitrarily small $\eps > 0$.
If this notation appears in an estimate, 
then  an implicit constant 
is allowed to depend on $\eps> 0$ (and it usually diverges as $\eps \to 0$).}
$-\al -$
and $- 2\al - $, respectively (see Lemma~\ref{LEM:stoconv} below). 
Then, thanks to one degree of smoothing from the wave Duhamel integral operator, 
we expect that $v$ has regularity $1 - 2\al -$.
The restriction $\al < \frac 13$ 
appears from $(1 - 2\al-) + (-\al-) > 0$ in  making sense of the product 
$v \<1>$ in \eqref{SNLW2}.\footnote{Recall that a product
of two functions is defined in general if the sum of the regularities is positive.}
Then, by viewing
\[(u_0, u_1, \<1>, \<2>)\] 

\noi
as a given {\it enhanced data set},\footnote{Namely,  
once we have the pathwise regularity property of the stochastic terms $\<1>$ and $\<2>$, 
we can build a continuous solution map:
$(u_0, u_1, \<1>, \<2>) \mapsto v$ in the deterministic manner.}
one can easily prove local well-posedness of \eqref{SNLW2}.

For $\frac 13 \leq \al < \frac 12$, 
the argument in \cite{Deya2} is based on the second order expansion \eqref{X1}.
In this case, 
the residual term $v = u - \<1> + \<20>$ satisfies
\begin{align}
\begin{split}
\dt^2 v +(1 -  \Dl)  v 
&  =  - (v + \<1> - \<20>)^2 + \<2> \\
& = -  (v  - \<20>)^2 -2 v \<1> + 2 \<20>\<1>.
\end{split}
\label{SNLW4}
\end{align}

\noi
If we proceed with a ``parabolic thinking'',\footnote{Namely, 
if we only count the regularity of each of $\<1>$ in $\<2>$
and put them together with one degree of smoothing
from the wave Duhamel integral operator  {\it without} taking into account the product structure
and the oscillatory nature of the linear wave propagator.}
then we expect that  $\<20>$ has
regularity 
\begin{align*}
1- 2\al - = 2(-\al -) + 1, 
\end{align*}

\noi
where we gain one derivative from the wave Duhamel integral operator;
see \eqref{stoc2}.
With this parabolic thinking, we see that 
the last product 
$ \<20>\<1>$ in \eqref{SNLW4} makes sense (in a deterministic manner)
only for $\al < \frac 13$ so that $(1 - 2\al - ) + (-\al-) > 0$.
Nonetheless, for $\frac 13 < \al < \frac 12$, 
one can use stochastic analysis
to give a meaning to 
$\<21> := \<20> \cdot \<1>$ 
as a random distribution of regularity $- \al -$
(inheriting the bad regularity of $\<1>$).
Using the equation~\eqref{SNLW4}, 
we expect that $v$ has regularity $1 - \al - $ and, with this regularity of $v$, 
 all the terms on the right-hand side of \eqref{SNLW4} make sense.
Then, by viewing
\begin{align}
 \big(u_0, u_1, \<1>, \<20>,  \<21>\big)
\label{data1}
\end{align}

\noi
as a given enhanced data set, 
a standard contraction argument with the energy estimate 
(Lemma \ref{LEM:energy}) yields  local well-posedness of \eqref{SNLW4}.

In view of ``Theorem'' \ref{THM:1}, 
 the restriction $\al < \frac 12$ in Theorem \ref{THM:WP} is sharp.
See Proposition~\ref{PROP:stodiv} below.
There is, however, 
one point that we would like to investigate in this well-posedness part.
In the discussion above, 
we simply used a ``parabolic thinking''
to conclude that  $\<20>$ has regularity (at least) $1 - 2\al-$.
In fact, by exploiting the explicit product structure
and multilinear dispersion, we show that there is an extra smoothing for $\<20>$.

Given $N \in \N$, 
let $\<20>_N$ to denote the second order term, emanating from the truncated noise $\pi_N \jb{\nb}^\al \xi$.
See \eqref{stoc3} for a precise definition.
We then have the following proposition.

\begin{proposition} \label{PROP:sto1}
Let $0<\al < \frac 12$ and 
 $s \in \R$ satisfy
\begin{align}
s < s_\al
:= 1-2\al + \min \big( \al,\tfrac 14 \big)
=
\begin{cases}
1-\al, & \text{if } \al \le \frac 14, \\
\frac 54 -2\al, & \text{if } \al>\frac 14. 
\end{cases}
\label{exreg}
\end{align}

\noi
Then, for any $T>0$,  $\{ \<20>_N \}_{N \in \N}$ is
 a Cauchy sequence
in 
$C([0,T];W^{s,\infty} (\T^2))$ almost surely.
In particular,
denoting the limit by $\<20>$,
we have
  \[\<20> \in C([0,T];W^{s_\al - \eps,\infty} (\T^2))
  \]
  
  \noi
  for any $\eps > 0$, 
  almost surely.
\end{proposition}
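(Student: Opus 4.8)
The plan is to treat $\<20>_N = \I(\<2>_N)$ as an element of the second homogeneous Wiener chaos generated by $\xi$, to reduce the claimed $W^{s,\infty}$-regularity to a second-moment estimate on a single spatial Fourier coefficient, and then to conclude via Littlewood--Paley theory, Bernstein's inequality, Wiener chaos hypercontractivity, and Kolmogorov's continuity criterion. The crucial point is that the full gain of $\min(\al,\tfrac14)$ over the ``parabolic'' exponent $1-2\al$ must be extracted in the second-moment estimate, by using the explicit product structure of $\<2>_N$ together with the oscillation of the wave Duhamel integral operator, in the spirit of \cite{GKO}.

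Concretely, I would first write, for $n \neq 0$,
\begin{align*}
\widehat{\<20>_N}(n,t) = \int_0^t \frac{\sin\big((t-t')\jb n\big)}{\jb n}\,\widehat{\<2>_N}(n,t')\,dt',
\qquad
\widehat{\<2>_N}(n,t') = \sum_{n = n_1 + n_2}\widehat{\<1>_N}(n_1,t')\,\widehat{\<1>_N}(n_2,t'),
\end{align*}
the latter belonging to the second chaos, and then apply Wick's theorem to get, uniformly in $N$ and $t \in [0,T]$,
\begin{align*}
\E\big[\,|\widehat{\<20>_N}(n,t)|^2\,\big]
= \frac{2}{\jb n^2}\sum_{n = n_1 + n_2}\int_0^t\!\!\int_0^t \sin\big((t-t')\jb n\big)\sin\big((t-t'')\jb n\big)\,\G_{n_1}(t',t'')\,\G_{n_2}(t',t'')\,dt'\,dt'',
\end{align*}
where $\G_m(t',t'') := \E\big[\widehat{\<1>}(m,t')\,\overline{\widehat{\<1>}(m,t'')}\,\big] = \jb m^{2\al - 2}\int_0^{\min(t',t'')}\sin\big((t'-\tau)\jb m\big)\sin\big((t''-\tau)\jb m\big)\,d\tau$. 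Expanding every product of sines by the product-to-sum identity and integrating first in $\tau$, then in $(t',t'')$, each resulting term picks up a time factor which is $O\big(\jb\Phi^{-1}\big)$ on the non-resonant region and $O_T(1)$ on the near-resonant region, where $\Phi$ ranges over the combinations $\pm\jb n \pm \jb{n_1} \pm \jb{n_2}$ (combinations with a coefficient $2$ are harmless, since each bracket is $\geq 1$). Collecting the contributions, this leads to
\begin{align*}
\E\big[\,|\widehat{\<20>_N}(n,t)|^2\,\big]
\;\les\; \frac{1}{\jb n^2}\sum_{n = n_1 + n_2}\frac{\jb{n_1}^{2\al - 2}\,\jb{n_2}^{2\al - 2}}{\jb\Phi}\,,
\end{align*}
uniformly in $N$ and $t \in [0,T]$; running the same computation for the time increment, with the extra factor $\jb n^{\ta}|t - t'|^{\ta}$ coming from $\big|\sin((t-\cdot)\jb n) - \sin((t'-\cdot)\jb n)\big| \les \jb n^{\ta}|t - t'|^{\ta}$, $\ta \in [0,1]$, yields the same bound times $|t-t'|^{2\ta}\jb n^{2\ta}$.

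The heart of the proof is the dispersive lattice-point estimate that sums the last display: simply discarding $\jb\Phi^{-1}$ recovers only $\E[|\widehat{\<20>_N}(n,t)|^2] \les \jb n^{4\al - 4}$, i.e.\ the parabolic exponent $1 - 2\al$. To do better, I would dyadically decompose in $\jb{n_1}, \jb{n_2}$ and in $\jb\Phi$, and use that, by convexity, the near-resonant set $\{ n_1 \in \Z^2 : |n_1|,\, |n - n_1| \les \jb n,\ |\Phi| \les M \}$ is essentially a thin ellipse with foci $0$ and $n$, hence contains only $\les \jb n^{3/2}(1 + M)^{1/2}$ lattice points in place of the trivial $\les \jb n^2$. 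Combining this count with the singular weights $\jb{n_j}^{2\al - 2}$ — whose $\ell^1$-mass is concentrated at low frequency when $\al \leq \tfrac14$ but spread out when $\al > \tfrac14$ — produces $\E[|\widehat{\<20>_N}(n,t)|^2] \les \jb n^{4\al - 4}\,\jb n^{-2\min(\al,1/4) +}$: the binding interaction is ``one low and one high'' frequency when $\al \leq \tfrac14$ (gain $\al$), and ``three comparable'' frequencies when $\al > \tfrac14$ (the resonant-area exponent $\tfrac32$ versus $2$ costing a factor $\tfrac12$ when converting $L^2(\O)$ into a regularity exponent, hence gain $\tfrac14$). I expect this weighted ellipsoidal lattice count, together with the associated case distinctions, to be the only genuinely delicate step; in particular one must also check that the intermediate frequency regimes do not beat these two.

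The remainder is routine. Decomposing $\<20>_N = \sum_K P_K \<20>_N$ into Littlewood--Paley pieces, Bernstein's inequality combined with hypercontractivity on the second chaos and spatial stationarity gives, for $2 \leq p < \infty$,
\begin{align*}
\Big\|\,\|P_K \<20>_N(t)\|_{L^\infty_x}\,\Big\|_{L^p(\O)}
\;\les_p\; 2^{2K/p}\Big(\sum_{|n| \sim 2^K}\E\big[\,|\widehat{\<20>_N}(n,t)|^2\,\big]\Big)^{1/2}
\;\les\; 2^{K(2/p - s_\al + \eps)},
\end{align*}
and summing the dyadic bounds (through the embedding of a suitable Besov space into $W^{s,\infty}$) and letting $p \to \infty$ places $\<20>_N(t)$ in $W^{s,\infty}(\T^2)$ for every $s < s_\al$, with $L^p(\O)$-moments bounded uniformly in $N$ and $t \in [0,T]$; feeding the time-increment bound into Kolmogorov's criterion upgrades this to $\<20>_N \in C([0,T];W^{s,\infty}(\T^2))$ almost surely. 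Finally, carrying out the same moment computation for $\<20>_N - \<20>_M$ with $M < N$: since the Fourier support of the difference forces at least one of $n, n_1, n_2$ to exceed $M$, one may extract a factor $M^{-2\dl}$ at the expense of replacing a single weight $\jb{n_j}^{2\al - 2}$ by $\jb{n_j}^{2\al - 2 + 2\dl}$, which is still summable since $s < s_\al$ leaves room; this gives $\E\big[\|\<20>_N - \<20>_M\|_{C([0,T];W^{s,\infty})}^p\big] \les M^{-\dl p}$ for some $\dl > 0$, and a Borel--Cantelli argument over dyadic $M,N$ shows $\{\<20>_N\}$ is Cauchy in $C([0,T];W^{s,\infty}(\T^2))$ almost surely. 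Since this holds simultaneously for all $s < s_\al$, the limit $\<20>$ lies in $C([0,T];W^{s_\al - \eps,\infty}(\T^2))$ for every $\eps > 0$, as claimed.
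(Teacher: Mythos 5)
Your proposal is correct and follows essentially the same route as the paper's proof: Wick's theorem reduces everything to the second-moment bound $\E\big[|\ft{\<20>}_N(n,t)|^2\big]\les_T\jb{n}^{-2-2s_\al+}$, a single integration by parts in time produces the factor $(1+|\pm\jb{n}\pm\jb{n_1}\pm\jb{n_2}|)^{-1}$, and the gain of $\min(\al,\tfrac14)$ comes from a weighted count of near-resonant frequency pairs; the paper merely packages your Littlewood--Paley/Bernstein/hypercontractivity/Kolmogorov step into Lemma~\ref{LEM:reg} and organizes the count by the angle $\ta=\angle(n_2,-n)$ (a trapezoid of $\les 1+N_2^2\ta$ lattice points combined with $1+|\kk_1(\bar n)|\ges|\kk_1(\bar n)|^{1/2}$) rather than by dyadic level sets of the resonance function, which is equivalent bookkeeping. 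The one step to tighten is the lattice count itself: as stated it covers only the elliptic sign combination restricted to $|n_1|,|n-n_1|\les\jb{n}$, whereas the binding combination $\jb{n}+\jb{n_2}-\jb{n_1}$ has hyperbolic level sets and must also be counted in the regime $|n_1|\sim|n_2|\gg|n|$, where the analogous bound $\les 1+N_2^{3/2}N_1^{1/2}(1+M)^{1/2}\jb{n}^{-1/2}$ still closes (this is the second sum in \eqref{SS4}), consistent with your own remark that the remaining frequency regimes must be checked.
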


See also Proposition \ref{PROP:sto2} below
for another instance of multilinear smoothing.
In \cite{GKO2}, such an extra smoothing property 
on stochastic terms via multilinear dispersion effect 
played an essential role in the study of the quadratic SNLW on the three-dimensional torus $\T^3$.
We believe that the multilinear smoothing  in
Proposition \ref{PROP:sto1}
is itself of interest since such a multilinear smoothing
in the stochastic context for the wave equation
 is not well understood.
See also Remark \ref{REM:smooth} below.

In our current setting, 
this extra smoothing does {\it not} improve  the range of $\al$ in Theorem~\ref{THM:WP}
since, as we will show below, the range $\al < \frac 12$ is sharp.
Proposition~\ref{PROP:sto1}, however, allows
us to 
simplify the local well-posedness argument for the range $\frac 13\leq \al < \frac 5{12}$. 
While the discussion above showed the  Da Prato-Debussche
argument to study \eqref{SNLW2} breaks down at $\al = \frac 13$, 
the extra smoothing  in Proposition \ref{PROP:sto1}
allows us to study \eqref{SNLW2} 
at the level of the Duhamel formulation:
\begin{align}
\begin{split}
 v
&= S(t)(u_0, u_1) -\I(v^2 + 2 v \<1>)  - \I(\<2>)\\
&= S(t)(u_0, u_1) -\I(v^2 + 2 v \<1>)  - \<20>, 
\end{split}
\label{SNLW5}
\end{align}

\noi
where $S(t)$ denotes the linear wave propagator
defined in \eqref{LW3a}.
Thanks to Proposition~\ref{PROP:sto1}, 
we expect that $v$ has regularity $\frac 54 - 2\al - $,
thus allowing us to make
sense of the product 
$ v \<1>$ as long as $\frac 13 \leq \al < \frac 5{12}$, 
i.e.~$(\frac 54 - 2\al -) + (-\al-) > 0$.
In this refined Da Prato-Debussche argument, 
the relevant enhanced data set is given by 
\begin{align}
 \big(u_0, u_1, \<1>, \<20>\big).
\label{data2}
\end{align}

\noi
See Theorem~\ref{THM:LWP} for a precise statement.

Alternatively, we may work with the second order expansion \eqref{X1}
and study the equation~\eqref{SNLW4}.
In this case, Proposition \ref{PROP:sto1}
allows us to 
make sense of the product $\<20>\<1>$
in the {\it deterministic} manner for $\al < \frac 5{12}$.
This in particular shows that for the range $\frac 13\leq \al < \frac 5{12}$, 
we can solve \eqref{SNLW4} for $v = u - \<1> + \<20>$
with a smaller enhanced data set  in \eqref{data2}.
Namely, 
when $\al < \frac 5 {12}$, 
there is no need to a priori prescribe the last term $\<21>$ in \eqref{data1}.
See Theorem~\ref{THM:LWPv}\,(i) for a precise statement.

For the range of $\al$ under consideration, i.e.~$\al \geq   \frac 13$, 
the extra gain of regularity in Proposition~\ref{PROP:sto1}
is  $\frac 14$, regardless of the value of $\al$.
When $\frac 5{12} \leq \al < \frac 12$, this extra smoothing is unfortunately not sufficient to 
make sense of the product $\<20>\<1>$ in the deterministic manner.
Recalling the paraproduct decomposition (see \eqref{para1} below), 
we see that the resonant product $\<21p> := \<20>\pe \<1>$
is the only issue here.
Thus, for $\frac 5{12} \leq \al < \frac 12$, 
we solve \eqref{SNLW4} 
with an enhanced data set:
\begin{align*}
 \big(u_0, u_1, \<1>, \<20>, \<21p>\big),
\end{align*}

\noi
where
 we use stochastic analysis
to give a meaning to the problematic resonant product $\<21p>$;
see Proposition \ref{PROP:sto2}.

\begin{remark}\label{REM:smooth}\rm

Note that the extra smoothing is at most $\frac 14$ in Proposition \ref{PROP:sto1}, while a $\frac 12$-smoothing was shown on $\T^3$ in \cite{GKO2}.
This $\frac 14$-difference in two- and three-dimensions 
seems to come from the effect of  Lorentz transformations along null directions.
The same situation appears in bilinear estimates for solutions to the linear wave equation;
see,  for example, Subsection 3.6 in \cite{DFS}.
See also Remark \ref{REM:exsmooth} for a further discussion, 
where (i) we show that our computation on $\T^2$ is essentially sharp
and (ii) we compute the maximum possible gain of regularity on $\T^d$, $d \geq 3$.
Lastly, we point out that 
 Proposition \ref{PROP:sto1} states that the extra smoothing vanishes as $\al \to 0$.
\end{remark}

Next, let us consider the situation for $\al \geq \frac 12$.
In \cite[Proposition 1.4]{Deya2}, 
Deya  showed that 
$\E \big[ \|\<2>_N(t)\|_{H^s}^2 \big]$ diverges for any $s \in \R$, 
when $\al \ge \frac 12$.
This can be used to show that 
the Wick power $\<2>$ is not a distribution-valued function of time
when $\al \geq \frac 12$.
The following proposition shows that the same result holds for 
$\<20>$.

\begin{proposition} \label{PROP:stodiv}
Let $\al \ge \frac 12$.
Then,  given any $T> 0$,  $\{ \<20>_N \}_{N \in \N}$
forms a divergent sequence in $C([0,T]; \mathcal{D}'(\T^2))$ almost surely.
\end{proposition}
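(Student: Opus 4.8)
The plan is to compute the second moment of the $n$-th Fourier coefficient of $\<20>_N(t)$ and show that, for a fixed nonzero frequency $n$ (say $n = n_0$ with a convenient choice such as $|n_0| = 1$), this variance diverges as $N \to \infty$ for every fixed $t > 0$, in a way that forces divergence in $\D'(\T^2)$. Recall from \eqref{X1a} and \eqref{stoc3} that $\<20>_N = \I(\<2>_N)$ where $\<2>_N$ is the Wick square of the stochastic convolution $\<1>_N = \I(\jb{\nb}^\al \pi_N \xi)$; in Fourier, $\<2>_N$ is a sum over pairs of frequencies $n_1 + n_2 = n$ of products of the relevant Duhamel kernels against white noise, and Wick-renormalization removes the diagonal $n_1 = -n_2$ contribution. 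First I would write the explicit formula for $\widehat{\<20>_N}(n,t)$ as a double stochastic integral in time against the space-time white noise restricted to frequencies $|n_1|,|n_2|\le N$, using the explicit wave Duhamel kernel (the $\frac{\sin((t-t')\jb{n}))}{\jb{n}}$ factor) composed once more with itself for the outer $\I$.

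Second, by Itô isometry (or the Wiener–Itô isometry for the second chaos), the variance $\E\big[|\widehat{\<20>_N}(n_0,t)|^2\big]$ becomes a deterministic sum over $n_1 + n_2 = n_0$, $|n_1|,|n_2| \le N$, $n_1 \neq -n_2$, of a nonnegative quantity of the form $\jb{n_1}^{2\al}\jb{n_2}^{2\al}$ times an absolute square of an iterated time integral of products of sines divided by $\jb{n_1}\jb{n_2}\jb{n_0}$-type weights. The key point is that each summand is nonnegative, so it suffices to bound the sum from below by restricting to a favorable region of $(n_1, n_2)$. The parabolic-counting heuristic predicts regularity $1 - 2\al$ for $\<20>$, meaning the generic summand behaves like $\jb{n_1}^{2\al - 2}\jb{n_2}^{2\al - 2}$ up to the time-integral oscillation; when $\al \ge \frac 12$ the exponents $2\al - 2 \ge -1$ are not summable enough, so one expects $\sum_{n_1 + n_2 = n_0,\ |n_j|\le N}$ of such terms to diverge like a power of $N$ (or logarithmically at $\al = \frac 12$). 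Concretely, I would isolate the region where $n_1$ and $n_2$ are both of size $\sim M$ for $1 \ll M \le N$ and roughly antiparallel so that $n_0 = n_1 + n_2$ stays bounded; on a positive-measure (in the lattice-counting sense) subset of such configurations the oscillatory time integral does not produce cancellation and is bounded below by a constant times $\jb{M}^{-2}$ (one genuinely has to check the sines do not conspire to vanish — averaging over $t$ in a small interval, or using a lower bound valid for all $t$ in a fixed range, handles this). Summing the resulting $\sim \jb{M}^{2\al-2}\cdot\jb{M}^{-2}\cdot\jb{M}^{\,?}$ over the $\sim M$ available frequencies in each dyadic shell and over dyadic $M \lesssim N$ then yields divergence as $N\to\infty$.

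Third, to upgrade the divergence of a single Fourier mode to divergence in $C([0,T];\D'(\T^2))$ almost surely, I would argue as follows. Fix $\varphi \in C^\infty(\T^2)$ with $\widehat\varphi(n_0) \neq 0$. The pairing $\langle \<20>_N(t), \varphi\rangle$ is, for each fixed $t$, an element of the (finite, here second) Wiener chaos whose variance is bounded below by $|\widehat\varphi(n_0)|^2 \E[|\widehat{\<20>_N}(n_0,t)|^2] \to \infty$. By hypercontractivity on a fixed Wiener chaos, $L^2$-divergence of the variance forces $\|\langle \<20>_N(t),\varphi\rangle\|_{L^p(\O)} \to \infty$ for every $p < \infty$, and in particular the sequence cannot converge in probability, hence cannot have an almost surely convergent subsequence in $\D'$; a Borel–Cantelli / zero–one law argument (the event that $\{\<20>_N\}$ converges in $C([0,T];\D'(\T^2))$ is measurable and, being in the tail $\sigma$-algebra of the independent Fourier modes of the noise, has probability $0$ or $1$, and the variance computation rules out probability $1$) then gives almost sure divergence. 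I would phrase the final statement as: for a.e.\ $\o$, $\langle \<20>_N(t_0),\varphi\rangle$ does not converge, uniformly in $N$ along no subsequence, hence $\{\<20>_N\}$ is divergent in $C([0,T];\D'(\T^2))$.

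The main obstacle is the second step: the lower bound on the time-integral factor. Unlike the parabolic case where the analogous kernel is a clean exponential and positivity is transparent, the wave Duhamel kernel involves $\sin$ factors, so the iterated time integral $\int_0^t \int_0^{t'} \cdots\, dt'' dt'$ of a product of sinusoids in $(t-t')\jb{n_0}$, $(t'-t'')\jb{n_1}$, $(t'-t'')\jb{n_2}$ can in principle be small or vanish for isolated $t$. The resolution is to show that after summing over the relevant lattice region (or over a fixed small $t$-interval) the modulus squared of this integral is $\gtrsim \jb{M}^{-2}$ on a positive proportion of the configurations — exploiting that the phases $\jb{n_1} \pm \jb{n_2}$ cannot all be near a resonance for a density-one set of antiparallel $(n_1,n_2)$ — which is precisely the kind of stationary/non-stationary phase bookkeeping that also underlies Proposition \ref{PROP:sto1}. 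I expect the clean statement to be a divergence rate of the form $\E[|\widehat{\<20>_N}(n_0,t)|^2] \gtrsim N^{4\al - 2}$ for $\al > \frac 12$ and $\gtrsim \log N$ at $\al = \frac 12$.
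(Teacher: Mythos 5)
Your overall architecture coincides with the paper's: fix a single frequency $n$, show that $\E\big[|\ft{\<20>}_N(n,t)|^2\big]$ diverges like $N^{4\al-2}$ (or $\log N$ at $\al=\frac 12$) by isolating the high-to-low regime $|n_1|\sim|n_2|\gg|n|$, and upgrade to almost sure divergence using the independence of the contributions from distinct unordered pairs $\{k,n-k\}$ together with a zero--one law. (The paper invokes Kolmogorov's three-series theorem plus the zero--one law where you propose hypercontractivity on the second chaos plus the zero--one law; both routes are legitimate, and your predicted divergence rate is exactly the one obtained in the paper.)

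The one genuine gap is the step you yourself flag: the lower bound on the iterated oscillatory time integral. Your proposed fixes (averaging over $t$, or a density-one count of favorable antiparallel configurations) are not what is needed and would complicate matters --- averaging over $t$ does not by itself yield divergence at a fixed time, which is what divergence in $C([0,T];\mathcal{D}'(\T^2))$ requires, and no counting of ``good'' lattice configurations is necessary. The paper's resolution is more elementary than the stationary-phase bookkeeping you anticipate: one writes $\s_k(t_1,t_2)\s_{n-k}(t_1,t_2)$ via the product-to-sum formula as a term oscillating at the slow frequency $\jb{k}-\jb{n-k}=O(|n|)$ plus a term oscillating at the fast frequency $\jb{k}+\jb{n-k}$, plus lower-order errors (see \eqref{div20}). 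Choosing $t$ with $t\jb{n}\ll 1$ --- permissible because a single time $t\in(0,T]$ suffices --- the slow cosine is $\ges 1$ throughout $[0,t]$ and each factor $\sin((t-t_j)\jb{n})/\jb{n}$ is $\ges t-t_j>0$, so the slow contribution is pointwise nonnegative and bounded below by $t^6\jb{k}^{-4(1-\al)}$ with no cancellation to rule out; a single integration by parts bounds the fast contribution by $\jb{n}^{-2}\jb{k}^{-5+4\al}$, which is dominated once $|k|\gg t^{-6}\jb{n}^{-2}$. This closes the gap you identified and is the only substantive ingredient missing from your sketch.
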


We point that Proposition \ref{PROP:stodiv} is by no means to be expected
from the bad behavior of $\<2>$ for $\al \geq \frac 12$.
For example,
in the parabolic $\Phi^4_3$-model, 
it is well known that the cubic Wick power $\<3>$ does not make sense as a distribution-valued function of time but that $\<30> = (\dt - \Dl)^{-1} \<3>$ belongs
to $C(\R; \C^{\frac 12-}(\T^3))$; see \cite{EJS, MWX}.\footnote{A more fundamental example
of this kind may be  the space-time white noise $\xi$ which does not make sense as a distribution-valued function of time,
while we can define the stochastic convolution $\I (\xi)$ as a distribution-valued
function by a limiting procedure.}
Furthermore, in Proposition \ref{PROP:heat} below, 
we prove that, for the quadratic SNLH \eqref{SNLH1},  
(i)~the Wick power $\<2>$ is not a distribution-valued function for $\al \geq \frac 12$
but (ii)~$\<20>$ in the heat case makes sense as a distribution-valued  function for $\al < 1$.
Therefore, we find it rather intriguing
that for the wave equation, 
both $\<2>$ and $\<20>$
have the same threshold $\al = \frac 12$.

In the proof of Proposition \ref{PROP:stodiv}, 
we  show that each Fourier coefficient $\ft{\<20>}_N(n,t)$  diverges almost surely for $\al \ge \frac 12$.
See Remark~\ref{REM:divcov}.
This divergence comes from the high-to-low energy transfer.
Namely, the divergence comes from 
the nonlinear interaction of two incoming high-frequency waves
resulting in a low-frequency wave.\footnote{In \eqref{div2x}, 
this corresponds to the interaction of two functions 
of spatial frequencies $k$ and $n-k$
giving rise to an output function 
of spatial frequency $n$ with 
$|n| \ll |k| \sim |n-k| $.}
Such high-to-low energy transfer was exploited in 
proving  ill-posedness of the deterministic nonlinear wave equations in negative Sobolev spaces; see \cite{CCT2b, OOTz, FOk}.

\begin{remark}\rm
(i) The proof of  Proposition \ref{PROP:stodiv} also applies to  $\T^d$.
See Remark~\ref{REM:divcov}\,(ii) for details.
In particular,
$\{ \<20>_N \}_{N \in \N}$
forms a divergent sequence  in $C([0,T]; \mathcal{D}'(\T^d))$ 
almost surely for  $\al \ge 1-\frac d4$.

\smallskip

\noi
(ii) It is interesting 
to note that we can prove local well-posedness
of SNLW \eqref{SNLW1}
for the entire range $0 < \al < \frac 12$ {\it without} using the paracontrolled approach 
as in the three-dimensional case~\cite{GKO2}.

\end{remark}

\begin{remark}\label{REM:scaling1}\rm

In a recent preprint \cite{DNY}, 
Deng, Nahmod, and Yue  introduced the notion of probabilistic scaling 
and the associated critical regularity.
This is based on the observation
that the Picard second iterate\footnote{More precisely, 
the Picard second iterate minus the linear solution
(= the Picard first iterate).
For the sake of simplicity,  however, we  refer to this as the Picard second iterate.
For example, a Picard iteration scheme for SNLW \eqref{SNLW1}
with the zero initial data
yields the $j$th Picard iterates $P_j$, $j = 1, 2$,  given by 
$P_1 = \<1>$
and $P_2 =\<1> - \I(\<2>) = \<1> - \<20>$. 
For simplicity, we refer to 
$ \<20>$ as the Picard second iterate in the following
(where we also dropped the insignificant $-$ sign).

In the random data well-posedness theory
for the quadratic NLW:
$ \dt^2 u  + (1 -  \Dl)  u +  u^2  = 0$
with random initial data $(u_0^\o, u_1^\o)$, 
the first two Picard iterates are given by 
$P_1 = S(t) (u_0^\o, u_1^\o)$
and $P_2 = P_1 - \I(P_1^2)$, 
where a proper renormalization is applied to $P_1^2$.
Once again, for the sake of simplicity, 
we refer to $P_2 - P_1 
=\I(P_1^2) =  \I\big((S(t) (u_0^\o, u_1^\o))^2\big)$
as the Picard second iterate in this discussion.} should be (at least) as smooth
as a stochastic convolution (or a random linear solution in the context
of the random data well-posedness theory).
In their terminology, 
 the quadratic SNLW \eqref{SNLW1} on $\T^2$
 is critical when $\al_* = \frac 34$.
 Proposition~\ref{PROP:stodiv}, however, 
 shows that  the Picard second iterate $\<20>$ is not well defined for $\al \ge \frac 12$
in the sense that 
 each Fourier coefficient 
$\ft{\<20>}_N(n, t)$  diverges as $N \to \infty$.\footnote{While
 we work on the quadratic
 nonlinear wave equation (NLW) with a stochastic forcing, 
 the same divergence result  also holds for the quadratic NLW
 with random initial data considered in \cite{DNY}.}
This in particular  implies that 
 the existing solution theory such as the Da Prato-Debussche trick
or its higher order variants \cite{BOP3, OPTz}\footnote{This includes the paracontrolled approach
used in \cite{GKO2}.} 
breaks down at $\al = \frac 12$ {\it before} reaching the critical value $\al_* = \frac 34$.
See also Remark~\ref{REM:divcov}
for the general $d$-dimensional case.

We now make several remarks.
(i) The discrepancy between the critical value $\al_* = \frac 34$
predicted by the probabilistic scaling and the actual value $\al = \frac 12$
for the non-existence of the Picard second iterate $\<20>$ (in the limiting sense)
stems from  the fact that, as discussed in \cite{DNY}, 
the probabilistic scaling only takes into account
several simple\footnote{A ``critical'' value should be something which 
can be computed in advance without too much difficulty.
In this sense, the simplification made in \cite{DNY} in capturing 
main interactions seems appropriate.} interactions (high-to-high and high-to-low)
in computing a critical value.
In the proof of Proposition 
 \ref{PROP:stodiv}, we make a more precise computation
 in proving the divergence of the Picard second iterate.
 (ii) As we see in the next subsection, 
 an analogous phenomenon occurs for the quadratic SNLH~\eqref{SNLH1} on $\T^2$.
More precisely,  while the critical value predicted by the scaling analysis
for \eqref{SNLH1}
 is $\al = 2$, the Picard second iterate $\<20>$
 fails to exist already at $\al = 1$ in the heat case.
See  Remark~\ref{REM:scaling2} below.
In both the wave and heat cases, 
this pathological behavior 
(i.e.~the divergence of the Picard second iterate and thus
the breakdown of the existing solution theory) 
before reaching the predicted critical values
seems to be closely related to the fact that we are dealing with {\it very} rough noises
(rougher than the space-time white noise).
This is in particular relevant 
in studying a stochastic PDE (or a deterministic PDE with random initial data)
with a nonlinearity of low degree
(and also in low dimensions).
For example, 
we may expect  a similar discrepancy 
for the  nonlinear Schr\"odinger equation (NLS)
with a quadratic nonlinearity:
\begin{align*}
i \dt u   -  \Dl  u +  \NN(u, \cj u)  = 0
\end{align*}

\noi
with rough random initial data, 
where $\NN(u, \cj u)  = u^2$, $\cj u^2$, or $|u|^2$
(with a proper renormalization).

\end{remark}

\subsection{Stochastic nonlinear heat equation}
\label{SUBSEC:heat}

In this subsection, we go over the corresponding results for the quadratic SNLH \eqref{SNLH1} on $\T^2$.
With $\I = \big(\dt + (1 - \Dl)\big)^{-1}$, 
let $\<1>$ and $\<20>$ be as in~\eqref{X1a}
and  $\<2>$ be  the Wick renormalization of $\<1>^2$.
We first state the crucial regularity result for the stochastic terms.

\begin{proposition} \label{PROP:heat}
{\rm (i)} 
For  $0 < \al < \frac 12$ and $\eps>0$,
$\{ \<2>_N \}_{N \in \N}$ is a Cauchy sequence
in $C(\R_+;\C^{-2\al- \eps} (\T^2))$
almost surely.
 In particular, 
denoting the limit by $\<2>$,
we have
  \[\<2> \in C(\R_+;\C^{-2\al- \eps}(\T^2))
  \]
almost surely.
On the other hand, 
for $\al \geq \frac 12$,
$\{ \<2>_N \}_{N \in \N}$
forms a divergent sequence 
in $C([0,T]; \mathcal{D}'(\T^2))$ for any $T>0$, almost surely.
Here, $\C^{s}(\T^2)$ denotes the H\"older-Besov space defined in \eqref{besov1}.

\smallskip
\noi
{\rm (ii)}
For  $0 < \al <  1$ and $\eps>0$,
$\{ \<20>_N \}_{N \in \N}$ is
 a Cauchy sequence
in $C(\R_+;\C^{2-2\al- \eps} (\T^2))$
almost surely.
 In particular, 
denoting the limit by $\<20>$,
we have
  \[\<20> \in C(\R_+;\C^{2-2\al- \eps}(\T^2))
  \]
almost surely.
On the other hand, 
for $\al \geq 1$,
$\{ \<20>_N \}_{N \in \N}$
forms a divergent sequence 
in $C([0,T]; \mathcal{D}'(\T^2))$ for any $T>0$, almost surely.

\end{proposition}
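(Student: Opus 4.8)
The plan is to treat (i) and (ii) in parallel, on the Fourier side, using that the relevant objects lie in the second homogeneous Wiener chaos. Denote by $\ft{\<1>}(n,t)$ the Fourier coefficients of the heat stochastic convolution $\<1>=\I(\jb{\nb}^\al\xi)$; with the convention $\<1>(-\infty)=0$ this is a stationary Ornstein--Uhlenbeck type process, and the It\^o isometry gives
\begin{align*}
\E\big[\ft{\<1>}(n,t)\,\overline{\ft{\<1>}(n,s)}\big]=\tfrac12\,\jb{n}^{2\al-2}\,e^{-|t-s|\jb{n}^2},
\end{align*}
so $\<1>_N\to\<1>$ in $C(\R_+;\C^{-\al-\eps})$ almost surely. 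Since $\<2>_N$ (the Wick square of $\<1>_N$) and $\<20>_N=\I(\<2>_N)$ are second-chaos, Wick's theorem turns all of their covariances into explicit lattice sums in the kernel above, and the proposition reduces to two deterministic inputs. The first is an estimate for the double time integral created by the extra Duhamel factor in $\<20>$: with $n=n_1+n_2$, so that $\jb{n_1}^2+\jb{n_2}^2\ges\jb{n}^2$,
\begin{align*}
J(n;n_1,n_2;t):=\int_0^t\!\!\int_0^t e^{-\jb{n}^2(2t-t'-t'')-|t'-t''|(\jb{n_1}^2+\jb{n_2}^2)}\,dt'\,dt''\ \les\ \frac{1}{\jb{n}^2\,(\jb{n_1}^2+\jb{n_2}^2)},
\end{align*}
and this bound is sharp in the high-to-low regime: $J(n;n_1,n_2;t)\ges_t\jb{n}^{-2}(\jb{n_1}^2+\jb{n_2}^2)^{-1}$ when $\jb{n_1}\sim\jb{n_2}\gg\jb{n}$ and $t$ lies in a fixed compact subset of $(0,\infty)$. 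The second is a standard power-function convolution estimate on $\Z^2$: $\sum_{n_1+n_2=n}\jb{n_1}^{2\al-2}\jb{n_2}^{2\al-2}\sim\jb{n}^{4\al-2}$ for $\al<\tfrac12$, and $\sum_{n_1+n_2=n}\jb{n_1}^{2\al-2}\jb{n_2}^{2\al-2}(\jb{n_1}^2+\jb{n_2}^2)^{-1}\sim\jb{n}^{4\al-4}$ for $\al<1$ (with a harmless logarithm at $\al=\tfrac12$).

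For the convergence halves, combining the inputs gives $\E|\ft{\<2>}_N(n,t)|^2\les\jb{n}^{4\al-2}$ for $0<\al<\tfrac12$ and $\E|\ft{\<20>}_N(n,t)|^2\les\jb{n}^{4\al-6}$ for $0<\al<1$, uniformly in $N$ and $t$; restricting the lattice sums to $\max(|n_1|,|n_2|)>M$ produces an extra factor $M^{-\delta}$, and using $|e^{-|t-s|\jb{m}^2}-1|\les(|t-s|\jb{m}^2)^{\g}$ yields the H\"older-in-time versions. Gaussian hypercontractivity upgrades these second-moment bounds to arbitrary finite moments; a Besov embedding together with Kolmogorov's continuity criterion in $(t,N)$ and a Borel--Cantelli argument over dyadic frequencies then shows that $\{\<2>_N\}$ is Cauchy in $C([0,T];\C^{-2\al-\eps})$ and $\{\<20>_N\}$ is Cauchy in $C([0,T];\C^{2-2\al-\eps})$ almost surely, for every $T>0$. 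This is the standard scheme for stochastic convolutions, and I would simply cite it.

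For the divergence halves, fix $n\in\Z^2$ and $t>0$ and regard the objects as sequences in $N$. The increments $\ft{\<2>}_N(n,t)-\ft{\<2>}_{N-1}(n,t)$, and likewise $\ft{\<20>}_N(n,t)-\ft{\<20>}_{N-1}(n,t)$, depend only on the Gaussian modes $\ft{\<1>}(m,\cdot)$ with $\max(|m|,|n-m|)=N$; since these families are disjoint across $N$, the increments are \emph{independent}, mean-zero random variables. By Wick's theorem and the sharp lower bound for $J$, as $N\to\infty$,
\begin{align*}
\mathrm{Var}\big(\ft{\<2>}_N(n,t)-\ft{\<2>}_{N-1}(n,t)\big)&\ \sim\ N^{4\al-3},\\
\mathrm{Var}\big(\ft{\<20>}_N(n,t)-\ft{\<20>}_{N-1}(n,t)\big)&\ \ges_t\ \frac{N^{4\al-5}}{\jb{n}^2}.
\end{align*}
In dimension two $\sum_N N^{4\al-3}=\infty$ exactly when $\al\ge\tfrac12$, and $\sum_N N^{4\al-5}=\infty$ exactly when $\al\ge1$; in those ranges Kolmogorov's three-series theorem forces $\{\ft{\<2>}_N(n,t)\}_N$, respectively $\{\ft{\<20>}_N(n,t)\}_N$, to diverge almost surely for every fixed $n\in\Z^2$ and $t>0$. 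Fixing one such pair and noting that convergence of $\{\<2>_N\}$, resp.\ $\{\<20>_N\}$, in $C([0,T];\D'(\T^2))$ would force convergence of the pairing with $e_{-n}$ at time $t$, we conclude that these sequences diverge in $C([0,T];\D'(\T^2))$ almost surely, for every $T>0$. (The mechanism is the high-to-low interaction $n_1\sim-n_2$ with $|n_1|\to\infty$, as in ill-posedness proofs for deterministic NLW in negative Sobolev spaces; this also gives the divergence of each individual Fourier coefficient recorded in the remark after the statement.)

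The main obstacle is establishing the two deterministic inputs, in particular the \emph{matching} upper and lower bounds for $J$ and the careful evaluation of the $\Z^2$ convolution sums, including the borderline logarithm at $\al=\tfrac12$; these are what locate the thresholds, making the Wiener-chaos variances of $\<2>_N$ non-summable precisely at $\al=\tfrac12$ and those of $\<20>_N$ precisely at $\al=1$. Everything downstream is routine: Gaussian hypercontractivity and Kolmogorov's continuity criterion on the convergence side, and Kolmogorov's three-series theorem on the divergence side.
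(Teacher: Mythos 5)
Your proposal is correct and follows essentially the same route as the paper: second moments of Fourier coefficients via Wick's theorem and the covariance kernel $\kk_n(t_1,t_2)=\tfrac12\jb{n}^{2\al-2}e^{-(t_1-t_2)\jb{n}^2}$, the discrete convolution estimates with the case split $|n_1|\sim|n_2|\gg|n|$ versus $|n_1|\sim|n|\ges|n_2|$, the matching upper and lower bounds on the double Duhamel integral, hypercontractivity plus a Kolmogorov-type continuity criterion (packaged in the paper as Lemma \ref{LEM:reg}) for convergence, and the decomposition into independent mode contributions with Kolmogorov's three-series theorem and zero-one law for divergence. The only cosmetic differences are that the paper indexes the independent summands by the pair $\{k,n-k\}$ rather than by shell increments in $N$, and states the lower bound under the explicit condition $t\gg|n|^{-2}$.
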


In short, Proposition \ref{PROP:heat} states that 
$\<2>$ is a distribution-valued function
if and only if $\al < \frac 12$, 
while 
$\<20>$ is a distribution-valued function
if and only if $\al < 1$.
Hence, for the range $\frac 12 \leq \al < 1$, 
while $\<2>(t)$ does not make sense as a spatial distribution, 
 $\<2> = (\dt + (1-\Dl)) \<20>$ makes sense as a space-time distribution. 
As mentioned above, such a phenomenon
is already known for the parabolic $\Phi^4_3$-model; see \cite{EJS, MWX}.
Proposition \ref{PROP:heat}
exhibits sharp contrast with the situation
for SNLW discussed earlier (Proposition \ref{PROP:stodiv} above), 
where the threshold $\al = \frac 12$ applies
to both $\<2>$ and $\<20>$.

We now state a sharp local well-posedness result for the quadratic SNLH \eqref{SNLH1}.

\begin{theorem}\label{THM:WP2}
Let  $0<\al<1$
 and $s> - \al - \eps $
 for sufficiently small $\eps > 0$.
 Then, the quadratic SNLH \eqref{SNLH1} on $\T^2$
 is locally well-posed in $\C^s(\T^2)$.
More precisely,  
 there exists a sequence of constants $\{\kk_N\}_{N\in \N}$
 tending to $\infty$ \textup{(}see \eqref{kap} below\textup{)} such that, 
 given 
any  $u_0 \in \C^{s}(\T^2)$,  
there exists 
 an almost surely positive stopping time  $T = T(\o)$ such that 
the solution $u_N$ to the following renormalized SNLH:
\begin{align*}
\begin{cases}
\dt u_N + (1-  \Dl)  u_N  +    u_N^2  - \kk_N =  \jb{\nb}^\al \pi_N \xi\\
u_N|_{t = 0} = u_0
\end{cases}
\end{align*}

\noi
converges almost surely to some limiting  process $u \in C([0, T]; \C^{-\al -\eps} (\T^2))$
for any $\eps > 0$.

\end{theorem}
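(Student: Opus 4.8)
The plan is to prove Theorem~\ref{THM:WP2} by the first-order Da Prato--Debussche trick when $0<\al<\frac12$ and by the second-order expansion \eqref{X1} when $\frac12\le\al<1$, in both cases reducing the problem to a fixed point for the residual term $v$ in a suitable H\"older--Besov space, with the stochastic data handled by Proposition~\ref{PROP:heat}. First I would record the regularities: by Proposition~\ref{PROP:heat}\,(i), $\<1>\in C(\R_+;\C^{-\al-}(\T^2))$ and $\<2>\in C(\R_+;\C^{-2\al-}(\T^2))$ for $\al<\frac12$; by Proposition~\ref{PROP:heat}\,(ii), $\<20>\in C(\R_+;\C^{2-2\al-}(\T^2))$ for $\al<1$, while $\<2>=(\dt+(1-\Dl))\<20>$ still makes sense as a space-time distribution in the range $\frac12\le\al<1$. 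The heat Duhamel operator $\I=(\dt+(1-\Dl))^{-1}$ gains two spatial derivatives (more precisely, $\|\I f\|_{C_TH^{s+2-}}\lesssim\|f\|_{C_TH^{s}}$ with a small loss, or a parabolic smoothing estimate in $\C^s$), so in the first-order regime $v=u-\<1>$ is expected to land in $C_T\C^{2-2\al-}$.

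Next, in the regime $0<\al<\frac12$, the equation for $v=u-\<1>$ is, after Wick renormalization $\<1>^2\rightsquigarrow\<2>$,
\begin{align*}
v = e^{-t(1-\Dl)}u_0 - \I\big(v^2 + 2v\<1> + \<2>\big).
\end{align*}
All the deterministic products make sense in $\C^s$: $v^2$ is fine since $2-2\al->0$; the resonant/worst part of $v\<1>$ requires $(2-2\al-)+(-\al-)>0$, i.e.\ $\al<\frac23$, which holds; and $\<2>$ is simply treated as a given element of $C_T\C^{-2\al-}$ with $-2\al-2>-\infty$. One then sets up a contraction in $C([0,T];\C^\sigma(\T^2))$ for $\sigma$ slightly below $2-2\al$ (and $\sigma>s$ to accommodate the initial data $u_0\in\C^s$, using that $e^{-t(1-\Dl)}$ smooths), with $T=T(\o)$ small depending on the size of $(\<1>,\<2>)$ on a fixed interval; this gives an almost surely positive stopping time. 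The limit $u=\<1>+v$ inherits the regularity $\C^{-\al-}$ of $\<1>$, and convergence of $u_N$ follows from the convergence of $(\<1>_N,\<2>_N)$ in Proposition~\ref{PROP:heat} together with continuity of the solution map on the enhanced data.

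For $\frac12\le\al<1$ the second-order expansion $u=\<1>-\<20>+v$ is needed because $\<2>$ is no longer a distribution-valued function of time; the residual satisfies, analogously to \eqref{SNLW4},
\begin{align*}
v = e^{-t(1-\Dl)}u_0 - \I\big((v-\<20>)^2 + 2v\<1>\big) + 2\I\big(\<20>\<1>\big),
\end{align*}
where the $\<2>$ produced by squaring has cancelled against $\I(\<2>)=\<20>$. Now $v$ is expected in $C_T\C^{2-\al-}$ (the Duhamel gain of $2$ on top of the $\C^{-\al-}$ forcing term $v\<1>$), and all products must be checked: $\<20>\<1>$ needs $(2-2\al-)+(-\al-)>0$, i.e.\ $\al<\frac23$, so for $\frac23\le\al<1$ one must instead give meaning to the resonant piece $\<20>\pe\<1>$ by stochastic analysis and prescribe it as part of the enhanced data, exactly as in the wave case — here I would either invoke the analogue of Proposition~\ref{PROP:sto2} in the heat setting or prove the required second-moment/Wick-chaos bound directly. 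The main obstacle is precisely this resonant product in the upper part of the range $\al\uparrow1$: one must verify that $\<20>\pe\<1>$ converges in $C_T\C^{-\al-\eps}$ almost surely via a Wiener-chaos $L^p$ estimate and Kolmogorov-type continuity, and that its regularity (inheriting the bad $-\al-$ from $\<1>$) is still good enough for $\I$ of it to close the fixed point with room to spare; once this enhanced data set $(u_0,\<1>,\<20>,\<20>\pe\<1>)$ is in hand, a standard contraction in $C([0,T];\C^\sigma)$ with $\sigma$ slightly below $2-\al$ and $T=T(\o)$ closes the argument, and the convergence $u_N\to u$ in $C([0,T];\C^{-\al-\eps})$ follows from the convergence of the stochastic data in Proposition~\ref{PROP:heat}.
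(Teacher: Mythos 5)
Your proposal follows essentially the same route as the paper: a Da Prato--Debussche expansion (first order for small $\al$, second order for $\al$ near $1$, with the resonant product $\<20>\pe\<1>$ prescribed stochastically once $\al\ge\frac23$), followed by a contraction exploiting the two-derivative gain of the heat Duhamel operator; the choice to switch regimes at $\al=\frac12$ rather than $\frac23$ is immaterial, and the paper in fact runs the second-order argument (Theorem \ref{THM:LWPv2}) uniformly over $0<\al<1$. Two points in your setup need repair before the argument closes. First, a contraction in $C([0,T];\C^{\s}(\T^2))$ with $\s>s$ cannot work as literally stated: the initial datum of the residual term is $v_0=u_0-\<1>(0)+\<20>(0)$ as in \eqref{IV5} --- not $u_0$, since the stationary stochastic convolution \eqref{stoc1H} does not vanish at $t=0$ --- and $v_0$ lies only in $\C^{-\al-\eps}$, so $P(t)v_0$ fails to belong to $\C^{\s}$ uniformly down to $t=0$. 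The standard remedy, implemented in the paper through the $X(T)$-norm containing the weight $\sup_{0<t\le T}t^{\frac{\s-s}{2}}\|v(t)\|_{\C^\s}$, is to allow a controlled blow-up at $t=0$; the Schauder estimate then produces a Beta-function integral convergent exactly for $-s<\s<s+2$, which is where the hypothesis $s>-\al-\eps$ enters. Second, the construction of $\<20>\pe\<1>$ for $\frac23\le\al<1$ is the one genuinely new stochastic input and you defer it; the paper supplies it in Lemma \ref{LEM:heat2} via a second-moment estimate on the Fourier coefficients (after removing the renormalized pairings), obtaining regularity $2-3\al-\eps$ --- better than the $-\al-\eps$ you anticipate, though the weaker bound would also suffice to close the fixed point. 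With these two repairs your plan coincides with the paper's proof.
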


In \cite{DPD2}, Da Prato and Debussche proved Theorem \ref{THM:WP2}
for $\al = 0$.
The same proof based on the Da Prato-Debussche trick also applies
for  $0 < \al < \frac 23$.
In this case, with the first order expansion~\eqref{X2}, 
the residual term $v = u - \<1>$ satisfies
\begin{align}
\begin{split}
\dt v  + (1 -  \Dl) v
&= -v^2 - 2 v \<1> - \<2>, 
\end{split}
\label{SNLH3}
\end{align}

\noi
where   $\<1>$ and $\<2>$ have regularities
$-\al -$
and $- 2\al - $, respectively.
Then, 
by repeating the analysis in the previous subsection
with two degrees of smoothing coming from the heat Duhamel integral operator, 
 $v$ has expected regularity $2 - 2\al - $
and thus the restriction $\al < \frac 23$ 
appears from $(2 - 2\al-) + (-\al-) > 0$ in  making sense of the product 
$v \<1>$ in \eqref{SNLH3}.
Then,    local well-posedness of \eqref{SNLH3}
easily follows
with an enhanced data set
$(u_0,  \<1>, \<20>)$.

For $\frac 23 \leq \al < 1$, 
the proof of Theorem \ref{THM:WP2} is based on  the second order expansion \eqref{X1}
and proceeds exactly as in the wave case (but without any multilinear smoothing).\footnote{Since there is no multilinear smoothing for the heat equation, ``parabolic thinking'' provides a correct insight.}
In this case, 
the residual term $v = u - \<1> + \<20>$ satisfies
\begin{align}
\begin{split}
\dt v +(1 -  \Dl)  v 
& = -  (v  - \<20>)^2 -2 v \<1> + 2 \<20>\<1>.
\end{split}
\label{SNLH4}
\end{align}

\noi
When $\al \geq \frac 23$, 
we can not make sense of the last product 
$\<20>\<1>$ in the deterministic manner. 
Using stochastic analysis, we can give a meaning to 
$\<20>\<1>$ 
as a distribution of regularity $-\al -$
for $\frac 23 \leq  \al < 1$.
See Lemma \ref{LEM:heat2}.
In this case, $v$ has expected regularity of $2 - \al - $
and thus the restriction $\al < 1$
also appears in making sense of the product $v\<1>$,
namely from $(2 - \al - ) + (-\al - ) > 0$.
Then, by applying
the standard Schauder estimate, 
we can easily  prove local well-posedness of \eqref{SNLH4}
with an enhanced data set:
\begin{align*}
 \big(u_0,  \<1>, \<20>,  \<21p>\big).
\end{align*}

\begin{remark}\rm
Let us compare the situations for SNLW \eqref{SNLW1}
and SNLH \eqref{SNLH1}.
In this discussion, we disregard initial data.
For the quadratic SNLH \eqref{SNLH1}, 
the required enhanced data set
consists  of 
$\<1>$ and $\<20>$
when $0 \leq  \al < \frac 23$. Namely, it involves only (the powers of)
the first order process $\<1>$.
When $\frac 23 \leq \al < 1$, 
it also involves the second order and the third order processes $\<20>$ and  $\<21p>$.
It is interesting to note that 
for the quadratic SNLW \eqref{SNLW1}, 
thanks to the multilinear smoothing effect (Proposition \ref{PROP:sto1}), 
there is now an intermediate regime $\frac 13\leq \al < \frac 5{12}$, 
where the required enhanced data set in \eqref{data2}
involves only the first and second order processes
(but not the third order process).
Furthermore, 
in this range, 
while the usual Da Prato-Debussche argument with \eqref{SNLW2}
fails, 
the refined 
Da Prato-Debussche argument \eqref{SNLW5} at the level of the Duhamel formulation works
thanks to the multilinear smoothing in Proposition \ref{PROP:sto1}.

\end{remark}

\begin{remark}\label{REM:scaling2} \rm

Consider the following scaling-invariant model
for the quadratic SNLH~\eqref{SNLH1}:
\begin{align*}
\dt u   - \Dl u +  u^2  = |\nb|^\al \xi.
\end{align*}

\noi
As in \cite{MW1},  
we now apply a scaling argument to find  a critical value of $\al $.
By applying the following parabolic scaling
(and the associated white noise scaling for $\xi$): 
\[
\wt{u}(x,t) = \ld^{\al} u(\ld x, \ld^2 t)
\qquad \text{and} \qquad
\wt{\xi}(x,t) = \ld^2 \xi (\ld x, \ld^2 t)
\]
for $\ld>0$, 
we obtain 
\[
\dt \wt{u} - \Dl \wt{u} +  \ld^{2-\al} \wt{u}^2 = |\nb|^{\al} \wt{\xi}.
\]

\noi
Then, by taking $\ld \to 0$, the nonlinearity formally vanishes when $\al < 2$.
This provides the critical value of $\al_* = 2$, 
(which agrees with the notion of local subcriticality introduced in~\cite{Hairer}).
It is very intriguing that, for the quadratic SNLH \eqref{SNLH1}, 
 the solution theory 
based on  the Da Prato-Debussche trick
or its higher order variants 
breaks
down at $\al = 1$ {\it before} reaching the critical value $\al_* = 2$.
See \cite{Hos} for a similar phenomenon in the context
of the KPZ equation with a fractional noise.
For dispersive equations including the quadratic SNLW, the scaling analysis 
as above does not seem to provide any useful insight,\footnote{For example, 
applying the hyperbolic scaling $(x, t) \mapsto (\ld x, \ld t)$, 
the scaling invariant version of SNLW~\eqref{SNLW1}
yields a critical value of $\al = \frac 52$, even higher than the heat case
but  the well-posedness theory for SNLW breaks down at $\al = \frac 12$.}
unless appropriate integrability conditions are  incorporated.  See,
 for example,~\cite{FOW} for a discussion in the case of the stochastic nonlinear Schr\"odinger equation.\footnote{In a recent preprint \cite{DNY}, a notion of  probabilistic scaling was
 introduced.
 While the criticality associated with this notion seems to 
 provide a good intuition for many problems, 
 it does not provide a good prediction  for the quadratic SNLW \eqref{SNLW1}.
 See  Remark~\ref{REM:scaling1}.
 }

\end{remark}

\begin{remark}\label{REM:fBM} \rm
Lastly, we state a remark on SNLW and SNLH with a fractional-in-time noise.
The space-time white noise $\xi$ in \eqref{SNLW1} and \eqref{SNLH1}
is given by a distributional time derivative of 
the $L^2$-cylindrical Wiener process $W$ (see \eqref{Wpro} below).
We may instead consider
a noise  $\xi^{H}= \dt W^H$ induced by 
a (spatially white) fractional-in-time Brownian motion $W^{H}$
with the Hurst parameter $0<H<1$.
When $H = \frac 12$, 
the noise $\xi^H$ reduces to the usual space-time white noise $\xi$.

We recall that  the stochastic convolution $\I (\xi^H) = \I_\text{heat} (\xi^H)$, 
emanating from the fractional-in-time noise $\xi^H$, 
has 
(spatial) regularity $2H-1-$.
See, for example, Theorem~4 in \cite{TTV}.
Namely, 
SNLH \eqref{SNLH1}
with 
the noise  $\jb{\nb}^\al \xi$ 
formally corresponds to the quadratic SNLH with 
the fractional-in-time noise $\xi^H$
with the Hurst parameter $H= \frac{1-\al}2$
and the well-posedness result in Theorem~\ref{THM:WP2} for $\al < 1$
seems to carry to the fractional-in-time noise case with $0 < H< \frac 12$.
Note that the threshold value $\al = 1$ in Theorem \ref{THM:WP2}
(and Proposition~\ref{PROP:heat}\,(ii)) corresponds
to the $H= 0$ case, which we do not discuss here.  
See, for example, \cite{FKS} for the study of the fractional Brownian motion
with $H = 0$ (which is a Gaussian process with stationary increments and logarithmic increment structure).

In the case of the wave equation, 
  the stochastic convolution $\I (\xi^H) = \I_\text{wave} (\xi^H)$, 
emanating from the fractional-in-time noise $\xi^H$, 
has 
(spatial) regularity $H-\frac 12-$.
See Proposition 1.2 in~\cite{Deya1}.
Thus, 
SNLW \eqref{SNLW1}
formally corresponds to the quadratic SNLW with 
the fractional-in-time noise $\xi^H$
with the Hurst parameter $H= \frac{1}2 - \al$.
In this case, 
local  well-posedness of the quadratic SNLW
with the fractional-in-time noise $\xi^H$ 
is known to hold for  $0 < H< \frac 12$
(corresponding to the range $0 < \al < \frac 12$
in  Theorem~\ref{THM:WP}).
See \cite{Deya1, Deya2}.
Note that the threshold value $\al = \frac 12$ in the wave case
also  corresponds
to the $H= 0$ case.
We also point out that in this fractional-in-time noise case, 
the regularities of  the stochastic convolutions $\I_\text{wave} (\xi^H)$
and $\I_\text{heat} (\xi^H)$ for the wave and heat equations
agree only when $H = \frac 12$. 

It is also possible to consider 
a noise  $\xi^{\vec H}= \dt W^{\vec H}$ coming from 
a  space-time fractional Brownian motion $W^{\vec H}$
with the Hurst parameter $\vec H = (H_0, H_1, H_2)$, 
$0 < H_j < 1$, where $H_0$ corresponds to the temporal direction 
and $H_1$ and $H_2$ correspond to the two spatial directions.
See,  for example, 
\cite{Deya1, Deya2} in the wave case.
In this setting, the threshold value $\al = \frac 12$
for SNLW~\eqref{SNLW1}
corresponds to $H_0 + H_1 + H_2 = 1$
and in this case, we expect the divergence of $\<20>_N$.\footnote{In 
the fractional noise case, a subscript $N$ signifies
that it is a stochastic process, coming from a certain  approximation
$W^{\vec H}_N$
of $W^{\vec H}$.  See \cite{Deya1, Deya2}.

In \cite{Deya2}, 
the divergence of $\<2>_N$ is established 
for $H_0 + H_1 + H_2 \le 1$.
In the dispersive setting, however, it is more important to study
the property of $\<20>_N$, i.e.~$\<2>_N$ under the Duhamel integral operator
since a common practice in dispersive PDEs
is to make sense of a product under the Duhamel integral operator, exploiting multilinear 
dispersion. For example, if we consider the stochastic cubic NLS on $\T$, 
forced by a space-time white noise: $i \dt u - \dx^2 u + |u|^2 u = \xi$
 (with a proper renormalization), 
then the renormalized product $\<3> = \, :\!|\I(\xi)|^2 \I(\xi)\!:$ of the three copies
of the stochastic convolution $\I(\xi) = \I_\text{Schr\"odinger}(\xi)$ does not make sense
as a distribution-valued function.
On the other hand, it is not difficult to see that the Picard second iterate
$\<30> = \I( :\!|\I(\xi)|^2 \I(\xi)\!:)$ is a well defined distribution-valued function of time.
The strength of Proposition \ref{PROP:stodiv} lies in showing that 
$\<20>_N$ indeed diverges at the same threshold as $\<2>_N$
(which is not something we expect commonly in the study of dispersive PDEs).}
In the heat case, 
 the stochastic convolution $\I (\xi^{\vec H}) = \I_\text{heat} (\xi^{\vec H})$, 
emanating from the space-time fractional noise $\xi^{\vec H}$, 
has 
(spatial) regularity $2H_0 + H_1 + H_2 -2-$.
In this case, the threshold value $\al = 1$
for SNLH~\eqref{SNLH1}
corresponds to $2H_0 + H_1 + H_2 = 1$, 
at which we expect an analogous 
divergence of the second order process $\<20>$
(in the limiting sense). 
We do not pursue this direction in this paper.

\end{remark}

This paper is organized as follows.
In Section \ref{SEC:2}, we introduce some notations and recall useful lemmas.
In 
Section \ref{SEC:3}, assuming the regularity properties
of the stochastic objects, 
we prove local well-posedness of SNLW \eqref{SNLW1}
(Theorem \ref{THM:WP}).
We then present details of the construction
of the stochastic objects in Section \ref{SEC:sto1}.
In particular, we prove 
the multilinear smoothing for $\<20>$
(Proposition \ref{PROP:sto1}) and 
divergence of $\<20>$ (Proposition \ref{PROP:stodiv}).
Finally, in Section~\ref{SEC:heat}, 
we present  proofs of Proposition \ref{PROP:heat} and Theorem \ref{THM:WP2}.

\section{Basic  lemmas}
\label{SEC:2}

In this section, we introduce some notations and go over basic lemmas.

\subsection{Notations}
We set
\begin{align}
e_n(x) := \frac1{2\pi}e^{in\cdot x}, 
\qquad  n\in\Z^2, 
\label{exp}
\end{align}
 
\noi 
for the orthonormal Fourier basis in $L^2(\T^2)$. 
Given $s \in \R$, we  define the  Sobolev space  $H^s (\T^2)$ by  the norm:
\[
\|f\|_{H^s(\T^2)} = \|\jb{n}^s\ft{f}(n)\|_{\l^2(\Z^2)},
\]

\noi
where $\ft{f}(n)$ is the Fourier coefficient of $f$ and  $\jb{\,\cdot\,} = (1+|\cdot|^2)^\frac{1}{2}$. 
We then set 
$\H^s(\T^2) = H^s(\T^2)\times H^{s-1}(\T^2)$
as in \eqref{H1}.
Similarly, given $s \in \R$ and $p \geq 1$, 
we define
 the  $L^p$-based Sobolev space (Bessel potential space) 
 $W^{s, p}(\T^2)$
 by the norm:
\[\| f\|_{W^{s, p}} = \| \jb{\nb}^s f \|_{L^p} = \big\| \F^{-1}( \jb{n}^s \ft f(n))\big\|_{L^p}.\]

\noi
When $p = 2$, we have $H^s(\T^2) = W^{s, 2}(\T^2)$. 
When we work with space-time function spaces, we use short-hand notations such as
 $C_TH^s_x = C([0, T]; H^s(\T^2))$.

For $A, B > 0$, we use $A\lesssim B$ to mean that
there exists $C>0$ such that $A \leq CB$.
By $A\sim B$, we mean that $A\lesssim B$ and $B \lesssim A$.
We also use  a subscript to denote dependence
on an external parameter; for example,
 $A\les_{\al} B$ means $A\le C(\al) B$,
  where the constant $C(\al) > 0$ depends on a parameter $\al$.

\subsection{Besov spaces and paraproduct estimates}

Given $j \in \N_0 := \N\cup\{0\}$,  let $\P_{j}$
 be the (non-homogeneous) Littlewood-Paley projector
 onto the (spatial) frequencies $\{n \in \Z^2: |n|\sim 2^j\}$
 such that 
 \[ f = \sum_{j = 0}^\infty \P_jf.\]

\noi
We then define 
 the Besov spaces $B^s_{p, q}(\T^2)$
 by the norm:
\begin{equation*}
\| f \|_{B^s_{p,q}} = \Big\| 2^{s j} \| \P_{j} f \|_{L^p_x} \Big\|_{\l^q_j(\N_0)}.
\end{equation*}

\noi
Note that  $H^s(\T^2) = B^s_{2,2}(\T^2)$.
We also define the H\"older-Besov space by setting
\begin{align}
\C^s (\T^2)= \B^s_{\infty,\infty}(\T^2).
\label{besov1}
\end{align}

Next, we recall the following paraproduct decomposition due to 
 Bony~\cite{Bony}.
See \cite{BCD, GIP} for further details.
 Given two functions $f$ and $g$ on $\T^2$
of regularities $s_1$ and $s_2$, respectively,
we write the product $fg$ as
\begin{align}
\begin{split}
fg 
& 
= f\pl g + f \pe g + f \pg g \\
 : \! & = \sum_{j < k-2} \P_{j} f \, \P_k g
+ \sum_{|j - k|  \leq 2} \P_{j} f\,  \P_k g
+ \sum_{k < j-2} \P_{j} f\,  \P_k g.
\end{split}
\label{para1}
\end{align}

\noi
The first term 
$f\pl g$ (and the third term $f\pg g$) is called the paraproduct of $g$ by $f$
(the paraproduct of $f$ by $g$, respectively)
and it is always well defined as a distribution
of regularity $\min(s_2, s_1+ s_2)$.
On the other hand, 
the resonant product $f \pe g$ is well defined in general 
only if $s_1 + s_2 > 0$.

We have the following product estimates.
See \cite{BCD, MW2} for details of the proofs in the non-periodic case
(which can be easily extended to the current periodic setting).

\begin{lemma}\label{LEM:para}
\textup{(i) (paraproduct and resonant product estimates)}
Let $s_1, s_2 \in \R$ and $1 \leq p, p_1, p_2, q \leq \infty$ such that 
$\frac{1}{p} = \frac 1{p_1} + \frac 1{p_2}$.
Then, we have 
\begin{align}
\| f\pl g \|_{B^{s_2}_{p, q}} \les 
\|f \|_{L^{p_1}} 
\|  g \|_{B^{s_2}_{p_2, q}}.  
\notag
\end{align}

\noi
When $s_1 < 0$, we have
\begin{align}
\| f\pl g \|_{B^{s_1 + s_2}_{p, q}} \les 
\|f \|_{B^{s_1 }_{p_1, q}} 
\|  g \|_{B^{s_2}_{p_2, q}}.  
\notag
\end{align}

\noi
When $s_1 + s_2 > 0$, we have
\begin{align}
\| f\pe g \|_{B^{s_1 + s_2}_{p, q}} \les 
\|f \|_{B^{s_1 }_{p_1, q}} \|  g \|_{B^{s_2}_{p_2, q}}.  
\notag
\end{align}

\noi
\textup{(ii)}
Let $s_1 <  s_2 $ and $1\leq p, q \leq \infty$.
Then, we have 
\begin{align} 
\| u \|_{B^{s_1}_{p,q}} 
&\les \| u \|_{W^{s_2, p}}.
\notag
\end{align}

\end{lemma}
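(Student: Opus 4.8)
The plan is to reduce everything to standard Littlewood-Paley block estimates, exactly as in the Euclidean theory of \cite{BCD, MW2}; the passage to $\T^2$ introduces nothing new, since the projectors $\P_j$ on the torus retain all the properties used there --- uniform $L^p$-boundedness of $\P_j$ and of the partial sums $\P_{<m} := \sum_{j<m}\P_j$, Bernstein's inequality, and almost orthogonality of the dyadic annuli. The three ingredients I would use are: (a) locate the Fourier support of each term in the trichotomy \eqref{para1}; (b) bound a single block $\P_\ell$ of the product via H\"older's inequality and Bernstein; (c) carry out the resulting $\ell^q$ summation in $\ell$.

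For the paraproduct estimates I would write $f\pl g = \sum_k (\P_{<k-2}f)\,\P_k g$ and observe that each summand has spatial Fourier support in an annulus $\{|n|\sim 2^k\}$, so that the sum defining $\P_\ell(f\pl g)$ only ranges over $|\ell-k|\le C$. This gives $\|\P_\ell(f\pl g)\|_{L^p}\les \sum_{|k-\ell|\le C}\|\P_{<k-2}f\|_{L^{p_1}}\,\|\P_k g\|_{L^{p_2}}$. For the first bound I would use $\|\P_{<k-2}f\|_{L^{p_1}}\les \|f\|_{L^{p_1}}$ uniformly in $k$; for the second (the case $s_1<0$) I would instead sum a geometric series, $\|\P_{<k-2}f\|_{L^{p_1}}\les \sum_{j<k-2}2^{-s_1 j}\big(2^{s_1 j}\|\P_j f\|_{L^{p_1}}\big)\les 2^{-s_1 k}\|f\|_{B^{s_1}_{p_1,\infty}}$ (with H\"older in $j$ if one wants to keep the sharp $\ell^q$-norm of $f$). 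In either case, multiplying by the relevant power $2^{s_2\ell}$ or $2^{(s_1+s_2)\ell}\sim 2^{(s_1+s_2)k}$ and taking the $\ell^q$-norm in $\ell$ (a discrete Young inequality in the shift $\ell-k$) closes the estimate. The bound for $f\pg g$ is symmetric.

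For the resonant product $f\pe g = \sum_{|j-k|\le 2}\P_j f\,\P_k g$ the only change is that each summand now has Fourier support in a full ball $\{|n|\les 2^k\}$ rather than an annulus, so that $\P_\ell(f\pe g)$ collects all frequencies $k\ges \ell-C$; thus $\|\P_\ell(f\pe g)\|_{L^p}\les \sum_{k\ges \ell-C}2^{-(s_1+s_2)k}a_k b_k$ with $a_k = 2^{s_1 k}\|\P_k f\|_{L^{p_1}}$ and $b_k = 2^{s_2 k}\|\P_k g\|_{L^{p_2}}$ (Bernstein absorbing the loss from $j\sim k$). Multiplying by $2^{(s_1+s_2)\ell}$ produces a geometric factor $2^{(s_1+s_2)(\ell-k)}$ that is summable over $k\ges \ell-C$ precisely because $s_1+s_2>0$, and Young's inequality for the $\ell^q$-sum in $\ell$ then gives $\|f\pe g\|_{B^{s_1+s_2}_{p,q}}\les \|f\|_{B^{s_1}_{p_1,q}}\|g\|_{B^{s_2}_{p_2,q}}$. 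Part (ii) I would obtain from the chain $W^{s_2,p}\hookrightarrow B^{s_2}_{p,\infty}\hookrightarrow B^{s_1}_{p,1}\hookrightarrow B^{s_1}_{p,q}$: the first embedding is Mikhlin's multiplier theorem ($\|\P_j u\|_{L^p}\les 2^{-s_2 j}\|\jb{\nb}^{s_2}u\|_{L^p}$), and the second is once more a geometric summation using $s_1-s_2<0$.

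There is no genuine obstacle here --- these are textbook facts, and the paper's own proof will presumably just cite \cite{BCD, MW2}. The only point requiring (mild) care is the bookkeeping of the $q$-index through the internal sums over $j$ and through the discrete Young inequalities in step (c), together with the routine verification that the periodic Littlewood-Paley calculus (uniform $L^p$ bounds on $\P_j$, Bernstein) is in force. Accordingly, in the paper I would simply state the estimates and refer to \cite{BCD, MW2} for these details.
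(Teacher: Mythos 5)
Your proposal is correct and is exactly the standard Littlewood--Paley argument that the paper invokes: the paper gives no proof of Lemma \ref{LEM:para}, simply citing \cite{BCD, MW2} for the non-periodic case and noting the routine extension to $\T^2$, which is precisely what you anticipated. The details you sketch (frequency-support localization of each term in the trichotomy, the geometric summation using $s_1<0$ resp.\ $s_1+s_2>0$, and the embedding chain for part (ii)) are the correct ones and contain no gaps.
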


\subsection{Product estimates
and discrete convolutions}

Next, we recall the following product estimates.
See \cite{GKO} for the proof.

\begin{lemma}\label{LEM:bilin}
 Let $0\le \al \le 1$.

\smallskip

\noi
\textup{(i)} Suppose that 
 $1<p_j,q_j,r < \infty$, $\frac1{p_j} + \frac1{q_j}= \frac1r$, $j = 1, 2$. 
 Then, we have  
\begin{equation*}  
\| \jb{\nb}^\al (fg) \|_{L^r(\T^d)} 
\les \Big( \| f \|_{L^{p_1}(\T^d)} 
\| \jb{\nb}^\al g \|_{L^{q_1}(\T^d)} + \| \jb{\nb}^\al f \|_{L^{p_2}(\T^d)} 
\|  g \|_{L^{q_2}(\T^d)}\Big).
\end{equation*}

\smallskip

\noi
\textup{(ii)} 
Suppose that  
 $1<p,q,r < \infty$ satisfy the scaling condition:
$\frac1p+\frac1q\leq \frac1r + \frac{\al}d $.
Then, we have
\begin{align*}
\| \jb{\nb}^{-\al} (fg) \|_{L^r(\T^d)} \les \| \jb{\nb}^{-\al} f \|_{L^p(\T^d) } 
\| \jb{\nb}^\al g \|_{L^q(\T^d)}.  
\end{align*}

\end{lemma}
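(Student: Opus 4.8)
The plan is to recognize (i) as the inhomogeneous fractional Leibniz (Kato--Ponce) inequality on $\T^d$ and (ii) as its dual counterpart, and to derive (ii) from (i) together with Sobolev embedding, following the scheme of \cite{GKO}.

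For (i), I would decompose $fg$ by the paraproduct trichotomy $fg = f\pl g + f\pe g + f\pg g$ of \eqref{para1}. In the low--high term $f\pl g = \sum_{k} \P_{\ll k}f\,\P_k g$ the $k$-th summand has output frequency $\sim 2^k$, so applying $\jb{\nb}^\al$ amounts, up to a harmless (H\"ormander--Mikhlin) Littlewood--Paley multiplier, to inserting $\jb{\nb}^\al$ on the single high factor $\P_k g$; summing in $k$ via the Triebel--Lizorkin square function characterization $\|h\|_{L^r}\sim\|(\sum_j|\P_j h|^2)^{1/2}\|_{L^r}$ for $1<r<\infty$, Bernstein's inequality, the Fefferman--Stein maximal inequality, and H\"older's inequality with $\frac1{p_1}+\frac1{q_1}=\frac1r$, this piece is controlled by $\|f\|_{L^{p_1}}\|\jb{\nb}^\al g\|_{L^{q_1}}$. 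The high--low term $f\pg g$ is symmetric and produces $\|\jb{\nb}^\al f\|_{L^{p_2}}\|g\|_{L^{q_2}}$. The resonant term $f\pe g = \sum_{|j-k|\le2}\P_j f\,\P_k g$ is the heart of the matter: its output frequency can be arbitrarily low, so $\jb{\nb}^\al$ cannot be reduced to a one-factor operation by frequency localization, and moreover the bilinear symbol $\jb{\xi+\eta}^\al/(\jb{\xi}^\al+\jb{\eta}^\al)$ is not smooth across $\{\xi+\eta=0\}$ when $\al<1$. I would handle this term with the Coifman--Meyer bilinear multiplier theorem (equivalently, by invoking the Kato--Ponce inequality directly): the relevant bilinear operators are bounded $L^{p_1}\times L^{q_1}\to L^r$ for $1<p_1,q_1,r<\infty$, which, combined with $\jb{\nb}^\al\P_j\approx 2^{\al j}\P_j$, controls $\jb{\nb}^\al(f\pe g)$ by the two asserted products. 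A cleaner alternative I would mention is that the fractional Leibniz rule for this range of exponents is classical on $\R^d$ and transfers to $\T^d$ by de Leeuw--type transference for bilinear Fourier multipliers; the hypothesis $0\le\al\le1$ is more than (i) needs and is only the range used later, and if one accepts a slightly weaker Besov-level bound one may instead extract a version of (i) from Lemma~\ref{LEM:para}.

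For (ii), I would reduce to (i) by duality and Sobolev embedding. Since $\jb{\nb}^{-\al}$ is self-adjoint,
\begin{align*}
\|\jb{\nb}^{-\al}(fg)\|_{L^r}
&= \sup_{\|h\|_{L^{r'}}\le1} \big| \langle fg,\ \jb{\nb}^{-\al}h \rangle \big| \\
&= \sup_{\|h\|_{L^{r'}}\le1} \big| \langle \jb{\nb}^{-\al}f,\ \jb{\nb}^\al(g\,\jb{\nb}^{-\al}h) \rangle \big| \\
&\le \|\jb{\nb}^{-\al}f\|_{L^p}\,\sup_{\|h\|_{L^{r'}}\le1}\big\|\jb{\nb}^\al(g\,\jb{\nb}^{-\al}h)\big\|_{L^{p'}}.
\end{align*}
Applying (i) to the last factor with exponents adapted to $L^{p'}$ and using $\jb{\nb}^\al\jb{\nb}^{-\al}h=h$, it remains to bound $\|\jb{\nb}^\al g\|_{L^q}\|\jb{\nb}^{-\al}h\|_{L^{b_1}} + \|g\|_{L^{b_2}}\|h\|_{L^{r'}}$ with $\tfrac1{p'}=\tfrac1q+\tfrac1{b_1}=\tfrac1{b_2}+\tfrac1{r'}$. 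Here $\|\jb{\nb}^{-\al}h\|_{L^{b_1}}\lesssim\|h\|_{L^{r'}}$ and $\|g\|_{L^{b_2}}\lesssim\|\jb{\nb}^\al g\|_{L^q}$ follow from the Sobolev embeddings $W^{\al,r'}(\T^d)\hookrightarrow L^{b_1}(\T^d)$ and $W^{\al,q}(\T^d)\hookrightarrow L^{b_2}(\T^d)$, whose validity reduces precisely to the assumed scaling inequality $\frac1p+\frac1q\le\frac1r+\frac{\al}d$, supplemented by the trivial bounds (boundedness of $\jb{\nb}^{-\al}$ on $L^s$ for $1<s<\infty$, together with $L^s\hookrightarrow L^{s'}$ for $s\ge s'$ on the finite-measure torus) in the range where the target exponent does not exceed the source exponent.

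The step I expect to be the main obstacle is the resonant term in (i): because $\jb{\nb}^\al$ of two frequency-comparable factors is not frequency-localized and the associated symbol is singular on $\{\xi+\eta=0\}$, this piece is not amenable to a purely Littlewood--Paley argument — naive triangle-inequality estimates lose an $\eps$ of regularity or force the wrong Lebesgue exponents — and it genuinely requires a bilinear singular-integral input (Coifman--Meyer, i.e.\ the Kato--Ponce theorem). The remaining work — checking that the auxiliary H\"older exponents $b_1,b_2,p'$ in the duality argument for (ii) stay in $(1,\infty)$ throughout the range in which the lemma will be applied (this holds e.g.\ whenever $\frac1p+\frac1q<1$; the complementary range can be handled by the endpoint form of (i) or by interpolation with trivial $L^r$ estimates), and tracking the harmless Littlewood--Paley multiplier corrections in (i) — is routine.
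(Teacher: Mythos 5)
Your proposal is correct and coincides with the argument of the cited reference: the paper itself does not reprove this lemma but refers to \cite{GKO}, where (i) is the Coifman--Meyer/Kato--Ponce fractional Leibniz rule and (ii) is deduced from (i) exactly by the duality-plus-Sobolev-embedding computation you give, the passage from the equality case $\frac1p+\frac1q=\frac1r+\frac\al d$ to the inequality being the ``straightforward modification'' the paper mentions after the statement. Your bookkeeping of the auxiliary exponents $b_1,b_2$ correctly isolates where the scaling condition enters, so there is nothing further to add.
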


Note that
while  Lemma \ref{LEM:bilin} (ii) 
was shown only for 
$\frac1p+\frac1q= \frac1r + \frac{\al}d $
in \cite{GKO}, 
the general case
$\frac1p+\frac1q\leq \frac1r + \frac{\al}d $
follows from a straightforward modification
of the proof.

We also recall the following basic lemma on a discrete convolution.

\begin{lemma}\label{LEM:SUM}
\textup{(i)}
Let $d \geq 1$ and $\al, \be \in \R$ satisfy
\[ \al+ \be > d  \qquad \text{and}\qquad \al, \be < d.\]
\noi
Then, we have
\[
 \sum_{n = n_1 + n_2} \frac{1}{\jb{n_1}^\al \jb{n_2}^\be}
\les \jb{n}^{d - \al - \be}\]

\noi
for any $n \in \Z^d$.

\smallskip

\noi
\textup{(ii)}
Let $d \geq 1$ and $\al, \be \in \R$ satisfy $\al+ \be > d$.
\noi
Then, we have
\[
 \sum_{\substack{n = n_1 + n_2\\|n_1|\sim|n_2|}} \frac{1}{\jb{n_1}^\al \jb{n_2}^\be}
\les \jb{n}^{d - \al - \be}\]

\noi
for any $n \in \Z^d$.

\end{lemma}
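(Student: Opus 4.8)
The plan is the standard dyadic decomposition of the discrete convolution, organized by which of $|n_1|$ and $|n_2|$ dominates; everything reduces to a couple of elementary summation bounds. Throughout write $n_2 = n - n_1$, and note that in part~(i) the hypotheses $\al + \be > d$ and $\al, \be < d$ together force $0 < \al, \be < d$. First I would record the two facts used repeatedly: for $\g < d$ and $R \geq 1$,
\[
\sum_{|m| \les R} \jb{m}^{-\g} \les \jb{R}^{d - \g},
\]
and, because $d - \al - \be < 0$, the dyadic geometric series $\sum_{M \geq R,\ M \text{ dyadic}} M^{d - \al - \be} \les \jb{R}^{d - \al - \be}$; both follow by comparison with integrals.

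For part~(i), I would decompose the sum over $n_1 \in \Z^d$ into the low--high region $\{|n_1| \leq 2^{-10}|n|\}$, the high--low region $\{|n_2| \leq 2^{-10}|n|\}$, and the high--high region where $|n_1|, |n_2| > 2^{-10}|n|$ (when $|n| \les 1$ only the last region occurs). On the low--high region one has $|n_2| \geq |n| - |n_1| \gtrsim |n|$, so $\jb{n_2}^{-\be} \les \jb{n}^{-\be}$, and the remaining $n_1$--sum is $\les \jb{n}^{d - \al}$ by the first bound (here $\al < d$), giving the desired $\jb{n}^{d - \al - \be}$; the high--low region is symmetric, now using $\be < d$. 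On the high--high region both $\jb{n_1}, \jb{n_2} \gtrsim \jb{n}$; splitting it further into the balanced piece $|n_1| \sim |n_2|$ and the lopsided pieces --- which, since $n_1 + n_2 = n$ forces $|n_j| \ll |n|$ whenever $|n_1| \gg |n_2|$ or vice versa, are reabsorbed into the first two regions --- a dyadic decomposition $|n_1| \sim |n_2| \sim M$ with $M \gtrsim |n|$ gives contribution $\les \sum_{M \gtrsim |n|} M^d M^{-\al - \be} \les \jb{n}^{d - \al - \be}$ by the second bound. This balanced estimate is the only step genuinely using $\al + \be > d$, and it is where the exponent $d - \al - \be$ comes from; the rest is bookkeeping.

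Part~(ii) is precisely this balanced estimate in isolation, now valid without the restrictions $\al, \be < d$: the constraint $|n_1| \sim |n_2| =: M$ together with $|n| = |n_1 + n_2| \leq 2M$ forces $M \gtrsim |n|$, whence
\[
\sum_{\substack{n = n_1 + n_2 \\ |n_1| \sim |n_2|}} \frac{1}{\jb{n_1}^\al \jb{n_2}^\be}
\les \sum_{\substack{M \gtrsim |n| \\ M \text{ dyadic}}} \sum_{|n_1| \sim M} M^{-\al - \be}
\les \sum_{\substack{M \gtrsim |n| \\ M \text{ dyadic}}} M^{d - \al - \be}
\les \jb{n}^{d - \al - \be},
\]
using only $\al + \be > d$. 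I do not expect any real obstacle here: the content lies entirely in correctly setting up the region decomposition in~(i) and in the elementary dyadic summation, and the one point to watch is that each region's contribution is genuinely bounded by $\jb{n}^{d - \al - \be}$ (rather than merely by a finite constant), which the choice of regions above is designed to ensure.
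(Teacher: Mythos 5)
Your argument is correct, and it is essentially the argument the paper has in mind: the paper offers no proof of this lemma, deferring to ``elementary computations'' and to Lemmas 4.1--4.2 of \cite{MWX}, whose proofs are exactly this low--high/high--low/high--high decomposition combined with the two elementary summation bounds $\sum_{|m|\les R}\jb{m}^{-\g}\les \jb{R}^{d-\g}$ (for $\g<d$) and the dyadic geometric series. Your observation that (i) forces $0<\al,\be<d$ and that only the balanced region uses $\al+\be>d$ correctly isolates why (ii) needs no upper bound on $\al,\be$, so there is nothing to add.
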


Note that we do not have the restriction $\al, \be < d$ in the resonant case (ii).
Lemma \ref{LEM:SUM} follows
from elementary  computations.
See, for example,  Lemmas 4.1 and 4.2 in \cite{MWX}.

\subsection{Linear estimates}

In this subsection, we recall linear estimates for the wave and heat equations. 
First, we state the energy estimate for solutions to the nonhomogeneous linear wave equation
$\T^d$:
\begin{align}
\begin{cases}
\dt^2 u + (1-\Dl) u = F\\
(u, \dt u) |_{t = 0} = (u_0, u_1).
\end{cases}
\label{LW}
\end{align}

\noi
By writing \eqref{LW} in the Duhamel formulation, we have
\begin{align}
u (t) = S(t) (u_0, u_1) 
+ \I (F)(t), 
\label{LW3}
\end{align}

\noi
where 
the linear wave propagator $S(t)$ is defined by 
\begin{align}
S(t) (u_0, u_1) =   \cos (t \jb{\nb}) u_0 + \frac{\sin (t \jb{\nb})}{\jb{\nb}} u_1 
\label{LW3a}
\end{align}

\noi
and
the wave Duhamel integral operator $\I$ is defined by 
\begin{align}
\I(F) (t) 
&=  \int_0^t \frac{\sin ((t-t')\jb{\nb})}{\jb{\nb}}F(t') dt'.
\label{LW4}
\end{align}

\noi
Then, the following energy estimate follows from 
 \eqref{LW3}, \eqref{LW4}, 
and 
the unitarity of the linear wave propagator $S(t)$ in $\H^s(\T^d)$.

\begin{lemma}\label{LEM:energy}
Let $s \in \R$.
Then, the solution $u$ to \eqref{LW} satisfies
\begin{align*}
\| u \|_{L_T^\infty H_x^s}
\les  \| (u_0,u_1) \|_{\H^s} + \| F \|_{L_T^1 H_x^{s-1}}
\end{align*}

\noi
for any $T>0$.
\end{lemma}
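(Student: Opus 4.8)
The plan is to prove the energy estimate directly from the Duhamel formulation \eqref{LW3} together with the mapping properties of the linear wave propagator $S(t)$ defined in \eqref{LW3a}. First I would observe that, since $\cos(t\jb{\nb})$ and $\jb{\nb}^{-1}\sin(t\jb{\nb})$ act as Fourier multipliers with symbols bounded (in modulus) by $1$ and $\jb{n}^{-1}$ respectively, the map $(u_0,u_1)\mapsto S(t)(u_0,u_1)$ is bounded from $\H^s(\T^d)=H^s\times H^{s-1}$ to $H^s(\T^d)$, uniformly in $t\in\R$; indeed $\|\cos(t\jb{\nb})u_0\|_{H^s}\le\|u_0\|_{H^s}$ and $\|\jb{\nb}^{-1}\sin(t\jb{\nb})u_1\|_{H^s}=\|\jb{n}^{s-1}\sin(t\jb{n})\ft u_1(n)\|_{\l^2}\le\|u_1\|_{H^{s-1}}$ by Plancherel. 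Hence
\begin{align*}
\|S(t)(u_0,u_1)\|_{H^s_x}\les\|(u_0,u_1)\|_{\H^s}
\end{align*}
for every $t$, so taking the supremum over $t\in[0,T]$ controls the first term in \eqref{LW3} by $\|(u_0,u_1)\|_{\H^s}$.

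Next I would handle the Duhamel term $\I(F)(t)=\int_0^t\frac{\sin((t-t')\jb{\nb})}{\jb{\nb}}F(t')\,dt'$ from \eqref{LW4}. For fixed $t,t'$ the operator $\frac{\sin((t-t')\jb{\nb})}{\jb{\nb}}$ is again a Fourier multiplier with symbol bounded by $\jb{n}^{-1}$, so by the same Plancherel argument $\big\|\frac{\sin((t-t')\jb{\nb})}{\jb{\nb}}F(t')\big\|_{H^s_x}\le\|F(t')\|_{H^{s-1}_x}$. Then by Minkowski's integral inequality,
\begin{align*}
\|\I(F)(t)\|_{H^s_x}\le\int_0^t\Big\|\frac{\sin((t-t')\jb{\nb})}{\jb{\nb}}F(t')\Big\|_{H^s_x}\,dt'\le\int_0^t\|F(t')\|_{H^{s-1}_x}\,dt'\le\|F\|_{L^1_TH^{s-1}_x},
\end{align*}
uniformly in $t\in[0,T]$. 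Combining the two bounds and taking $\sup_{t\in[0,T]}$ in \eqref{LW3} gives the claimed estimate.

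There is essentially no serious obstacle here: the statement is a routine consequence of the explicit representation \eqref{LW3a}, \eqref{LW4} and the fact that the Klein--Gordon multipliers are uniformly bounded after accounting for the one derivative lost by the $\jb{\nb}^{-1}$ factor. The only point requiring minor care is the use of Minkowski's inequality to move the $H^s_x$-norm inside the time integral, which is legitimate since $F\in L^1_TH^{s-1}_x$ by hypothesis, and the observation that all constants are independent of $t$ and $T$ so that the final supremum in time is harmless. One could alternatively phrase the whole argument on the Fourier side, writing $\ft u(n,t)=\cos(t\jb{n})\ft u_0(n)+\jb{n}^{-1}\sin(t\jb{n})\ft u_1(n)+\int_0^t\jb{n}^{-1}\sin((t-t')\jb{n})\ft F(n,t')\,dt'$, multiplying by $\jb{n}^s$, taking $\l^2_n$ and then $L^\infty_t$ norms, and using the pointwise bounds $|\cos|\le 1$, $|\sin|\le1$; this makes the uniformity in $t$ transparent.
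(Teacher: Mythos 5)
Your proposal is correct and follows exactly the route the paper indicates: the estimate is deduced from the Duhamel formulation \eqref{LW3}--\eqref{LW4} together with the uniform boundedness (unitarity) of the Klein--Gordon multipliers on $\H^s$, plus Minkowski's inequality for the inhomogeneous term. This is the same (standard) argument the paper has in mind, so there is nothing to add.
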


In \cite{GKO, ORSW}, the authors used the Strichartz estimates 
to study local well-posedness of the stochastic nonlinear wave equations.
Note, however, that 
the Strichartz estimates are not needed 
for proving local well-posedness of  the quadratic NLW  
 in two dimensions.
More precisely, the energy estimate (Lemma \ref{LEM:energy}), Sobolev's inequality, and 
a standard 
contraction argument yield local well-posedness of the quadratic NLW in $H^s(\T^2)$ for $s>0$.

Next, we recall the Schauder estimate for the heat equation.
Let $P (t)  = e^{-t (1-\Dl)}$ denote the linear heat propagator defined 
as a Fourier multiplier operator:
\begin{align}
P(t) f = \sum_{n \in \Z^2} e^{-t \jb{n}^2} \ft f(n) e_n
\label{heat1}
\end{align}

\noi
for $t \geq 0$.
Then, we have the following Schauder estimate
on $\T^d$.

\begin{lemma} \label{LEM:Schauder}
Let $ -\infty< s_1 \leq  s_2 < \infty$.
Then, we have 
\begin{align}
\| P(t) f \|_{\C^{s_2}}
\les t^{\frac{s_1 -s_2}{2}} \| f \|_{\C^{s_1}}
\label{regheat}
\end{align}

\noi
for any $t > 0$. 
\end{lemma}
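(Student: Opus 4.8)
The plan is to prove the Schauder estimate \eqref{regheat} directly by analyzing the Fourier multiplier of $P(t) = e^{-t(1-\Delta)}$ at the level of Littlewood--Paley blocks, which is the standard route. First I would fix $j \in \N_0$ and apply the Littlewood--Paley projector $\P_j$ to $P(t)f$. Since $\P_j$ localizes to frequencies $|n| \sim 2^j$ (with the $j=0$ block covering $|n| \lesssim 1$), the operator $P(t)$ restricted to this block is a Fourier multiplier with symbol $e^{-t\jb{n}^2}$ supported on $|n|\sim 2^j$, so I would write $\P_j P(t) f = m_{j,t} * \P_j f$ where $\widehat{m_{j,t}}(n) = e^{-t\jb{n}^2}\phi_j(n)$ for a suitable smooth cutoff $\phi_j$ adapted to the annulus. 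The key point is a kernel estimate: by a routine integration-by-parts/scaling argument one checks $\|m_{j,t}\|_{L^1(\T^2)} \lesssim e^{-ct 2^{2j}}$ uniformly in $j$ and $t>0$ (here one uses $\jb{n}^2 \gtrsim 2^{2j}$ on the support, and that the derivatives of the Gaussian-type symbol on the annulus are controlled; the $\T^2$ case follows from the $\R^2$ case by the transference principle or by a direct periodic summation). By Young's inequality this gives $\|\P_j P(t) f\|_{L^\infty} \lesssim e^{-ct 2^{2j}}\|\P_j f\|_{L^\infty}$.

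Then I would assemble the Besov norm. By definition of $\C^{s_2} = B^{s_2}_{\infty,\infty}$,
\begin{align*}
\|P(t) f\|_{\C^{s_2}} = \sup_{j \geq 0} 2^{s_2 j}\|\P_j P(t) f\|_{L^\infty} \lesssim \sup_{j\geq 0} 2^{s_2 j} e^{-ct 2^{2j}} \|\P_j f\|_{L^\infty}.
\end{align*}
Now I insert $\|\P_j f\|_{L^\infty} = 2^{-s_1 j}\big(2^{s_1 j}\|\P_j f\|_{L^\infty}\big) \leq 2^{-s_1 j}\|f\|_{\C^{s_1}}$, so the bound becomes $\|f\|_{\C^{s_1}} \sup_{j\geq 0} 2^{(s_2-s_1)j} e^{-ct 2^{2j}}$. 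The remaining task is the elementary scalar estimate $\sup_{j\geq 0} 2^{(s_2-s_1)j} e^{-ct 2^{2j}} \lesssim t^{-(s_2-s_1)/2}$, valid since $s_2 - s_1 \geq 0$: treating $x = 2^j \geq 1$ as a continuous variable, $\sup_{x\geq 1} x^{\theta} e^{-ctx^2} \leq \sup_{x>0} x^\theta e^{-ctx^2} = C_\theta (ct)^{-\theta/2}$ by calculus (setting $y = \sqrt{t}\,x$), where $\theta = s_2 - s_1 \geq 0$. This yields \eqref{regheat}.

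The only mild subtlety — and the step I would be most careful about — is the uniform kernel bound $\|m_{j,t}\|_{L^1} \lesssim e^{-ct2^{2j}}$, because one must ensure the implicit constant does not depend on $j$ or $t$. The cleanest way is to rescale: on the annulus $|n|\sim 2^j$ write $\jb{n}^2 = 2^{2j}\psi(2^{-j}n)$ where $\psi$ is smooth and bounded below by a positive constant on the support of the cutoff, so $e^{-t\jb{n}^2} = e^{-(t2^{2j})\psi(2^{-j}n)}$; then the rescaled kernel has $L^1$ norm controlled by finitely many seminorms of $e^{-\tau\psi}$ on the annulus, uniformly for $\tau = t2^{2j} \geq 0$, times the volume factor from rescaling — and the exponential decay $e^{-c\tau}$ is extracted by pulling out $e^{-c\tau}$ and checking the remaining symbol still has bounded seminorms. (For the low-frequency block $j=0$ the estimate is trivial since there $e^{-ct2^{2j}} \sim e^{-ct}$ and the number of frequencies is finite.) Everything else is bookkeeping; no dispersive or probabilistic input is needed, in contrast to the wave-equation estimates elsewhere in the paper.
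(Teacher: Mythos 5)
Your proof is correct and follows essentially the same route the paper takes: the paper's one-line proof cites the Littlewood--Paley block decay estimate $\|\P_j P(t) f\|_{L^\infty}\lesssim e^{-ct2^{2j}}\|\P_j f\|_{L^\infty}$ (Lemma 2.4 in \cite{BCD}) transferred to $\T^2$ by Poisson summation, which is exactly the kernel bound you establish, and the remaining assembly via $\sup_{j}2^{(s_2-s_1)j}e^{-ct2^{2j}}\lesssim t^{-(s_2-s_1)/2}$ is the standard bookkeeping you carry out. No gaps.
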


The bound \eqref{regheat} on $\T^d$ 
follows from the decay estimate for the heat kernel on $\R^d$ 
(see Lemma~2.4 in \cite{BCD}) and the Poisson summation formula
to pass such a decay estimate to $\T^d$.

\subsection{Tools from stochastic analysis}

Lastly, we recall useful lemmas from stochastic analysis.
Let $\{ g_n \}_{n \in \N}$ be a sequence of independent standard Gaussian random variables defined on a probability space $(\O, \F, P)$, where $\mathcal{F}$ is the $\s$-algebra generated by this sequence. 
Given $k \in \N_0$, 
we define the homogeneous Wiener chaoses $\mathcal{H}_k$ 
to be the closure (under $L^2(\O)$) of the span of  Fourier-Hermite polynomials $\prod_{n = 1}^\infty H_{k_n} (g_n)$, 
where
$H_j$ is the Hermite polynomial of degree $j$ and $k = \sum_{n = 1}^\infty k_n$.
We also set
\begin{align*}
 \H_{\leq k} = \bigoplus_{j = 0}^k \H_j
 \end{align*}

\noi
 for $k \in \N$.

We say that a stochastic process $X:\R_+ \to \mathcal{D}'(\T^d)$
is spatially homogeneous  if  $\{X(\cdot, t)\}_{t\in \R_+}$
and $\{X(x_0 +\cdot\,, t)\}_{t\in \R_+}$ have the same law for any $x_0 \in \T^d$.
Given $h \in \R$, we define the difference operator $\dl_h$ by setting
\begin{align*}
\dl_h X(t) = X(t+h) - X(t).
\end{align*}

\noi
The following lemma will be used in studying regularities of stochastic objects.
For the proof, see Proposition 3.6 in \cite{MWX} and Appendix in \cite{OOTz}.
In the following, we state the result
in terms of the Sobolev space $W^{s, \infty}(\T^d)$ but the same result
holds for the H\"older-Besov space $\C^s(\T^d)$.

\begin{lemma}\label{LEM:reg}
Let $\{ X_N \}_{N \in \N}$ and $X_0$ be spatially homogeneous stochastic processes
$:\R_+ \to \mathcal{D}'(\T^d)$.
Suppose that there exists $k \in \N$ such that 
  $X_N(t)$ and $X_0(t)$ belong to $\H_{\leq k}$ for each $t \in \R_+$.

\smallskip
\noi\textup{(i)}
Let $t \in \R_+$.
If there exists $s_0 \in \R$ such that 
\begin{align}
\E\big[ |\ft X_0(n, t)|^2\big]\les \jb{n}^{ - d - 2s_0}
\label{reg1}
\end{align}

\noi
for any $n \in \Z^d$, then  
we have
$X_0(t) \in W^{s, \infty}(\T^d)$, $s < s_0$, 
almost surely.
Furthermore, if there exists $\g > 0$ such that 
\begin{align}
\E\big[ |\ft X_N(n, t) - \ft X_M(n, t)|^2\big]\les N^{-\g} \jb{n}^{ - d - 2s_0}
\label{reg2}
\end{align}

\noi
for any $n \in \Z^d$ and $M\geq N \geq 1$, 
then 
$\{X_N(t)\}_{N\in\N }$ is a Cauchy sequence in  $W^{s, \infty}(\T^d)$, $s < s_0$, 
almost surely, 
thus converging to some limit $X(t)$ in 
 in  $W^{s, \infty}(\T^d)$.

\noi

\smallskip
\noi\textup{(ii)}
Let $T > 0$ and suppose that \textup{(i)} holds on $[0, T]$.
If there exists $\theta \in (0, 1)$ such that 
\begin{align}
 \E\big[ |\dl_h \ft X_0(n, t)|^2\big]
 \les \jb{n}^{ - d - 2s_0+ \theta}
|h|^\theta, 
\label{reg3}
\end{align}

\noi
for any  $n \in \Z^d$, $t \in [0, T]$, and $h \in [-1, 1]$,\footnote{We impose $h \geq - t$ such that $t + h \geq 0$.}
then we have 
$X_0 \in C([0, T]; W^{s, \infty}(\T^d))$, 
$s < s_0 - \frac \theta2$,  almost surely.
Furthermore, 
if there exists $\g > 0$ such that 
\begin{align}
 \E\big[ |\dl_h \ft X_N(n, t) - \dl_h \ft X_M(n, t)|^2\big]
 \les N^{-\g}\jb{n}^{ - d - 2s_0+ \theta}
|h|^\theta, 
\label{reg4}
\end{align}

\noi
for any  $n \in \Z^d$, $t \in [0, T]$,  $h \in [-1, 1]$, and $M \geq N \geq 1$, 
then 
$\{X_N\}_{N\in \N}$ 
is a Cauchy sequence  in $C([0, T]; W^{s, \infty}(\T^d))$, $s < s_0 - \frac{\theta}{2}$,
almost surely, 
thus converging to some process $X$
in $C([0, T]; W^{s, \infty}(\T^d))$.
\end{lemma}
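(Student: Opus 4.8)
The plan is to deduce the lemma from three standard facts: the Wiener chaos estimate (hypercontractivity), which for any $Y\in\H_{\le k}$ gives $\|Y\|_{L^p(\O)}\les_{p,k}\|Y\|_{L^2(\O)}$ for every $2\le p<\infty$; the fact that spatial homogeneity forces the Fourier coefficients $\{\ft X(n,t)\}_n$ to be pairwise uncorrelated, so that for every $\s\in\R$
\[
\E\big[|\jb{\nb}^\s X(t,x)|^2\big]=\frac1{(2\pi)^d}\sum_{n\in\Z^d}\jb{n}^{2\s}\,\E\big[|\ft X(n,t)|^2\big]
\]
whenever the series converges, which in particular is independent of $x$; and the Sobolev embedding $W^{\s,p}(\T^d)\embeds W^{s,\infty}(\T^d)$, valid whenever $s<\s-\frac dp$ (apply $W^{\s-s,p}\embeds L^\infty$ to $\jb{\nb}^sX$). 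The same scheme runs verbatim with $\C^s=B^s_{\infty,\infty}$ in place of $W^{s,\infty}$, using $B^\s_{p,p}\embeds\C^s$ under the same condition $s<\s-\frac dp$.

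For part~(i), fix $t\in\R_+$, pick $\s<s_0$ and $2\le p<\infty$. By homogeneity together with \eqref{reg1}, the series $\sum_n\jb{n}^\s\ft X_0(n,t)e_n(x)$ converges in $L^2(\O)$, so $\jb{\nb}^\s X_0(t,x)\in\H_{\le k}$ for each $x$, with $\E[|\jb{\nb}^\s X_0(t,x)|^2]\les\sum_n\jb{n}^{2\s-d-2s_0}<\infty$. Hypercontractivity then gives $\E[|\jb{\nb}^\s X_0(t,x)|^p]\les_{p,k}(\sum_n\jb{n}^{2\s-d-2s_0})^{p/2}$, a bound independent of $x$; integrating over $\T^d$ yields $\E[\|X_0(t)\|_{W^{\s,p}}^p]<\infty$, hence $\|X_0(t)\|_{W^{\s,p}}<\infty$ almost surely. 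Combining with the embedding and taking a countable intersection over $\s\uparrow s_0$ and $p\to\infty$, we get $X_0(t)\in W^{s,\infty}(\T^d)$ almost surely for every $s<s_0$. For the Cauchy statement, the same computation applied to $X_N(t)-X_M(t)\in\H_{\le k}$ using \eqref{reg2} gives $\E[\|X_N(t)-X_M(t)\|_{W^{s,\infty}}^p]\les N^{-\g p/2}$ for $s<s_0$, and a Borel--Cantelli argument along a dyadic sequence (with $p$ chosen large enough for the embedding) shows that $\{X_N(t)\}_N$ is almost surely Cauchy in $W^{s,\infty}(\T^d)$.

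For part~(ii), I would add Kolmogorov's continuity criterion. Fix $\s<s_0-\frac\theta2$ and $2\le p<\infty$. Applying the chain of inequalities above to $\dl_h X_0(t)\in\H_{\le k}$ and invoking \eqref{reg3} gives, uniformly in $t\in[0,T]$,
\[
\E\big[\|\dl_h X_0(t)\|_{W^{\s,p}}^p\big]\les_{p,k}\Big(\sum_n\jb{n}^{2\s}\,\E\big[|\dl_h\ft X_0(n,t)|^2\big]\Big)^{p/2}\les|h|^{\theta p/2}\Big(\sum_n\jb{n}^{2\s-d-2s_0+\theta}\Big)^{p/2}\les|h|^{\theta p/2},
\]
the last sum converging precisely because $\s<s_0-\frac\theta2$. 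Since $W^{\s,p}(\T^d)$ is separable and $\theta p/2>1$ for $p>2/\theta$, Kolmogorov's theorem (anchored by $\E\|X_0(t_0)\|_{W^{\s,p}}^p<\infty$ from part~(i)) produces a modification of $X_0$ with paths in $C([0,T];W^{\s,p}(\T^d))$; composing with $W^{\s,p}\embeds W^{s,\infty}$ for $s<\s-\frac dp$ and intersecting over $\s\uparrow s_0-\frac\theta2$, $p\to\infty$ gives $X_0\in C([0,T];W^{s,\infty}(\T^d))$ almost surely for every $s<s_0-\frac\theta2$ (in the applications $X_0$ is constructed as the limit of the $X_N$ and coincides with this modification). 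For the Cauchy statement, run the same Kolmogorov-with-rate argument on $Y_{N,M}=X_N-X_M$: \eqref{reg4} yields $\E\|\dl_h Y_{N,M}(t)\|_{W^{\s,p}}^p\les N^{-\g p/2}|h|^{\theta p/2}$ and \eqref{reg2} at a single time yields $\E\|Y_{N,M}(t_0)\|_{W^{\s,p}}^p\les N^{-\g p/2}$, whence $\E\|Y_{N,M}\|_{C([0,T];W^{\s,p})}^p\les N^{-\g p/2}$; this is summable in $N$ for $p$ large, so $\{X_N\}_N$ is almost surely Cauchy in $C([0,T];W^{s,\infty}(\T^d))$ for every $s<s_0-\frac\theta2$, with limit agreeing with $X_0$.

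There is no serious obstruction here: the discrete sums are handled by comparison with integrals and the remainder is bookkeeping of exponents. The two genuinely essential inputs are hypercontractivity---which is exactly what lets one pass from $\ell^2$-type control of the Fourier coefficients to an $L^\infty$-in-space bound by affording arbitrarily large moments in $\O$---and Kolmogorov's criterion for the temporal regularity in~(ii). The only mild technical point is that $W^{s,\infty}(\T^d)$ is non-separable, so Kolmogorov's theorem cannot be applied to it directly; this is circumvented by first establishing continuity into the separable space $W^{\s,p}(\T^d)$ and then invoking the Sobolev embedding.
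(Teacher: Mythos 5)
Your argument is correct and is essentially the standard proof that the paper delegates to its references (Proposition 3.6 in [MWX] and the appendix of [OOTz]): hypercontractivity for fixed Wiener chaos, the decorrelation of Fourier modes from spatial homogeneity, the embedding $W^{\s,p}(\T^d)\embeds W^{s,\infty}(\T^d)$ for $s<\s-\frac dp$ with $p$ large, and Kolmogorov's continuity criterion (in quantitative form for the Cauchy statement in part (ii)). Your closing remark correctly identifies the one genuine technical point, namely that Kolmogorov must be applied in the separable space $W^{\s,p}$ before passing to $W^{s,\infty}$ by embedding.
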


Lastly, we recall the following Wick's theorem.
See Proposition I.2 in \cite{Simon}.

\begin{lemma}\label{LEM:Wick}	
Let $g_1, \dots, g_{2n}$ be \textup{(}not necessarily distinct\textup{)}
 real-valued jointly Gaussian random variables.
Then, we have
\[ \E\big[ g_1 \cdots g_{2n}\big]
= \sum  \prod_{k = 1}^n \E\big[g_{i_k} g_{j_k} \big], 
\]

\noi
where the sum is over all partitions of $\{1, \dots, 2 n\}$
into disjoint pairs $(i_k, j_k)$.
\end{lemma}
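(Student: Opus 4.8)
The plan is to read off the identity from the explicit form of the joint moment generating function of a Gaussian vector. As is implicit in the statement---and as is the case in every application below, where each $g_i$ is a finite real linear combination of the independent standard Gaussians $\{g_n\}_{n\in\N}$---we take the $g_i$ to be centered, and we allow their covariance matrix $\Sigma = (\Sigma_{ij})_{1\le i,j\le 2n}$, $\Sigma_{ij} := \E[g_i g_j]$, to be singular (this covers the ``not necessarily distinct'' case). Classically, for $t = (t_1,\dots,t_{2n})\in\R^{2n}$,
\[
\varphi(t) := \E\big[ e^{\sum_{i=1}^{2n} t_i g_i} \big] = \exp\Big( \tfrac12 \sum_{i,j=1}^{2n} \Sigma_{ij}\, t_i t_j \Big),
\]
which one proves by diagonalising $\Sigma$ and reducing to the one-dimensional computation $\E[e^{tg}] = e^{t^2\sigma^2/2}$. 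Since a Gaussian vector has finite moments of all orders, $\varphi$ is smooth (in fact entire) and $\E[g_1\cdots g_{2n}] = \partial_{t_1}\cdots\partial_{t_{2n}}\varphi(t)\big|_{t=0}$.

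The remaining step is bookkeeping. Writing $Q(t) = \tfrac12\sum_{i,j}\Sigma_{ij}t_i t_j$ and $\varphi = \sum_{m\ge 0} Q(t)^m/m!$, the order-$2n$ operator $\partial_{t_1}\cdots\partial_{t_{2n}}$ annihilates $Q^m/m!$ at $t=0$ for every $m\ne n$ (the polynomial $Q^m$ has degree $2m<2n$ when $m<n$, and degree $2m>2n$ when $m>n$, the latter vanishing at the origin after only $2n$ differentiations), so
\[
\E[g_1\cdots g_{2n}] = \frac1{n!}\, \partial_{t_1}\cdots\partial_{t_{2n}}\, Q(t)^n\big|_{t=0}.
\]
Expanding $Q(t)^n = 2^{-n}\sum \Sigma_{i_1 j_1}\cdots\Sigma_{i_n j_n}\, t_{i_1}t_{j_1}\cdots t_{i_n}t_{j_n}$ over all $((i_1,j_1),\dots,(i_n,j_n))\in(\{1,\dots,2n\}^2)^n$ and differentiating once in each variable at $t=0$ retains exactly the tuples in which $1,\dots,2n$ each appear once, i.e.\ the ordered sequences of ordered pairs partitioning $\{1,\dots,2n\}$. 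Each unordered pairing $\{(i_k,j_k)\}_{k=1}^n$ arises from precisely $2^n n!$ such tuples ($n!$ orderings of the blocks, $2$ within each), and by symmetry of $\Sigma$ each contributes the same product $\prod_{k=1}^n\Sigma_{i_k j_k}$; the factor $2^n n!$ cancels the prefactor $2^{-n}/n!$, leaving $\sum\prod_{k=1}^n\E[g_{i_k}g_{j_k}]$ summed over all partitions of $\{1,\dots,2n\}$ into disjoint pairs, as claimed.

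The only genuinely delicate point is the justification of $\E[g_1\cdots g_{2n}] = \partial_{t_1}\cdots\partial_{t_{2n}}\varphi|_{t=0}$, i.e.\ the interchange of differentiation and expectation, which rests on the Gaussian moment bounds (dominated convergence, or directly the analyticity of $\varphi$); the combinatorial cancellation is then routine. A slightly shorter alternative, which I would include as a remark, is induction on $n$ via the Gaussian integration-by-parts identity $\E[g_1\,F(g_2,\dots,g_{2n})] = \sum_{j=2}^{2n}\E[g_1 g_j]\,\E[(\partial_j F)(g_2,\dots,g_{2n})]$ applied to $F(x_2,\dots,x_{2n}) = x_2\cdots x_{2n}$: this immediately produces $\E[g_1\cdots g_{2n}] = \sum_{j=2}^{2n}\E[g_1 g_j]\cdot\big(\text{sum over pairings of }\{2,\dots,2n\}\setminus\{j\}\big)$, the recursion generating the sum over all perfect matchings. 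Either way, the main (very mild) obstacle is the same: matching ordered tuples of pairs to unordered pairings with the correct multiplicity.
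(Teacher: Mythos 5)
The paper does not prove this lemma at all: it is quoted as a known result with a pointer to Proposition I.2 of Simon's book, so there is no in-paper argument to compare against. Your proof via the moment generating function is correct and complete — the reduction to the $m=n$ term of $\exp(Q)$, the $2^n n!$ multiplicity count matching ordered tuples of ordered pairs to unordered pairings, and the justification of differentiating under the expectation are all in order — and your observation that the identity requires the $g_i$ to be centered (which the statement leaves implicit but which holds in every application in the paper, where each $g_i$ is a stochastic integral against Brownian motion) is a worthwhile precision.
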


\section{Stochastic nonlinear wave equation with rough noise}
\label{SEC:3}

In this section, we consider SNLW \eqref{SNLW1}.
We first state the regularity properties
of the relevant stochastic terms 
and  reformulate the problem in terms of the residual term 
$v = u - \<1>$ or 
$v = u - \<1> + \<20>$.
We then 
 present a proof of Theorem \ref{THM:WP}.
The analysis of the stochastic terms will be presented in Section \ref{SEC:sto1}.

\subsection{Reformulation of SNLW}
\label{SUBSEC:reno}

Let  $W$ denote a cylindrical Wiener process on $L^2(\T^2)$:
\begin{align}
 W(t) & =  \sum_{n\in \Z^2}
\be_n (t) e_n,
\label{Wpro}
\end{align}

\noi
where $\{ \be_n\}_{n \in \Z^2} $ is a family of  mutually independent 
 complex-valued Brownian
motions
on a fixed probability space $(\O, \F, P)$
conditioned
 so that\footnote{In particular, we take $\be_0$ to be real-valued.}  $\be_{-n} = \cj{\be_n}$, $n \in \Z^2$. 
By convention, we normalize $\be_n$ such that 
$\text{Var}(\be_n(t)) = t$.
Then, 
 the stochastic convolution $\<1> = \I(\jb{\nb}^\al \xi) $ in the wave case
 can be formally written as 
\begin{align}
\<1>  
 = \int_{0}^t \frac{\sin ((t-t')\jb{\nb})}{\jb{\nb}^{1-\al}}dW(t')
 = \sum_{n \in \mathbb{Z}^2} e_n 
  \int_0^t \frac{\sin ((t - t') \jb{ n })}{\jb{ n }^{1-\al}} d \beta_n (t').
\label{stoc1}
\end{align}

\noi
We indeed construct the stochastic convolution $\<1>$ in \eqref{stoc1}
as the  limit of the truncated stochastic convolution  $\<1>_N$ defined by 
\begin{align}
\<1>_N
:= \sum_{\substack{n \in \mathbb{Z}^2 \\ |n| \le N}} e_n 
  \int_0^t \frac{\sin ((t - t') \jb{ n })}{\jb{ n }^{1-\al}} d \beta_n (t').
\label{so4a}
\end{align}

\noi
See Lemma \ref{LEM:stoconv} below.
We then define  the Wick power~$\<2>_N$
 by 
\begin{align}
\<2>_N   := (\<1>_N)^2 - \s_N, 
\label{so4b}
\end{align}

\noi
where $\s_N$ is given by 
\begin{align}
\s_N(t) & = \E\big[ (\<1>_N(x, t))^2\big]
=  \frac{1}{4\pi^2} \sum_{|n|\leq N}
\int_0^t \bigg[\frac{\sin((t - t')\jb{n})}{\jb{n}^{1-\al}} \bigg]^2 dt' \notag\\
& = \frac{1}{8\pi^2} \sum_{|n|\leq N} \bigg\{\frac{t}{\jb{n}^{2(1-\al)}} - \frac{\sin(2t \jb{n})}{2\jb{n}^{3-2\al}}\bigg\}
\sim t N^{2\al}
\label{sigma1}
\end{align}

\noi
for $\al>0$.
We have the following regularity
and convergence properties of $\<1>_N$ and $\<2>_N$
whose proofs are presented in Section \ref{SEC:sto1}.

\begin{lemma}\label{LEM:stoconv}
Let  $T >0$.

\smallskip

\noi
\textup{(i)}
For any $\al \in \R$ and $s<-\al$,
$\{ \<1>_N \}_{N \in \N}$ defined in \eqref{so4a} is
 a Cauchy sequence
in 
$C([0,T];W^{s,\infty}(\T^2))$ almost surely.
In particular,
denoting the limit by $\<1>$,
we have
  \[\<1> \in C([0,T];W^{-\al - \eps,\infty}(\T^2))
  \]
  
  \noi
  for any $\eps > 0$, 
  almost surely.

\smallskip

\noi
\textup{(ii)}
For any $0<\al<\frac{1}{2}$ and $s<-2\al$, 
$\{ \<2>_N \}_{N \in \N}$ defined in \eqref{so4b} is
 a Cauchy sequence
in 
$C([0,T];W^{s,\infty}(\T^2))$
almost surely.
In particular,
denoting the limit by $\<2>$,
we have
 \[\<2> \in C([0,T];W^{-2\al - \eps,\infty}(\T^2))\]
 
 \noi
 for any $\eps > 0$, 
 almost surely.
\end{lemma}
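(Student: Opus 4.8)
The plan is to apply the abstract regularity criterion of Lemma~\ref{LEM:reg} to the spatially homogeneous processes $\<1>_N$ and $\<2>_N$, so the task reduces to computing second moments of their Fourier coefficients and of their temporal increments. First I would record, from the explicit series \eqref{so4a}, that $\ft{\<1>}_N(n,t)$ is a centered Gaussian with $\E\big[|\ft{\<1>}_N(n,t)|^2\big] = \int_0^t \jb{n}^{-2(1-\al)} \sin^2((t-t')\jb{n})\, dt' \sim t\, \jb{n}^{-2+2\al}$, uniformly in $N$, which after matching against \eqref{reg1} with $d=2$ gives the parameter $s_0 = -\al$; the tail bound \eqref{reg2} for $M \ge N$ follows since $\ft{\<1>}_M(n,t) - \ft{\<1>}_N(n,t)$ vanishes for $|n|\le N$ and otherwise is controlled by $\jb{n}^{-2+2\al}\le N^{-\gamma}\jb{n}^{-2+2\al+\gamma}$ for any small $\gamma>0$. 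For the temporal increment bound \eqref{reg3}, I would write $\dl_h \ft{\<1>}_N(n,t)$ using the stochastic integral, split into the contribution from $[t,t+h]$ and the contribution from $[0,t]$ where $\sin((t-t')\jb n)$ is replaced by $\sin((t+h-t')\jb n) - \sin((t-t')\jb n)$, and use $|\sin(a+b)-\sin a|\les \min(1,|b|)\les |b|^\theta$ together with $|\sin|\le 1$; this produces a factor $\jb{n}^{\theta}|h|^{\theta}$ against the base decay for any $\theta\in(0,1)$, which is exactly \eqref{reg3}, and the difference version \eqref{reg4} again follows from frequency truncation. Then Lemma~\ref{LEM:reg}(ii) yields $\<1>\in C([0,T];W^{s,\infty})$ for $s < -\al - \tfrac\theta2$, and letting $\theta\to0$ gives the claimed regularity $-\al-$; this proves part~(i).

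For part~(ii), the process $\<2>_N = (\<1>_N)^2 - \s_N$ lies in the inhomogeneous chaos $\H_{\le 2}$, so Lemma~\ref{LEM:reg} still applies with $k=2$. Writing out $\ft{\<2>}_N(n,t) = \sum_{n = n_1+n_2,\ |n_i|\le N} \big(\ft{\<1>}_N(n_1,t)\ft{\<1>}_N(n_2,t) - \E[\cdots]\big)$, I would compute the variance via Wick's theorem (Lemma~\ref{LEM:Wick}): after subtracting the mean only the genuinely ``paired across the two factors'' contractions survive, giving
\[
\E\big[|\ft{\<2>}_N(n,t)|^2\big] \les \sum_{\substack{n = n_1+n_2\\ |n_i|\le N}} \jb{n_1}^{-2+2\al}\jb{n_2}^{-2+2\al}.
\]
Here is where the constraint $\al<\tfrac12$ enters: the exponent $2-2\al$ satisfies $0 < 2-2\al < 2 = d$, so Lemma~\ref{LEM:SUM}(i) applies (both exponents below $d$, and their sum $4-4\al > 2 = d$ precisely because $\al < \tfrac12$), yielding the bound $\les \jb{n}^{2 - 2(2-2\al)} = \jb{n}^{-2 + 4\al}$. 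Matching against \eqref{reg1} gives $s_0 = -2\al$. The Cauchy bound \eqref{reg2} comes from decomposing $\ft{\<2>}_M - \ft{\<2>}_N$ into pieces where at least one summation frequency exceeds $N$, extracting an $N^{-\gamma}$ at the cost of $\jb{n_i}^{\gamma}$, and reapplying Lemma~\ref{LEM:SUM}. The temporal increment bounds \eqref{reg3}--\eqref{reg4} are obtained by the same product/Wick computation with one copy of $\ft{\<1>}_N$ replaced by its increment $\dl_h\ft{\<1>}_N$, then invoking the increment estimate already established in part~(i) to gain $\jb{n_i}^{\theta}|h|^\theta$; one must be slightly careful that the renormalization constant $\s_N(t)$ is itself only $\tfrac12$-Hölder in $t$ (from \eqref{sigma1}), but this is harmless since it only affects the $n=0$ mode. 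Lemma~\ref{LEM:reg}(ii) then gives $\<2>\in C([0,T];W^{s,\infty})$ for $s < -2\al - \tfrac\theta2$, and $\theta\to0$ finishes the proof.

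The main obstacle is the discrete convolution sum in the variance estimate for $\<2>_N$: its summability is exactly what forces the restriction $\al < \tfrac12$, and one needs to apply Lemma~\ref{LEM:SUM}(i) with both exponents strictly below the dimension $d=2$ and their sum strictly above $d$ --- the borderline case $\al = \tfrac12$ is genuinely divergent (this is the content of the divergence statements elsewhere in the paper), so the bookkeeping here must be done carefully rather than with crude bounds. Everything else --- the Gaussian moment computations via Wick's theorem, the frequency-truncation trick for the Cauchy estimates, and the Hölder-in-time increment bounds --- is routine once this combinatorial heart is in place.
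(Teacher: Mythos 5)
Your proposal is correct and follows essentially the same route as the paper: estimate $\E\big[|\ft{\<1>}_N(n,t)|^2\big]\les\jb{n}^{-2+2\al}$ and, via Wick's theorem, $\E\big[|\ft{\<2>}_N(n,t)|^2\big]\les\sum_{n=n_1+n_2}\jb{n_1}^{-2+2\al}\jb{n_2}^{-2+2\al}\les\jb{n}^{-2+4\al}$ using Lemma~\ref{LEM:SUM} (this is exactly where $\al<\frac12$ enters), then feed these together with the H\"older-in-time increment bounds into Lemma~\ref{LEM:reg}. The paper merely defers the Cauchy/time-difference details to \cite{GKO2}, whereas you spell out the frequency-truncation and $\sin$-difference arguments explicitly; your side remark about the H\"older regularity of $\s_N$ is harmless but unnecessary, since $\s_N$ is the mean of $\ft{\<1>_N^2}(0,t)$ and drops out of the variance computation.
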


Next, we define the second order stochastic term $\<20>$ by
\begin{align}
  \<20> (t)
  :=  \I(\<2>) (t) = \int_{0}^t \frac{\sin ((t-t')\jb{\nb})}{\jb{\nb}}\<2>(t') dt',
\label{stoc2}
\end{align}

\noi
Then, 
Proposition \ref{PROP:sto1}
shows that 
 $ \<20>$ is a well-defined distribution
 and is a limit of the truncated version: 
\begin{align}
\<20>_N\ = \I(\<2>_N), 
\label{stoc3}
\end{align}

\noi
provided that $0 < \al < \frac 12$.

Next, we give a meaning to the third order process 
``\,$\<21> =  \<20>\<1>$\,''.
As mentioned in Section~\ref{SEC:1}, 
we need to use stochastic analysis for this purpose
when $\frac 5{12} \leq \al < \frac 12$.
Formally write the product 
 $\<20>\<1>$ as
 \[  \<20>\<1> =  \<20> \pl \<1> + \<20>\pe \<1> + \<20>\pg \<1>. \]

\noi
The paraproducts
$\<20> \pl \<1>$ and $\<20>\pg \<1>$
are always well defined as long as each of 
$\<20>$ and $ \<1>$ is well defined.
Thus, we need 
 stochastic analysis 
 only to  give a meaning to 
the resonant product $\<20> \pe \<1>$.

\begin{proposition}\label{PROP:sto2}
Let $0<\al<\frac 12$ and  
$s<s_\al-\al$, where $s_\al$ is as in \eqref{exreg}.
Set
$\<21p>_N := \<20>_N\pe \<1>_N$.
Then, given $T >0$, 
$\{ \<21p>_N \}_{N \in \N}$ is
 a Cauchy sequence
in 
$C([0,T];W^{s,\infty}(\T^2))$
almost surely.
In particular,
denoting the limit by $\<21p>$,
we have
  \[\<21p> \in C([0,T];W^{s_\al - \al - \eps, \infty}(\T^2))\]
  
  \noi
  for any $\eps > 0$, 
  almost surely.
\end{proposition}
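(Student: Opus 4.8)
The plan is to apply the abstract regularity criterion of Lemma~\ref{LEM:reg} to the spatially homogeneous process $\<21p>_N = \<20>_N \pe \<1>_N$, which lives in the inhomogeneous Wiener chaos $\H_{\le 4}$ (three copies of $\<1>_N$, one of which already lives inside $\<20>_N$, plus the Wick subtraction, keeping the total degree bounded). So the task reduces to bounding the second moment $\E[|\ft{\<21p>}_N(n,t)|^2]$, the difference moment $\E[|\ft{\<21p>}_N(n,t) - \ft{\<21p>}_M(n,t)|^2]$, and the time-increment analogues from \eqref{reg3}--\eqref{reg4}. For the moment bounds I would expand $\ft{\<21p>}_N(n,t)$ as an explicit sum over frequencies: writing $\<20>_N$ via \eqref{stoc3} and \eqref{so4b} as a double stochastic integral with a kernel built from the wave Duhamel operator acting on the product of two sines, then pairing against $\<1>_N$, one gets a sum over $n = n_1 + n_2 + n_3$ with $|n_1+n_2| \sim |n_3|$ (the resonant constraint) of a product of three factors $\jb{n_j}^{\al-1}$ (from $\<1>$) and the extra $\jb{n_1+n_2}^{-1}$ from the $\I$ in $\<20>$, together with the relevant time integrals of products of trigonometric functions.

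The key point — and the reason this is a \emph{multilinear smoothing} statement rather than mere power counting — is that after applying Wick's theorem (Lemma~\ref{LEM:Wick}) to compute $\E[|\ft{\<21p>}_N(n,t)|^2]$, one does \emph{not} simply integrate the trigonometric factors trivially. Instead, one should exploit the oscillation in the time integrals coming from the wave propagators $\sin((t-t')\jb{m})$: performing the $t'$-integration produces denominators involving sums and differences $\jb{n_1+n_2} \pm \jb{n_1} \pm \jb{n_2}$ etc., and it is precisely these ``modulation'' denominators, combined with a careful analysis of the resonant geometry $|n_1+n_2|\sim|n_3| \gg |n|$ (the high-to-high-to-low interaction, as in Remark~\ref{REM:smooth}), that yields the gain beyond $1 - 2\al$ encoded in $s_\al$. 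Concretely, I expect the computation of $\E[|\ft{\<21p>}_N(n,t)|^2]$ to split into the regime where the modulation is small (handled by the resonant discrete convolution bound Lemma~\ref{LEM:SUM}(ii), which crucially does not need $\al,\beta < d$) versus large (where the extra denominator is the source of smoothing), and after summing one should land on $\E[|\ft{\<21p>}_N(n,t)|^2] \lesssim \jb{n}^{-2-2(s_\al - \al) + \eps}$ for every $\eps>0$, uniformly in $N$ and $t \in [0,T]$. This matches condition \eqref{reg1} with $s_0 = s_\al - \al$ up to $\eps$-loss.

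The remaining pieces are more routine. The difference estimate \eqref{reg2} follows by the standard telescoping argument: the difference $\<21p>_N - \<21p>_M$ carries at least one factor restricted to frequencies $\gtrsim N$, which supplies a power $N^{-\g}$ at the cost of an arbitrarily small fraction of the available regularity budget (one has room to spare since the claimed endpoint $s_\al - \al$ is only approached, not attained). For the time-regularity bounds \eqref{reg3}--\eqref{reg4}, one writes $\dl_h \ft{\<21p>}_N(n,t)$ and distributes the increment over the three time-dependent factors, using that $|\sin((t+h)\jb{m}) - \sin(t\jb{m})| \lesssim \min(1, |h|\jb{m}) \lesssim |h|^\theta \jb{m}^\theta$ for any $\theta \in (0,1)$, which costs $\jb{m}^\theta$ and hence degrades $s_0$ by $\theta/2$ as in the statement of Lemma~\ref{LEM:reg}(ii); the same discrete-sum bookkeeping then closes the estimate.

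The main obstacle is the core oscillatory computation of $\E[|\ft{\<21p>}_N(n,t)|^2]$: one must set up the time integrals of products of $\sin((t-t')\jb{m})$-type factors carefully, extract the modulation denominators $\jb{n_1+n_2} \pm \jb{n_1}\pm\jb{n_2}$, and then perform the frequency sum over the resonant region in a way that genuinely captures the $\min(\al, \tfrac14)$ gain of \eqref{exreg} rather than only the trivial parabolic exponent. This is where the case distinction $\al \le \tfrac14$ versus $\al > \tfrac14$ enters, and where one should expect to reuse — essentially verbatim, via the paraproduct localization to $|n_1+n_2|\sim|n_3|$ — the multilinear dispersive analysis already carried out for $\<20>$ in the proof of Proposition~\ref{PROP:sto1}, now with the extra resonant pairing against $\<1>_N$ contributing the additional $-\al$ shift in regularity. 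Everything else is a standard application of Wick's theorem, Lemma~\ref{LEM:SUM}, and Lemma~\ref{LEM:reg}.
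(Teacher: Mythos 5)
Your proposal is correct in outline and follows the paper's strategy: reduce to uniform second-moment bounds via Lemma~\ref{LEM:reg}, expand in Fourier space under the resonant constraint, apply Wick's theorem, and exploit modulation denominators together with Lemma~\ref{LEM:SUM}\,(ii). The paper, however, organizes the core estimate more economically than you propose, and your write-up misses one class of terms. Before taking expectations, the paper splits $\<21p>_N$ into a fully non-contracted piece $\RR_{11}$ and partially contracted pieces $\RR_{12}, \RR_{13}, \RR_{14}$ arising from pairings of the outer $\<1>_N$ with one of the two copies inside $\<20>_N$ (these pairings are \emph{not} removed by the Wick renormalization of $\<2>_N$, and $\RR_{13}$ in particular is a nonvanishing first-chaos contribution); your Wick expansion of the sixth-order correlation would produce these cross-pairings, but you do not mention that they need separate treatment. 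For the main term $\RR_{11}$ the paper then applies Jensen's inequality, giving
$\E\big[|\ft \RR_{11}(n, t)|^2\big] \lesssim \sum_{n = m + n_3,\, |m|\sim|n_3|} \E\big[|\ft{\<20>}(m, t)|^2\big]\, \E\big[|\ft{\<1>}(n_3, t)|^2\big]$,
so the $\min(\al,\tfrac14)$ gain is imported wholesale from Proposition~\ref{PROP:sto1} and the ``extra $-\al$ shift'' is exactly Lemma~\ref{LEM:SUM}\,(ii) on the resonant sum --- no fresh trilinear oscillatory analysis is needed there, contrary to what your ``main obstacle'' paragraph suggests. The only genuinely new dispersive estimate is for $\RR_{13}$, and it is \emph{simpler} than the $\<20>$ computation: for $\al<\tfrac12$ the frequency sums converge absolutely, so a single integration by parts in $t_1$ producing the modulation $\jb{n+n_2}-\jb{n_2}-\jb{n}$, followed by the same angular Case-1/Case-2 analysis as in Proposition~\ref{PROP:sto1}, suffices. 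Your route would also work but is more laborious; the convergence and time-increment estimates are routine in both treatments, as you say.
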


Recall from Section \ref{SEC:1} that 
the standard Da Prato-Debussche trick yields local well-posedness
of SNLW \eqref{SNLW1}
for  $0 < \al  < \frac 13$.
When $\frac 13 \leq \al <  \frac{5}{12}$, 
we 
use the first order expansion~\eqref{X2}
and study  the Duhamel formulation \eqref{SNLW5}.

\begin{theorem}\label{THM:LWP}
Let $\frac 13 \le \al < \frac 5{12}$ and $s > \al$.
Then, the equation \eqref{SNLW5} 
 is locally well-posed
in $\H^s(\T^2)$.
More precisely, 
given any $(u_0, u_1) \in \H^s(\T^2)$, 
there exists an almost surely positive stopping time $T =T(\o)$
such that there exists  a unique solution $v \in C([0, T];H^\s(\T^2))$ to~\eqref{SNLW5}, 
where  $\s > \al$ is sufficiently close to  $\al$.
Furthermore, 
the solution $v$ 
depends  continuously 
on the enhanced data set\textup{:}
\begin{align}
\Xi = \big(u_0, u_1, \<1>, \<20>\big)
\notag
\end{align}

\noi
almost surely belonging to 
 the class\textup{:}
\begin{align}
\mathcal{X}^{s, \eps}_T
& = \H^s(\T^2) \times 
C([0,T]; W^{-\al - \eps, \infty}(\T^2)) 
\times 
C([0,T]; W^{s_\al - \eps, \infty}(\T^2))
\label{Edata1}
\end{align}

\noi
for some small $\eps = \eps(\al, s) > 0$.
Here,  $s_\al$ is as in \eqref{exreg}.

\end{theorem}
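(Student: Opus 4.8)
The plan is to set up a standard fixed-point argument for the Duhamel equation \eqref{SNLW5}, viewing the enhanced data set $\Xi = (u_0, u_1, \<1>, \<20>)$ as given, with $\<1> \in C_T W^{-\al-\eps,\infty}_x$ and $\<20> \in C_T W^{s_\al-\eps,\infty}_x$ (Lemma~\ref{LEM:stoconv} and Proposition~\ref{PROP:sto1}). First I would fix a target regularity $\s$ slightly above $\al$, and work in the ball
\begin{align*}
B_R = \big\{ v \in C([0,T]; H^\s(\T^2)) : \| v \|_{C_T H^\s_x} \le R \big\}
\end{align*}
equipped with the $C_T H^\s_x$ metric. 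Define the map
\begin{align*}
\Gamma v = S(t)(u_0, u_1) - \I(v^2 + 2 v\<1>) - \<20>,
\end{align*}
and show that for suitable $R$ and small $T = T(\o)$, $\Gamma$ maps $B_R$ into itself and is a contraction. Since $\<20> \in C_T W^{s_\al - \eps,\infty}_x \hookrightarrow C_T H^\s_x$ for $\eps$ small (as $s_\al = \frac54 - 2\al > \al$ when $\al < \frac5{12}$), the inhomogeneous term is harmless, and the linear evolution $S(t)(u_0,u_1)$ is bounded in $\H^\s$ by unitarity.

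The core of the argument is the nonlinear estimate. By Lemma~\ref{LEM:energy}, it suffices to bound $\| v^2 + 2v\<1> \|_{L^1_T H^{\s-1}_x}$. For the $v^2$ term, since $\s > \al > 0$, $H^\s(\T^2)$ is an algebra up to a logarithmic loss handled by taking $\s > 0$; more precisely $\| v^2 \|_{H^{\s-1}} \lesssim \| v^2\|_{H^\s} \lesssim \| v\|_{H^\s}^2$ by Lemma~\ref{LEM:bilin}(i) (or Sobolev, as remarked after Lemma~\ref{LEM:energy}), giving a factor of $T$ after time integration. For the crucial product $v\<1>$, I would use the paraproduct decomposition \eqref{para1}: the paraproducts $v \pl \<1>$ and $v \pg \<1>$ are controlled by Lemma~\ref{LEM:para}(i), landing in $H^{\s-1}$ since $\s - 1 < -\al$ is false — rather, one checks $v \pl \<1> \in B^{-\al-\eps}_{\cdot}$ and $v\pg\<1> \in B^{\s - \al - \eps}_{\cdot}$, both of which embed into $H^{\s-1}$ for $\s$ close enough to $\al$. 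The resonant term $v \pe \<1>$ requires $\s + (-\al - \eps) > 0$, i.e. $\s > \al + \eps$; this is where the condition $s > \al$ (hence we may pick $\s \in (\al, s)$) and the requirement that $\eps$ be small are used. Crucially, no term like $\<20> \pe \<1>$ ever appears here because we never expand $v$ further; this is exactly the point of working at the Duhamel level rather than with \eqref{SNLW4}. The difference estimate $\| \Gamma v_1 - \Gamma v_2 \|$ follows the same pattern using $v_1^2 - v_2^2 = (v_1 + v_2)(v_1 - v_2)$ and $v_1 \<1> - v_2 \<1> = (v_1 - v_2)\<1>$, yielding a contraction factor $\lesssim T^\ta (R + \|\<1>\|_{C_T W^{-\al-\eps,\infty}_x})$ for some $\ta > 0$, which is made $< 1$ by shrinking $T$.

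Continuity in the enhanced data set $\Xi \in \mathcal{X}^{s,\eps}_T$ follows from a routine variant of the contraction estimates: given two data sets $\Xi, \wt\Xi$ with associated solutions $v, \wt v$, subtracting the two fixed-point equations and applying the same multilinear bounds shows $\| v - \wt v \|_{C_T H^\s_x} \lesssim \| \Xi - \wt\Xi \|_{\mathcal{X}^{s,\eps}_T}$ on a common (random) time interval, after possibly shrinking $T$ uniformly over data sets in a bounded neighborhood. The stopping time $T(\o)$ is then defined via the almost-sure finiteness of $\| \<1> \|_{C_T W^{-\al-\eps,\infty}_x}$ and $\| \<20> \|_{C_T W^{s_\al-\eps,\infty}_x}$ (which hold for every $T$ by Lemma~\ref{LEM:stoconv} and Proposition~\ref{PROP:sto1}): choose $T$ so that these norms times the relevant powers are below the thresholds dictated by the contraction. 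The main obstacle — really the only non-bookkeeping point — is verifying that the resonant product $v \pe \<1>$ closes, i.e.\ that we can simultaneously arrange $\s > \al$ (for the product $v\<1>$ to make sense deterministically), $\s$ close enough to $\al$ that $s_\al - \eps > \s$ (so $\<20>$ fits in $H^\s$), and $\eps$ small enough that none of the Besov losses break these inequalities; all three are compatible precisely in the stated range $\frac13 \le \al < \frac5{12}$, which is where the $\frac14$ multilinear gain of Proposition~\ref{PROP:sto1} is essential.
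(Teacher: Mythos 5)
Your proposal is correct and follows essentially the same route as the paper's proof: a contraction argument for $\Gamma$ at the level of the Duhamel formulation \eqref{SNLW5}, based on the energy estimate (Lemma~\ref{LEM:energy}), the key observation that $s_\al = \frac 54 - 2\al > \al$ for $\al < \frac 5{12}$ so that $\<20>$ fits into $C_TH^\s_x$, and a product estimate for $v\<1>$ valid for $\al < \s < 1-\al$ (the paper invokes Lemma~\ref{LEM:bilin}\,(ii) where you use the paraproduct decomposition; the two are interchangeable here). One small correction: $H^\s(\T^2)$ is not an algebra for $\s<1$, so the chain $\|v^2\|_{H^{\s-1}} \lesssim \|v^2\|_{H^{\s}} \lesssim \|v\|_{H^\s}^2$ fails for $\s$ close to $\al<\frac5{12}$; the Sobolev route you mention parenthetically, namely $\|v^2\|_{H^{\s-1}} \lesssim \|v^2\|_{L^{2/(2-\s)}} = \|v\|_{L^{4/(2-\s)}}^2 \lesssim \|v\|_{H^\s}^2$, is the one actually needed and is what the paper uses in \eqref{xLWP1}.
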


When $\frac 5{12} \leq \al <  \frac{1}{2}$, 
we use 
 the second order expansion \eqref{X1}
and study 
the equation~\eqref{SNLW4}
satisfied by 
the residual term $ v =  u - \<1> + \<20>$. 
With the paraproduct decomposition \eqref{para1}, 
we write \eqref{SNLW4} as 
\begin{equation} \label{SNLW7}
\begin{cases}
\dt^2v + (1-\Dl)v =
-v^2 - 2v (\<1>-\<20>) - \<20>^2 + 2 (\<20> \pl \<1> + \<21p> + \<20> \pg \<1>)   \\
(v, \dt v) |_{t = 0} = (u_0, u_1).
\end{cases}
\end{equation}

\noi
We now state  local well-posedness of the perturbed SNLW \eqref{SNLW7}
for the entire range $0 < \al < \frac 12$.

\begin{theorem}\label{THM:LWPv}
Let $0<\al<\frac 12$ and $s> \al$.
Then,
the Cauchy problem \eqref{SNLW7} is locally well-posed
in $\H^s(\T^2)$.
More precisely, 
given any $(u_0, u_1) \in \H^s(\T^2)$, 
there exists an almost surely positive stopping time $T =T(\o)$
such that there exists  a unique solution $v \in C([0, T];H^\s(\T^2))$ to \eqref{SNLW7}, 
where $\s \leq s$ and $\al < \s <1-\al$.
Furthermore, we have the following continuous dependence statements
for some small $\eps = \eps(\al, s) > 0$.

\begin{itemize}
\item[\textup{(i)}]
For $0 < \al < \frac 5{12}$, 
the solution $v$ 
depends  continuously 
on the enhanced data set\textup{:}
\begin{align}
\Xi = \big(u_0, u_1, \<1>, \<20>\big)
\notag
\end{align}

\noi
almost surely belonging to 
 the class $\mathcal{X}^{s, \eps}_T$ defined \eqref{Edata1}.

\smallskip
\item[\textup{(ii)}]
For $\frac 5{12}\leq  \al < \frac 12$, 
the solution $v$ 
depends  continuously 
on the enhanced data set\textup{:}
\begin{align}
\Xi = \big(u_0, u_1, \<1>, \<20>,  \<21p>\big)
\label{data4}
\end{align}

\noi
almost surely belonging to 
 the class\textup{:}
\begin{align*}
\mathcal{Y}^{s, \eps}_T
& = \H^s(\T^2) \times 
C([0,T]; W^{-\al - \eps, \infty}(\T^2)) \notag\\
& \hphantom{X}
\times 
C([0,T]; W^{s_\al - \eps, \infty}(\T^2)) 
\times
C([0,T]; W^{s_\al-\al - \eps, \infty}(\T^2)).
\end{align*}

\end{itemize}

\end{theorem}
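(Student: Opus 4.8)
The strategy is a standard contraction mapping argument for the integral (Duhamel) formulation of \eqref{SNLW7}, run in the Banach space $C([0,T];H^\s(\T^2))$ with $\s$ in the indicated range, treating the enhanced data set $\Xi$ as a fixed deterministic input whose regularity is supplied by Lemma~\ref{LEM:stoconv}, Proposition~\ref{PROP:sto1}, and Proposition~\ref{PROP:sto2}. Writing the Duhamel map
\begin{align*}
\Gamma_\Xi(v)(t) = S(t)(u_0,u_1)
- \I\big(v^2 + 2v(\<1> - \<20>) + \<20>^2\big)(t)
+ 2\I\big(\<20>\pl\<1> + \<21p> + \<20>\pg\<1>\big)(t),
\end{align*}
I would show $\Gamma_\Xi$ is a contraction on a closed ball of $C([0,T];H^\s)$ of fixed radius, for $T = T(\o)$ small (a stopping time because the norms of the stochastic objects on $[0,T]$ are almost surely finite). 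The main tool is the energy estimate of Lemma~\ref{LEM:energy}, which converts everything to bounding each term of the nonlinearity in $L^1_T H^{\s-1}_x$, together with the product estimates of Lemma~\ref{LEM:para} and Lemma~\ref{LEM:bilin}, and Sobolev embedding on $\T^2$. The time integration supplies a positive power of $T$ on each nonlinear term, which both closes the ball and produces the contraction factor.

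First I would fix the functional framework: choose $\s$ with $\al < \s < 1-\al$ and $\s \le s$, and record the regularities to be used — $\<1>\in C_T W^{-\al-\eps,\infty}$, $\<20>\in C_T W^{s_\al-\eps,\infty}$, and (in case (ii)) $\<21p>\in C_T W^{s_\al-\al-\eps,\infty}$. Then I would estimate the nonlinearity term by term in $H^{\s-1}$. The purely deterministic quadratic term $v^2$ and the linear-in-$v$ terms $v\<20>$, $v^2$-type terms are handled by Lemma~\ref{LEM:bilin} and the algebra/embedding properties of $H^\s(\T^2)$ for $\s>0$; the term $2v\<1>$ is the place where the constraint $\s+(-\al)>0$, i.e. $\s>\al$, is used so that the product $v\cdot\<1>$ makes sense — this forces $\al<\frac12$ since we also need $\s<1-\al$ for the resulting object to have regularity $>\s-1$. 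The term $\<20>^2$ needs $s_\al>0$, which holds on $0<\al<\frac12$. The paraproducts $\<20>\pl\<1>$ and $\<20>\pg\<1>$ are always defined (Lemma~\ref{LEM:para}(i)) with regularity $\min(-\al,\,s_\al-\al) = -\al-$, comfortably above $\s-1$. For the resonant piece $\<20>\pe\<1>$ I would split by the value of $\al$: when $0<\al<\frac5{12}$ we have $s_\al + (-\al) = s_\al-\al>0$, so the resonant product is defined \emph{deterministically} by Lemma~\ref{LEM:para}(i) directly from $\<1>$ and $\<20>$ — this is exactly why the smaller data set $\mathcal X^{s,\eps}_T$ suffices in part~(i); when $\frac5{12}\le\al<\frac12$ the deterministic estimate fails and we instead take $\<21p>$ as a \emph{given} component of the enhanced data set, with the regularity from Proposition~\ref{PROP:sto2}, which is again above $\s-1$ — this gives part~(ii).

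Having the quantitative bound $\|\Gamma_\Xi(v)\|_{C_T H^\s} \le C\|(u_0,u_1)\|_{\H^s} + C T^{\ta}\,\|\Xi\|^{\ast} (1 + \|v\|_{C_T H^\s})^2$ for some $\ta>0$ and a suitable norm $\|\Xi\|^\ast$ of the enhanced data, and the analogous Lipschitz bound $\|\Gamma_\Xi(v_1)-\Gamma_\Xi(v_2)\|_{C_T H^\s}\le C T^\ta(\cdots)\|v_1-v_2\|_{C_T H^\s}$, a standard argument gives a unique fixed point $v$ on a ball of radius $\sim\|(u_0,u_1)\|_{\H^s}$ once $T$ is chosen small depending on the ($\o$-dependent, a.s.\ finite) enhanced-data norm; then $T=T(\o)$ is a stopping time by adaptedness of the stochastic objects. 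Continuous dependence of $v$ on $\Xi$ in the respective classes $\mathcal X^{s,\eps}_T$ (case (i)) and $\mathcal Y^{s,\eps}_T$ (case (ii)) follows from the same multilinear estimates applied to the difference of the Duhamel maps with two different data sets, a routine consequence of the contraction being uniform in $\Xi$ on bounded sets. The only genuinely delicate points — and the ones I would be most careful about — are (a) verifying that every term truly lands in $H^{\s-1}$ with room to spare for the whole window $0<\al<\frac12$, which pins down the admissible $\s$ to the open interval $(\al,1-\al)$ and is where the restriction $\al<\frac12$ is consumed; and (b) the bookkeeping of the $\eps$'s so that the finitely many regularity losses (one per use of Lemma~\ref{LEM:para} or Lemma~\ref{LEM:bilin}) still leave a net positive margin. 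Everything else is routine.
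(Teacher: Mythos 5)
Your proposal is correct and follows essentially the same route as the paper: a contraction argument in $C([0,T];H^\s(\T^2))$ for the Duhamel map, with the energy estimate (Lemma~\ref{LEM:energy}) reducing matters to $H^{\s-1}$ bounds, Lemma~\ref{LEM:bilin} handling $v\,(\<1>-\<20>)$ under $\al<\s<1-\al$, and the resonant product $\<20>\pe\<1>$ treated deterministically via $s_\al-\al>0$ for $\al<\tfrac{5}{12}$ and supplied by Proposition~\ref{PROP:sto2} as part of the enhanced data for $\tfrac{5}{12}\le\al<\tfrac12$. The term-by-term bounds you outline match those in the paper's Subsection~\ref{SUBSEC:LWPv}.
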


In Subsection \ref{SUBSEC:LWPv}, 
we present a proof of Theorem \ref{THM:LWPv}.
In view of the pathwise regularities of the relevant stochastic terms, 
we simply build a continuous map, 
sending the enhanced data set $\Xi$
to a solution $v$ in the deterministic manner.

\begin{remark}\rm
If we take $\Xi = \big(u_0, u_1, \<1>, \<20>,  \<21p>\big)$ as the enhanced data set, 
then in order to prove  Theorem \ref{THM:LWPv}, 
it is enough to 
have $\<21p> \in C([0,T];W^{-\al-,\infty}(\T^2))$ almost surely.
Namely, we do not need to exploit 
the extra multilinear smoothing for $\<21p>$.
See Remark \ref{REM:21reg} for a further discussion.
While it is possible to replace 
$\<21p>$ in \eqref{data4} by 
$\<21> : =  \<20> \pl \<1> + \<21p> + \<20>\pg \<1>$, 
we chose not to do so in order to emphasize the fact that 
the resonant product $\<21p>$ is the only term
which needs to be defined a priori.
(As mentioned above, 
given $\<20>$ and $ \<1>$, 
the paraproducts
$\<20> \pl \<1>$ and $\<20>\pg \<1>$
are well-defined distributions.)


We point out, however, that, in Theorem~\ref{THM:LWPv}\,(i),  the extra smoothing on $\<20>$
plays an essential role
in making sense of the product $\<20> \<1>$ in the deterministic manner in the range
$0 < \al < \frac 5{12}$.

\end{remark}

We conclude this subsection by 
presenting a proof of Theorem \ref{THM:WP}.

\begin{proof}[Proof of Theorem \ref{THM:WP}]
We only consider the case $\frac 5{12}\leq \al<\frac 12$ and $s>\al$.
For $(u_0,u_1) \in \H^s(\T^2)$ and $N \in \N$, we set 
\[\Xi_N:= \big(u_0, u_1, \<1>_N, \<20>_N,  \<21p>_N\big).\]

\noi
By Theorem \ref{THM:LWPv}\,(ii), there exists a unique local-in-time solution $v_N \in C([0,T]; H^{\s}(\T^2))$ to~\eqref{SNLW7} with the enhanced data set $\Xi_N$, 
where 
 $\al < \s <1-\al$.
Then, we see that 
\[
u_N =v_N+ \<1>_N-\<20>_N
\]
satisfies the renormalized SNLW \eqref{SNLW10}.

It follows 
from Lemma \ref{LEM:stoconv} and Propositions \ref{PROP:sto1} and \ref{PROP:sto2}
 that $\Xi_N$ converges almost surely to 
 $\Xi$ in \eqref{data4} 
 with respect to the  $\mathcal{Y}^{s, \eps}_T$-topology.
In particular, 
 Theorem \ref{THM:LWPv}\,(ii) implies that the sequence $v_N = u_N - \<1>_N+\<20>_N$ converges to $v \in C([0,T]; H^{\s}(\T^2))$.
Then, we conclude that $u_N$ converges to
\[
u= v+\<1>-\<20> 
\]

\noi
in $C([0,T]; H^{-\al-\eps}(\T^2))$.
\end{proof}

\subsection{Proof of Theorem \ref{THM:LWP}}
\label{SUBSEC:LWP}

In the following, we study the Duhamel formulation \eqref{SNLW5}.
Let $\frac 13 \le \al < \frac 5{12}$ and
 $0 < T\leq 1$ and fix $\eps > 0$ sufficiently small.
Define a map $\G$ by 
\begin{align*}
\G (v) (t) &= S(t) (u_0, u_1) 
 - \I \big( v^2 + 2v \<1>)(t) - \<20>(t)
\end{align*}

\noi
where $S(t)$ and $\I$ are as in  \eqref{LW3a} and  \eqref{LW4}.
In the following, we take $\al < \s\leq  s$.

By the energy estimate (Lemma \ref{LEM:energy}), we have 
\begin{align}
\begin{split}
\| \G (v) \|_{L_T^\infty H_x^\s}
&\les \| (u_0,u_1) \|_{\H^s} + T \Big( \| v^2 \|_{L_T^\infty H_x^{\s-1}} 
+ \| v \<1> \|_{L_T^\infty H_x^{\s-1}}\Big)  
+ \| \<20> \|_{L_T^\infty H_x^{\s}}.
\end{split}
\label{xLWP0}
\end{align}

\noi
By Sobolev's inequality, we have
\begin{align}
\| v^2 \|_{L_T^\infty H_x^{\s-1}}
\les \| v^2 \|_{L_T^\infty L_x^{\frac{2}{2-\s}}}
= \| v \|_{L_T^\infty L_x^{\frac{4}{2-\s}}}^2
\les \| v \|_{L_T^\infty H_x^\s}^2
\label{xLWP1}
\end{align}

\noi
for  $0<\s<1$.
From 
 Lemmas~\ref{LEM:bilin} and~\ref{LEM:stoconv},  
we have
\begin{align}
\begin{split}
\| v\<1> \|_{L_T^\infty H_x^{\s-1}}
&\les \| \jb{\nb}^{-\al-\eps} (v\<1>) \|_{L_T^\infty L_x^2} \\
&\les \| \jb{\nb}^{\al+\eps} v \|_{L_T^{\infty} L_x^2} \| \jb{\nb}^{-\al-\eps} \<1> \|_{L_T^\infty L_x^{\infty}}  \\
&\les C_\o \| v \|_{L_T^\infty H_x^\s} 
\end{split}
\label{xLWP2}
\end{align}

\noi
for some almost surely finite constant $C_\o > 0$,
provided that  $\al<\s<1-\al$.

Then, from 
\eqref{xLWP0}, \eqref{xLWP1}, \eqref{xLWP2}, and Proposition \ref{PROP:sto1}, we obtain
\begin{align}
\| \G (v) \|_{L_T^\infty H_x^\s}
&\les \| (u_0,u_1) \|_{\H^s} 
 + T \Big( \| v \|_{L_T^\infty H_x^\s} + C_\o \Big)^2 + C_\o
\label{xLWP5}
\end{align}

\noi
Note that we need $\s \leq  \frac 54 - 2\al$ in estimating the last term in \eqref{xLWP0}.
This can be guaranteed by taking $\s > \al$ sufficiently close to $\al$
as long as $\al < \frac 5{12}$.
Similarly, we have
\begin{align}
\| \G (v_1) - \G (v_2) \|_{L_T^\infty H_x^\s}
&\les T \Big( \| v_1 \|_{L_T^\infty H_x^\s} + \| v_2 \|_{L_T^\infty H_x^\s} 
+ C_\o \Big) \| v_1-v_2 \|_{L_T^\infty H_x^\s}.
\label{xLWP6}
\end{align}

\noi
Therefore, we conclude from 
 \eqref{xLWP5} and \eqref{xLWP6}
 that a standard contraction argument  yields local  well-posedness of \eqref{SNLW5}.
Moreover, an analogous  computation 
shows  that the solution $v \in C([0,T];H^\s(\T^2))$ depends continuously on
the enhanced data set $\Xi = \big(u_0, u_1, \<1>, \<20>\big)$.
This completes  the proof of Theorem \ref{THM:LWP}.

\subsection{Proof of Theorem \ref{THM:LWPv}}
\label{SUBSEC:LWPv}

Next, we study the perturbed SNLW \eqref{SNLW7}.
Let $\al < \s\leq  s$ and  $0 < T\leq 1$ and fix $\eps > 0$ sufficiently small.
Define a map $\wt \G$ by 
\begin{align*}
\wt \G (v) (t) &= S(t) (u_0, u_1) 
 - \I \big( v^2 + 2v (\<1>-\<20>) + \<20>^2 - 2 \<21> \big)(t), 
\end{align*}

\noi
where $\<21>$ stands for 
\begin{align}
\<21> =  \<20> \pl \<1> +  \<21p> 
+  \<20>\pg \<1>. 
\label{Z1}
\end{align}

\noi
For $0 < \al < \frac 5{12}$, 
we see from \eqref{exreg} that $s_\al - \al > 0$.
Hence, 
Proposition \ref{PROP:sto1} and Lemma~\ref{LEM:stoconv}
with 
Lemma \ref{LEM:para} imply that 
$\<21>  = \<20> \<1>$
belongs to 
$C([0,T]; W^{-\al - \eps, \infty}(\T^2))$, almost surely.
On the other hand, 
for $\frac 5{12} \leq \al < \frac 12$, 
Proposition \ref{PROP:sto2}
implies 
\begin{align}
\<21> \in 
C([0,T]; W^{-\al - \eps, \infty}(\T^2))
\label{Z2}
\end{align}

\noi
almost surely.

By the energy estimate (Lemma \ref{LEM:energy}), we have 
\begin{align}
\begin{split}
\| \wt \G (v) \|_{L_T^\infty H_x^\s}
&\les \| (u_0,u_1) \|_{\H^s} + T \Big( \| v^2 \|_{L_T^\infty H_x^{\s-1}} + \| v (\<1>-\<20>) \|_{L_T^\infty H_x^{\s-1}} \\
&\qquad
+ \| \<20>^2 \|_{L_T^\infty H_x^{\s-1}}
+ \| \<21> \|_{L_T^\infty H_x^{\s-1}}
\Big).
\end{split}
\label{LWP0}
\end{align}

\noi
Proceeding as in \eqref{xLWP2}
with Proposition \ref{PROP:sto1},  
we have
\begin{align}
\begin{split}
\| v(\<1>-\<20>) \|_{L_T^\infty H_x^{\s-1}}
&\les \| \jb{\nb}^{-\al-\eps} (v(\<1>-\<20>)) \|_{L_T^\infty L_x^2} \\
&\les \| \jb{\nb}^{\al+\eps} v \|_{L_T^{\infty} L_x^2} \| \jb{\nb}^{-\al-\eps} (\<1>-\<20>) \|_{L_T^\infty L_x^{\infty}}  \\
&\les C_\o \| v \|_{L_T^\infty H_x^\s} 
\end{split}
\label{LWP2}
\end{align}

\noi
for some almost surely finite constant $C_\o > 0$,
provided that  $\al<\s<1-\al$.
From Proposition \ref{PROP:sto1}, we also have\begin{align}
&\| \<20>^2 \|_{L_T^\infty H_x^{\s-1}} 
\le \| \<20> \|_{L_T^\infty L_x^\infty}^2  \leq C_\o.
\label{LWP3}
\end{align}

Putting together 
\eqref{LWP0},  \eqref{LWP2}, and \eqref{LWP3} with  \eqref{xLWP1} and \eqref{Z2},  we obtain
\begin{align*}
\| \wt \G (v) \|_{L_T^\infty H_x^\s}
&\les \| (u_0,u_1) \|_{\H^s} 
 + T \Big( \| v \|_{L_T^\infty H_x^\s} + C_\o \Big)^2.
\end{align*}

\noi
Similarly, we have
\begin{align*}
\| \wt \G (v_1) - \wt \G (v_2) \|_{L_T^\infty H_x^\s}
&\les T \Big( \| v_1 \|_{L_T^\infty H_x^\s} + \| v_2 \|_{L_T^\infty H_x^\s} 
+ C_\o \Big) \| v_1-v_2 \|_{L_T^\infty H_x^\s}.
\end{align*}

\noi
The rest follows as in the previous subsection.
This completes  the proof of Theorem \ref{THM:LWPv}.

\section{On the construction of the relevant stochastic objects}
\label{SEC:sto1}

In this section, we 
go over the construction 
of the stochastic terms for SNLW \eqref{SNLW1}.
As in~\cite{GKO2}, our strategy is to estimate
the second moment of the Fourier coefficient
and apply Lemma \ref{LEM:reg}.
In Subsection \ref{SUBSEC:31},  we briefly 
discuss the regularity and convergence properties
of  $\<1>$ and $\<2>$
(Lemma~\ref{LEM:stoconv}).
By exploiting multilinear dispersive smoothing
for  $\<20>$, 
we then present a proof of Proposition~\ref{PROP:sto1}
in Subsection~\ref{SUBSEC:32}.
In Subsection \ref{SUBSEC:33}, 
we establish  analogous multilinear smoothing for  $\<21p>$ (Proposition \ref{PROP:sto2}).
Lastly, in Subsection \ref{SUBSEC:34}, 
we show that,
when $\al \geq \frac 12$, 
the second order stochastic term  $\<20> (t)$ is not a spatial distribution almost surely 
for any $t >0$ (Proposition \ref{PROP:stodiv}).

Let  $\<1> = \I(\jb{\nb}^\al \xi)$ 
be the stochastic convolution  defined in \eqref{stoc1}.
Given $n \in \Z^2$ and  $0 \leq t_2\leq t_1$, 
we define $\s_{n}(t_1, t_2 )$ by 
\begin{align}
\begin{split}
\s_{n}(t_1, t_2 )  
 : \! & =
\E  \big[  \ft{\<1>}(n, t_1)  \,  \ft{\<1>}(-n, t_2) \big]
  =
   \int_0^{t_2} \frac{\sin ((t_1 - t')\jb{n} )}{\jb{n}^{1-\al}}
   \frac{\sin ((t_2 - t') \jb{n})}{\jb{n}^{1-\al}} d t' \\
&  =    \frac{\cos((t_1 - t_2)\jb{n}) }{2 \jb{n}^{2(1-\al)}} t_2
+  \frac{\sin ((t_1-t_2)  \jb{n})}{4 \jb{n}^{3-2\al} }
-  \frac{\sin ((t_1 +  t_2) \jb{n})}{4 \jb{n}^{3-2\al} }.
\end{split}
\label{sigma2}
\end{align}

\noi
Then, from 
\eqref{sigma1} and \eqref{sigma2}, we have
\begin{align}
\s_N(t) = \frac 1{4\pi^2} \sum_{|n| \le N} \s_n(t,t).
\label{sigma3}
\end{align}

\noi
Moreover, from Wick's theorem (Lemma \ref{LEM:Wick}), we have
\begin{align}
\E \Big[\big(  |\ft{\<1>}(n_1, t_1)|^2 -  \s_{n_1}(t_1, t_1)\big)\big( |\ft{\<1>}(n_2, t_2)|^2 -  \s_{n_2}(t_2, t_2)\big)\Big] 
 =  \ind_{n_1=\pm n_2} \cdot  \s_{n_1}^2(t_1,t_2).
\label{X3}
\end{align}

\noi
In the following, we fix $T>0$.

\subsection{Proof of Lemma \ref{LEM:stoconv}}
\label{SUBSEC:31}

(i) 
From \eqref{sigma2}, we have
\begin{align}
\E\big[|\ft{\<1>}_N(n, t)|^2\big] 
= \s_n (t,t)
\les_T \jb{n}^{-2 + 2\al}
\label{sconv3}
\end{align}

\noi
for any $n \in \Z^2$ and   $0 \le t \le T$, uniformly in $N \in \N$.
Also, by the mean value theorem and an interpolation argument as in \cite{GKO2}, 
we have 
\[
\E\big[ |\ft{\<1>}_N(n, t_1) - \ft{\<1>}_N(n, t_2)|^2 \big]
\les_T \jb{n}^{-2(1-\al)+\theta} |t_1-t_2|^\theta
\]
for any $\theta \in [0, 1]$, $n \in \Z^2$,  and $0 \le t_2 \le t_1 \le T$ with $t_1-t_2 \le 1$,
uniformly in $N \in \N$.
Hence, from Lemma \ref{LEM:reg}, 
we conclude that 
 $\<1>_N \in C([0, T]; W^{-\al - \eps, \infty}(\T^2))$
for any $\eps > 0$, 
 almost surely.
Moreover, a slight modification
of the argument  yields
that $\{ \<1>_N \}_{N \in \N}$ is almost surely
 a Cauchy sequence in $C([0, T]; W^{-\al - \eps, \infty}(\T^2))$,
  thus converging to some limit $\<1>$.
Since the required modification is exactly the same as in  \cite{GKO2}, 
we omit the details here.

In the remaining part of this section, 
we only establish 
the estimate \eqref{reg1} in Lemma~\ref{LEM:reg}
for each of $\<2>_N$, $\<20>_N$, and $\<21p>_N$, 
uniformly in $N \in \N$.
The time difference estimate \eqref{reg3}
and the convergence claim 
follow from a straightforward modification
as in \cite{GKO2}.

\smallskip

\noi
(ii)
Next, we study the Wick power $\<2>_N$.
In view of Lemma \ref{LEM:reg} and the comment above, it suffices to prove
\begin{align}
\E\big[|\ft{\<2>}_N(n, t)|^2\big]
\les_T \jb{n}^{-2+4\al}
\label{2var}
\end{align}
for $n \in \Z^2$ and $0 \le t \le T$,
uniformly in $N \in \N$.
From \eqref{so4b} and~\eqref{sigma3}, we have
\begin{align*}
\ft{\<2>}_N(n, t)
&  = \ft{\<1>_N^2}(n, t) - \ind_{n = 0} \cdot 2\pi \s_N(t)\\
 & = \frac{1}{2\pi} \sum_{\substack{n = n_1 + n_2\\ |n_1|, |n_2| \le N}} 
 \Big(\ft{\<1>}(n_1, t)\ft{\<1>}(n_2, t)
- \ind_{n = 0} \cdot \s_{n_1}(t,t) \Big) 
\end{align*}

\noi
and thus we have
\begin{align}
\begin{split}
\E\big[|\ft{\<2>}_N(n, t)|^2\big] 
& = \frac{1}{4\pi^2} \sum_{\substack{n = n_1 + n_2\\ |n_1|, |n_2| \le N}} 
\sum_{\substack{n = n_1' + n_2' \\ |n_1'|, |n_2'| \le N}}
\E\bigg[ \Big(\ft{\<1>}(n_1, t)\ft{\<1>}(n_2, t)
- \ind_{n = 0} \cdot \s_{n_1}(t,t) \Big) \\
& \hphantom{XXXXXXXXXXX}
\times
\cj{\Big(\ft{\<1>}(n_1', t)\ft{\<1>}(n_2', t)
- \ind_{n = 0} \cdot \s_{n_1'}(t,t) \Big)}\bigg].
\end{split}
\label{V1}
\end{align}

\noi
In order to have non-zero contribution in \eqref{V1}, 
we must have~$n_1 = n_1'$ and $n_2 = n_2'$ up to permutation.

By Wick's theorem (Lemma \ref{LEM:Wick}), we have 
\begin{equation} 
\E \big[ |\ft{\<1>}(n, t)|^4 \big] = 2 \s_{n}^2(t,t).
\label{1mon4}
\end{equation}

\noi
Then, 
for $n=0$, it follows from \eqref{V1}, \eqref{X3}, and \eqref{1mon4} that
\begin{align}
\begin{split}
\E\big[|\ft{\<2>}_N(0, t)|^2\big]
&\les
\sum_{\substack{k \in \Z^2}}
\E\bigg[ \Big(|\ft{\<1>}(k, t)|^2 - \s_k (t,t) \Big)^2 \bigg]
= \sum_{\substack{k \in \Z^2}} \bigg( \E \big[ |\ft{\<1>}(k, t)|^4 \big] - \s_{k}^2(t,t) \bigg) \\
&=\sum_{\substack{k \in \Z^2}} \s_{k}^2(t,t)
\les_T \sum_{k\in \Z^2} \frac{1}{\jb{k}^{4(1-\al)}}<\infty, 
\end{split}
\label{2var1}
\end{align}

\noi
provided that $\al < \frac 12$.
Similarly, for $n \neq 0$, we have
\begin{align}
\begin{split}
\E\big[|\ft{\<2>}_N(n, t)|^2\big] 
&=
\frac 1{2\pi^2}\sum_{\substack{n = n_1 + n_2 \\ n_1 \neq \pm n_2}}
\E\Big[ |\ft{\<1>}_N(n_1, t)|^2 |\ft{\<1>}_N(n_2, t)|^2 \Big]
+\frac 1{4\pi^2}\cdot \ind_{n \in 2 \Z^2\setminus\{0\}} 
\E\Big[ \big|\ft{\<1>}_N \big( \tfrac{n}{2}, t \big) \big|^4 \Big]\\
&= \frac 1{2\pi^2}\sum_{\substack{n = n_1 + n_2 \\ n_1+n_2 \neq 0 \\ |n_1|, |n_2| \le N}} \s_{n_1}(t,t) \s_{n_2}(t,t)\\
& \les_T \sum_{n = n_1 + n_2} \frac{1}{\jb{n_1}^{2(1-\al)} \jb{n_2}^{2(1-\al)}}
\les \jb{n}^{-2+4\al},
\end{split}
\label{2var2}
\end{align}

\noi
provided that $0 < \al < \frac 12$.
In the last inequality, 
we used  Lemma \ref{LEM:SUM}.
This proves \eqref{2var}.

\subsection{Proof of Proposition \ref{PROP:sto1}}
\label{SUBSEC:32}

Let $0<\al<\frac 12$ and let $s_\al$ be as in \eqref{exreg}.
In view of Lemma \ref{LEM:reg}, it suffices to show 
\begin{align}
\E \big[ |\ft{\<20>}_N(n, t)|^2\big] & \les_T \jb{n}^{-2-2s_\al+}
\label{S3a}
\end{align}

\noi
for any $n \in \Z^2$ and  $0 \leq t \leq T$,
uniformly in $N \in \N$.
Our argument follows closely to that in the proof of Proposition 1.6 in \cite{GKO2}
up to Case 2 below, where our argument diverges.
We, however, present details for readers' convenience.
See also Remark \ref{REM:exsmooth} below.

By the definition \eqref{stoc2}, we have
\begin{align}
\E \big[ |\ft{\<20>}_N(n, t)|^2\big]
& = 
 \int_0^t \frac{\sin((t - t_1) \jb{n})}{\jb{n}} 
 \int_0^t \frac{\sin((t - t_2) \jb{n})}{\jb{n}} 
\E\Big[ \ft{\<2>}_N(n, t_1) \cj{\ft{\<2>}_N(n, t_2)} \Big]  dt_2dt_1.
\label{S4}
\end{align}

\noi
Let us first consider the case $n = 0$.
 It follows from 
\eqref{S4} and \eqref{V1} that
\begin{align*}
\E \big[ |\ft{\<20>}_N(0, t)|^2\big]
& = \frac 1{4\pi^2}
 \int_0^t \sin (t - t_1) 
 \int_0^t \sin (t-t_2) \notag \\
& \hphantom{X}
\times
\sum_{\substack{k_1, k_2 \in \Z^2 \\ |k_1|, |k_2| \le N}}
\E\Big[\big( |\ft{\<1>}(k_1, t_1)|^2 -  \s_{k_1}(t_1, t_1)\big)\big( |\ft{\<1>}(k_2, t_2)|^2 -  \s_{k_2}(t_2, t_2)\big)\Big] 
 dt_2dt_1.
\end{align*}

\noi
By symmetry, \eqref{X3}, and \eqref{sigma2}, we obtain
\begin{align}
\E \big[ |\ft{\<20>}_N(0, t)|^2\big]
&\les
\int_0^t 
 \int_0^{t_1} 
\sum_{k \in \Z^2} \s_k^2(t_1,t_2)
 dt_2dt_1 
 \les_T \sum_{k \in \Z^2}\frac{1}{\jb{k}^{4(1-\al)}}
< \infty, 
 \notag
\end{align}
provided that $\al<\frac 12$.
This proves \eqref{S3a} when $n = 0$.

In the following, we consider the case  $n \ne 0$.
With \eqref{V1} and proceeding as in \eqref{2var2}, 
we have
\begin{align}
\E \big[ |\ft{\<20>}_N(n, t)|^2\big]
& = \frac 1{\pi^2}  \sum_{\substack{n = n_1 + n_2\\n_1 \ne \pm n_2 \\ |n_1|, |n_2| \le N}} \int_0^{t} \frac{\sin((t - t_1) \jb{n})}{\jb{n}} 
\notag \\
& \hphantom{XXXXXXX}
 \times 
 \int_0^{t_1} \frac{\sin((t - t_2) \jb{n})}{\jb{n}} 
\s_{n_1}(t_1, t_2) \s_{n_2}(t_1, t_2)  dt_2dt_1\notag\\
& \hphantom{X}
+ \frac 1{2\pi^2}\cdot \ind_{n \in 2 \Z^2\setminus\{0\}} 
 \int_0^t \frac{\sin((t - t_1) \jb{n})}{\jb{n}} 
 \int_0^{t_1} \frac{\sin((t - t_2) \jb{n})}{\jb{n}} 
\notag\\
& \hphantom{XXXXXXX}
\times \E\Big[ \ft{\<1>}_N \big(\tfrac{n}{2}, t_1\big)^2 \, \cj{\ft{\<1>}_N \big(\tfrac{n}{2}, t_2\big)^2 } \Big]  dt_2dt_1
\notag\\
& =: 
\1 (n, t)+ \II(n, t), 
\label{S5}
\end{align}

\noi
where $\II(n, t)$ denotes the contribution from $n_1 = n_2 = n_1' = n_2' = \frac n2$.

We first estimate the second term $\II(n, t)$ in \eqref{S5}.
By Wick's theorem (Lemma \ref{LEM:Wick}) with~\eqref{sigma2}, 
we have
\begin{align*}
 \bigg|\E\Big[ \ft{\<1>}\big(\tfrac{n}{2}, t_1\big)^2 & \, \cj{\ft{\<1>}\big(\tfrac{n}{2}, t_2\big)^2 } \Big] \bigg|
\les_T \jb{n}^{-4(1-\al)}
\end{align*}

\noi
under $0 \le t_2 \le t_1 \leq t \leq T$.
Hence, from  \eqref{S5}
with \eqref{exreg},  we conclude that 
\begin{align}
|\II(n, t)|\les_T \jb{n}^{-6+4\al}
\le \jb{n}^{-2-2s_\al},
\label{X3a}
\end{align}

\noi
verifying \eqref{S3a}.

Next, we estimate $\1(n, t)$ in \eqref{S5}.
As in \cite{GKO2}, we have
\begin{align}
\begin{split}
\1 (n, t) & = 
 - \frac 1{4\pi^2} \sum_{k_1, k_2 \in \{1, 2\}} \sum_{\eps_1, \eps_2 \in \{-1, 1\}}
\frac{\eps_1\eps_2e^{i (\eps_1+\eps_2) t\jb{n}}}{\jb{n}^2}
\sum_{\substack{n = n_1 + n_2\\n_1 \ne \pm n_2 \\ |n_1|, |n_2| \le N}}
\int_0^{t} 
e^{-i \eps_1t_1\jb{n}} \\
& \hphantom{XX}
\times  \int_0^{t_1} e^{-i \eps_2t_2\jb{n}}
\prod_{j = 1}^2\s_{n_j}^{(k_j)}(t_1, t_2)
\,   dt_2dt_1 =: \sum_{k_1, k_2 \in \{1, 2\}} \1^{(k_1,k_2)} (n, t),
\end{split}
\label{S6a}
\end{align}

\noi
where 
$\s_{n}^{(1)}(t_1, t_2)$
and $\s_{n}^{(2)}(t_1, t_2)$ are defined by 
\begin{align}
\s_{n}^{(1)}(t_1, t_2) &  :=  
 \frac{\cos((t_1 - t_2)\jb{n}) }{2 \jb{n}^{2(1-\al)}} t_2, 
 \label{S61}
\\ 
\s_{n}^{(2)}(t_1, t_2)  & := 
  \frac{\sin ((t_1-t_2)  \jb{n})}{4\jb{n}^{3-2\al} }
-  \frac{\sin ((t_1 +  t_2) \jb{n})}{4\jb{n}^{3-2\al} }
\label{S62}
\end{align}

\noi
such that 
$ \s_{n}(t_1, t_2) =  \s_{n}^{(1)}(t_1, t_2) + \s_{n}^{(2)}(t_1, t_2)$.
%

By Lemma \ref{LEM:SUM}, 
the contribution to $\1(n, t)$ in~\eqref{S6a}
from  $(k_1, k_2) \ne (1, 1)$ can be estimated by 
\[
\frac{1}{\jb{n}^2}\sum_{n = n_1 + n_2} \frac{1}{\jb{n_1}^{2(1-\al)} \jb{n_2}^{3-2\al}}
\les \jb{n}^{ - 2 -2(1-\al)+}
\]

\noi
for $0<\al<\frac 12$, verifying \eqref{S3a}.
Hence, we focus on estimating 
 $\1(n, t)$ coming from 
 $(k_1, k_2) = (1, 1)$:
\begin{align}
\begin{split}
\1^{(1,1)} (n, t)
& : = -\frac 1{64\pi^2} \sum_{\eps_1, \eps_2, \eps_3, \eps_4 \in \{-1, 1\}}
\sum_{\substack{n = n_1 + n_2\\n_1 \ne \pm n_2 \\ |n_1|, |n_2| \le N}}
\frac{\eps_1\eps_2e^{i (\eps_1+\eps_2) t\jb{n}}}{\jb{n}^2\jb{n_1}^{2(1-\al)} \jb{n_2}^{2(1-\al)}}\\
& \hphantom{XXXX}\times
\int_0^{t} 
e^{-i t_1 \kk_1( \bar n)}
 \int_0^{t_1} 
t_2^2   e^{-i t_2\kk_2(\bar n)}
 \, dt_2dt_1, 
\end{split}
\label{X4}
\end{align}

\noi
where $\kk_1( \bar n)$ and $\kk_2( \bar n)$ are defined by 
\begin{align}
\kk_1(\bar n) & =
 \eps_1 \jb{n} -\eps_3 \jb{n_1} - \eps_4\jb{n_2} 
 \qquad \text{and}\qquad 
\kk_2(\bar n)  = 
 \eps_2 \jb{n} +\eps_3 \jb{n_1} + \eps_4\jb{n_2}.
 \label{kk1}
\end{align}

\noi
When $|n| \les 1$, 
it follows from Lemma \ref{LEM:SUM}
that $|\1^{(1,1)} (n, t)|\les_T 1$  for $0<\al<\frac 12$.
Hence, we assume $|n|\gg 1$.
As in \cite{GKO2}, 
we must carefully estimate $\1^{(1,1)} (n, t)$, 
depending on   $\bar \eps = (\eps_1, \eps_2, \eps_3, \eps_4)$, 
by  exploiting either (i) the dispersion (= oscillation) 
or (ii) smallness of the measure of the relevant frequency set.

Fix our choice of $\bar \eps = (\eps_1, \eps_2, \eps_3, \eps_4)$ 
and denote by $\1^{(1,1)}_{\bar \eps}(n,t)$ the associated contribution to $\1^{(1,1)}(n,t)$.
By switching the order of integration and first integrating in $t_1$, we have 
\begin{align}
\begin{split}
\bigg|\int_0^{t} 
& e^{-i t_1 \kk_1( \bar n)}
 \int_0^{t_1} 
t_2^2   e^{-i t_2\kk_2(\bar n)}
 \, dt_2dt_1\bigg| \\
& = \bigg| \int_0^{t} 
t_2^2   e^{-i t_2\kk_2(\bar n)}
\frac{e^{-i t \kk_1( \bar n)} - e^{-i t_2 \kk_1( \bar n)}}{-i \kk_1(\bar n)}
 \, dt_2\bigg| \les_T (1+ |\kk_1(\bar n)|)^{-1}.
\end{split}
\label{X5}
\end{align}

\noi
From \eqref{X4} and \eqref{X5},  we have 
\begin{align}
|\1^{(1,1)}_{\bar \eps} (n, t)|
& \les_T   
\sum_{n = n_1 + n_2} 
\frac{1}{\jb{n}^2\jb{n_1}^{2(1-\al)} \jb{n_2}^{2(1-\al)}(1+ |\kk_1(\bar n)|)}.
\label{SS1}
\end{align}

\noi
In the following, 
we assume $|n_1|\geq |n_2|$
without loss of generality.
Under $n = n_1 + n_2$
 we then have 
\begin{align}
\jb{n_1} \sim \jb{n}+ \jb{n_2}.
\label{SS1a}
\end{align}

When $(\eps_1, \eps_3, \eps_4) = (\pm 1, \mp1, \mp1)$ or
$(\pm1, \mp1, \pm1)$, 
we have
$|\kk_1(\bar n)| \geq \jb{n}$.
Then, the desired bound \eqref{S3a} follows from \eqref{SS1}
and Lemma \ref{LEM:SUM}.

Next, 
we  consider the case $(\eps_1, \eps_3, \eps_4) = (\pm 1, \pm1, \mp1)$.
In this case, we have
$|\kk_1(\bar n)|=  \jb{n} + \jb{n_2} -\jb{n_1}$.
By \eqref{SS1},
the contribution to $\1^{(1,1)}_{\bar \eps} (n, t)$ from $n_2=0$ is estimated by
\begin{align*}
\frac 1{\jb{n}^2 \jb{n}^{2(1-\al)}}
= \jb{n}^{-4+2\al}
\le \jb{n}^{-2-2s_\al},
\end{align*}

\noi
satisfying \eqref{S3a}.
Hence, 
we assume  $n_2 \neq 0$
in the following.
By viewing  $n_1$ as a vector based at $n_2$, 
we see that 
three vectors $n$, $n_1$, and $n_2$ form a triangle.
Hence, it follows from 
the law of cosines that
\begin{align}
|n|^2 + |n_2|^2 - |n_1|^2 = 2 |n| |n_2| \cos \big( \angle(n, n_2)\big),  
\label{SS1b}
\end{align}

\noi
where
$\angle(n, n_2)$ denotes the angle between $n$ and $n_2$.
Then, from  \eqref{SS1a} and \eqref{SS1b}, 
we have
\begin{align}
\begin{split}
|\kk_1(\bar n)| 
& =  \frac{(\jb{n} + \jb{n_2})^2 -\jb{n_1}^2}
{\jb{n} + \jb{n_2} +\jb{n_1}}
  =  \frac{2\jb{n}\jb{n_2} + |n|^2 + |n_2|^2 - |n_1|^2 + 1}
{\jb{n} + \jb{n_2} +\jb{n_1}} \\
& 
\ges
 \frac{1+ |n| |n_2| (1 - \cos \theta)}
{\jb{n_1}} , 
\end{split}
\label{SS2}
\end{align}

\noi
where $\ta = \angle(n_2, -n) \in [0, \pi]$ is the angle between $n_2$ and $-n$.

Using \eqref{SS1}, 
the contribution to $\1^{(1,1)}_{\bar \eps} (n, t)$ from $n_2\ne0$ is estimated by
\begin{align}
&\les 
\sum_{\substack{n = n_1 + n_2 \\ 1\le |n_2| \le |n_1| \\ 1 - \cos \ta \ges 1}} 
\frac{1}{\jb{n}^2\jb{n_1}^{2(1-\al)} \jb{n_2}^{2(1-\al)}(1+ |\kk_1(\bar n)|)} \notag\\
&\quad+\sum_{\substack{n = n_1 + n_2 \\ 1\le |n_2| \le |n_1| \\1 - \cos \ta \ll 1}} 
\frac{1}{\jb{n}^2\jb{n_1}^{2(1-\al)} \jb{n_2}^{2(1-\al)}(1+ |\kk_1(\bar n)|)}
\notag\\
&=: \1^{(1,1)}_{\bar \eps, 1} (n, t)  + \1^{(1,1)}_{\bar \eps, 2} (n, t).
\label{SS1_0}
\end{align}

\noi
In the following, we separately estimate 
$\1^{(1,1)}_{\bar \eps, 1} (n, t)$ and $\1^{(1,1)}_{\bar \eps, 2} (n, t)$.

\smallskip

\noi
$\bullet$ {\bf Case 1:}
 $1 - \cos \ta \ges 1$.
\quad In this case, from \eqref{SS1_0}, \eqref{SS2},  and  Lemma \ref{LEM:SUM}, 
we have 
\begin{align*}
\1^{(1,1)}_{\bar \eps, 1} (n, t)
& \les_T   
\sum_{n = n_1 + n_2} 
\frac{1}{\jb{n}^{3}\jb{n_1}^{1-2\al} \jb{n_2}^{3-2\al}} \notag\\
& \les   
\jb{n}^{-4+2\al+}
\les \jb{n}^{-2-2s_\al+}, 
\end{align*}

\noi
provided that  $0<\al < \frac 12$.
This verifies  \eqref{S3a}.

\smallskip

\noi
$\bullet$ {\bf Case 2:}
 $1 - \cos \ta \ll1$.
\quad In this case, we have $0\leq \ta \ll 1$, namely, 
$n$ and $n_2$ point in almost opposite directions.
In particular,  we have $1 - \cos \ta \sim \ta^2 \ll 1$.
By dyadically decomposing $n_2$ into $|n_2| \sim N_2$ for dyadic  $N_2 \geq 1$,
we have
\begin{align}
\1^{(1,1)}_{\bar \eps, 2} (n, t)
& \les_T   
\sum_{\substack{N_2 \geq 1\\\text{dyadic}} }
\sum_{\substack{n = n_1 + n_2 \\ \ta^2 \ll 1 \\ |n_2| \sim N_2}} 
\frac{1}{\jb{n}^2\jb{n_1}^{2(1-\al)} \jb{n_2}^{2(1-\al)}(1+ |\kk_1(\bar n)|)}.
\label{SS1d}
\end{align}

\noi
We see that 
for fixed $n \in \Z^2$, 
 the range of possible $n_2$ with $|n_2| \sim N_2$
is constrained to an axially symmetric trapezoid $\RR$
whose height is $\sim N_2 \cos \ta \sim N_2$
and the top and bottom widths $\sim N_2 \sin \ta \sim N_2 \ta$
with the axis of symmetry  given by $-n$.
See Figure \ref{FIG:2}.
Hence, we have
\begin{align}
\sum_{\substack{n_2 \in \Z^2 \\ \ta^2 \ll 1 \\ |n_2| \sim N_2}} 1
\les
1 + \text{vol}(\RR) \sim 1 + N_2^2 \ta.
\label{SS1e}
\end{align}

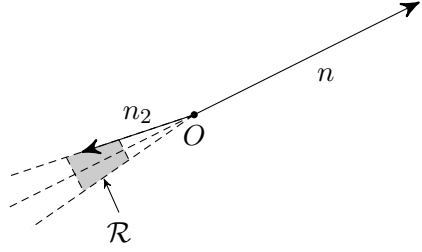
\begin{figure}[h]
\begin{tikzpicture}

\path[fill=gray!40]
(-1.7, -0.566) -- (-1.47, -1.02)
--  (-0.866, -0.601) -- (-1, -0.333) -- cycle;

\draw[ decoration={markings,mark=at position 1 with {\arrow[scale=2]{>}}},
    postaction={decorate},
    shorten >=0.4pt] (0,0) node[below]{$O$}-- (3, 1.5) 
node[pos=0.5, below right] {$n$}; 

\draw (0,0)   node[ddot]{}; 
\draw[densely dashed](0,0) --(-2.5, -1.25) ;

\draw[ decoration={markings,mark=at position 1 with {\arrow[scale=2]{>}}},
    postaction={decorate},
    shorten >=0.4pt] (0,0) node[dot]{}  --(-1.5, -0.5) 
node[pos=.5,  above] {$n_2$};

\draw[densely dashed](0,0) --(-2.5, -0.83) ;

\draw[densely dashed](0, 0 ) -- (2.5 - 14/3, 0.83 -7/3); 

\draw[densely dashed](-1, -0.333) -- (-0.866, -0.601); 

\draw[densely dashed](-1.7, -0.566) -- (-1.47, -1.02);

\draw[->] (-1, -1.3) -- (-1.2, -0.85)
node[pos=0,  below ] {$\mathcal{R}$};
 ;

\end{tikzpicture}

\caption{A typical configuration in Case 2}
\label{FIG:2}
\end{figure}

We now use the following seemingly crude bound: 
\begin{align}
1+|\kk_1(\bar n)| \ges |\kk_1(\bar n)|^{\frac 12}.
\label{X6}
\end{align}

\noi
Then, from 
\eqref{SS2} and 
\eqref{X6}
with $|n_j| \sim N_j$, $j = 1, 2$, we have
\begin{align}
\Theta_1 : = \frac{1 + N_2^2 \ta}{1+|\kk_1(\bar n)|}
\les N_1^\frac{1}{2}
\frac{1 + N_2^2 \ta}
{1 + \jb{n}^\frac 12  N_2^\frac 12 \ta}
\les \frac{N_1^\frac{1}{2} N_2^\frac{3}{2}}{\jb{n}^\frac{1}{2}}, 
\label{X6a}
\end{align}

\noi
provided that $N_2^3 \ges \jb{n}$.
The bound \eqref{X6a} follows
from separately considering the cases:
$\jb{n} \les N_2$
and $N_2 \ll \jb{n} \les N_2^3$, using the condition $N_2^3 \ges \jb{n}$.
When $N_2^3 \ll  \jb{n}$, 
the bound~\eqref{X6a} also holds
under an additional assumption $N_2^2 \ta \ges 1$
(which also implies $\jb{n}^\frac 12  N_2^\frac 12 \ta \gg 1$).
When $N_2^3 \ll  \jb{n}$
and 
 $N_2^2 \ta \ll 1$, 
 we simply use the lower bound:
 $1+|\kk_1(\bar n)| \ges 1$.

Hence,  from \eqref{SS1d}, \eqref{SS1e},   \eqref{X6}, and \eqref{X6a}
with $N_1 \sim |n_1| \sim \max(|n|, |n_2|)$,
we have 
\begin{align}
\begin{split}
\1^{(1,1)}_{\bar \eps, 2} (n, t)
& \les_T   
\sum_{\substack{N_2 \geq 1\\\text{dyadic}} }
\frac{1}{\jb{n}^{2} N_1^{2(1-\al)} N_2^{2(1-\al)}} 
\Theta_1
\cdot \big(
 \ind_{ N_2^3 \ges  \jb{n}}
+ 
 \ind_{ N_2^3 \ll \jb{n}}\cdot 
 \ind_{N_2^2 \ta \ges 1}
 \big) 
\\
& \quad + 
\sum_{\substack{ 1\ll N_2 \ll  \jb{n}^\frac{1}{3} \\ \text{dyadic}} }
\frac{1}{\jb{n}^{2} N_1^{2(1-\al)} N_2^{2(1-\al)}} 
\cdot \ind_{ N_2^2 \ta \ll 1}\\
& \les
\sum_{\substack{N_2 \geq 1\\\text{dyadic}} }
\frac{1}{\jb{n}^{\frac 52} \max(\jb{n}, N_2)^{\frac 32 -2\al} N_2^{\frac 12 -2\al} } \\
& =
\sum_{\substack{1 \le N_2 < \jb{n} \\\text{dyadic}} }
\frac{1}{\jb{n}^{4-2\al} N_2^{\frac 12 -2\al} } 
+
\sum_{\substack{N_2 \geq \jb{n} \\\text{dyadic}} }
\frac{1}{\jb{n}^{\frac 52} N_2^{2 -4\al} }.
\end{split}
\label{SS4}
\end{align}

\noi
The first term on the right-hand side of \eqref{SS4} is bounded by
\begin{align*}
\begin{split}
\sum_{\substack{1 \le N_2 < \jb{n} \\\text{dyadic}} }
\frac{1}{\jb{n}^{4-2\al} N_2^{\frac 12 -2\al} } 
&\les
\begin{cases} \jb{n}^{-4+2\al+}, & \text{if } \al  \le \frac 14, \\  
\jb{n}^{-\frac 92 +4\al}, & \text{if } \al>\frac 14
\end{cases}
\\
&\les \jb{n}^{-2-2s_\al+}, 
\end{split}
\end{align*}

\noi
where $s_\al$ is as in \eqref{exreg}.
As for the second term on the right-hand side of \eqref{SS4}, we have
\begin{align*}
\sum_{\substack{N_2 \geq \jb{n} \\\text{dyadic}} }
\frac{1}{\jb{n}^{\frac 52} N_2^{2 -4\al} }
\les
\jb{n}^{-\frac 92 +4\al}
\les \jb{n}^{-2-2s_\al}
\end{align*}
for $\al < \frac 12$.

Lastly, when $(\eps_1, \eps_3, \eps_4) = (\pm 1, \pm1, \pm1)$, 
we can essentially reduce the analysis to Cases 1 and 2 above.
See  Case 3 in the proof of Proposition 1.6 in \cite{GKO2}.
This completes the proof of Proposition \ref{PROP:sto1}.

\begin{remark} \label{REM:exsmooth} \rm

As mentioned in Section \ref{SEC:1},  the extra smoothing 
for $\<20>$
on $\T^2$
is at most $\frac 14$, while $\frac 12$-extra smoothing on $\T^3$ 
was shown in \cite{GKO2}.
This difference comes from Case 2
in the proof of Proposition \ref{PROP:sto1} above, 
where we applied~\eqref{X6}.
We point out that the bound \eqref{X6} is used to cancel the powers
of $\ta $ in~\eqref{SS4}.
Furthermore, 
 we can show that the estimate shown above is essentially sharp.
More precisely,
we have
the following the lower bound of $\1^{(1,1)} (n, t)$ in \eqref{X4}:
\begin{align}
\1^{(1,1)} (n,t)
\ges t^4 \jb{n}^{-2-2s_\al}
\label{D-3}
\end{align}
for $0<\al<\frac 12$ and $\jb{n}^{- \frac 12} \ll t \ll 1$,
where $s_\al$ is as in \eqref{exreg}.

For simplicity, we drop the truncation $|n_1|, |n_2| \le N$ in \eqref{X4}
with the understanding that a rigorous computation is to be done
with the truncation $|n_1|, |n_2| \le N$ in \eqref{X4}
and then by taking $N \to \infty$.
Namely,
we consider that 
$\1^{(1,1)} (n, t)$ in \eqref{X4} is written as follows:
\begin{align}
\begin{split}
\1^{(1,1)} (n, t)
&= \frac 1{4\pi^2} 
\sum_{\substack{n = n_1 + n_2\\n_1 \ne \pm n_2}} \int_0^{t} 
\int_0^{t_1} \frac{\sin((t - t_1) \jb{n})}{\jb{n}} 
\frac{\sin((t - t_2) \jb{n})}{\jb{n}} \\
& \hphantom{XXXXXXX}
 \times
\frac{\cos ((t_1-t_2) \jb{n_1}) }{\jb{n_1}^{2(1-\al)}}
\frac{\cos ((t_1-t_2) \jb{n_2}) }{\jb{n_2}^{2(1-\al)}} t_2^2 dt_2dt_1.
\end{split}
\label{S5a}
\end{align}
A direct calculation shows that
\begin{align}
\sin & ((t - t_1) \jb{n}) \sin((t - t_2) \jb{n}) \cos ((t_1-t_2) \jb{n_1})  \cos ((t_1-t_2) \jb{n_2}) \notag\\
&=\frac 14
\Big(-\cos ((2t-t_1-t_2) \jb{n}) + \cos ((t_1-t_2) \jb{n})\Big)
\notag\\
& \hphantom{XXX}
 \times
\Big(\cos ((t_1-t_2) ( \jb{n_1}+ \jb{n_2})) + \cos ((t_1-t_2)(\jb{n_1}-\jb{n_2}))\Big) \notag\\
&=
\frac 14 \Big(
-\cos ((2t-t_1-t_2) \jb{n}) \cos ((t_1-t_2) ( \jb{n_1}+ \jb{n_2}))
\notag\\
& \hphantom{XXX}
-\cos ((2t-t_1-t_2) \jb{n}) \cos ((t_1-t_2) ( \jb{n_1}- \jb{n_2}))
\notag\\
& \hphantom{XXX}
+ \cos ((t_1-t_2) \jb{n}) \cos ((t_1-t_2) ( \jb{n_1}+ \jb{n_2})) 
\notag\\
& \hphantom{XXX}
+ \cos ((t_1-t_2) \jb{n}) \cos ((t_1-t_2) ( \jb{n_1}- \jb{n_2})) 
\Big)
\notag\\
&=: \sum_{j=1}^4 A_{n,n_1,n_2}^{(j)}(t,t_1,t_2).
\label{D-2}
\end{align}

\noi
We denote the contribution to \eqref{S5a} from $A_{n,n_1,n_2}^{(j)}(t,t_1,t_2)$ by $\1_j (n, t)$:
\begin{align}
\1_j(n,t) :=
\frac 1{4\pi^2} 
\sum_{\substack{n = n_1 + n_2\\n_1 \ne \pm n_2}}
\frac 1{\jb{n}^2 \jb{n_1}^{2(1-\al)} \jb{n_2}^{2(1-\al)}}
\int_0^{t}
\int_0^{t_1}
A_{n,n_1,n_2}^{(j)}(t,t_1,t_2)
t_2^2 dt_2dt_1.
\label{D-1}
\end{align}

\noi
As we see below, the main contribution comes
from $\1_4(n, t)$.

First, we show that $\1_1$ and $\1_2$ satisfy
\begin{align}
|\1_1(n,t)| + |\1_2(n,t)|
\les
t^3 \jb{n}^{-\frac 52-2s_\al}
\label{D0}
\end{align}
for $0<\al<\frac 12$, $0 \le t \le 1$, and $n \in \Z^2$.
In the following, we only estimate  $\1_1$, 
since $\1_2$ can be handled in an analogous manner.
By applying a change of  the variable $\tau_1 = \frac{t_1-t_2}2$, $\tau_2 = \frac{t_1+t_2}2$
to \eqref{D-1}, 
we have
\begin{align}
\begin{split}
|\1_1 (n,t)|
&\les
\sum_{\substack{n = n_1 + n_2\\n_1 \ne \pm n_2}}
\frac{1}{\jb{n}^2 \jb{n_1}^{2(1-\al)} \jb{n_2}^{2(1-\al)}}
\\
& \quad
\times
\bigg| \int_0^{\frac t2} \int_{\tau_1}^{t-\tau_1} \cos (2(t-\tau_2)\jb{n}) \cos (2 \tau_1 (\jb{n_1}+\jb{n_2})) (\tau_1-\tau_2)^2 d\tau_2 d\tau_1 \bigg|.
\end{split}
\label{D1}
\end{align}

\noi
By integration by parts, we can bound the inner integral by 
\begin{align}
\begin{split}
 \bigg|  \int_{\tau_1}^{t-\tau_1}  & \cos (2(t-\tau_2)\jb{n}) (\tau_1-\tau_2)^2 d\tau_2 \bigg| \\
&=\bigg| -\frac{\sin (2\tau_1\jb{n})}{2 \jb{n}} (2\tau_1-t)^2 \\
&\hphantom{XXX}
- \frac 1{\jb{n}} \int_{\tau_1}^{t-\tau_1} \sin (2(t-\tau_2)\jb{n}) (\tau_1-\tau_2) d\tau_2 \bigg| \\
&\les \frac{t^2}{\jb{n}}
\end{split}
\label{D2}
\end{align}
for $0 \le \tau_1 \le \frac t2 \le \frac 12$.
It follows from \eqref{D1}, \eqref{D2}, and Lemma \ref{LEM:SUM} with $0<\al<\frac 12$ that
\begin{align*}
|\1_1(n,t)|
&\les
\sum_{\substack{n = n_1 + n_2\\n_1 \ne \pm n_2}}
\frac {t^3}{\jb{n}^3 \jb{n_1}^{2(1-\al)} \jb{n_2}^{2(1-\al)}} \\
&\les
t^3 \jb{n}^{-5+4\al}
\les
t^3 \jb{n}^{-\frac 52-2s_\al}
\end{align*}
for $0 \le t \le 1$ and $n \in \Z^2$.
This proves  \eqref{D0}.

Next, we prove the lower bound \eqref{D-3} on 
from $\1^{(1, 1)}(n, t)$.
From \eqref{D-2}
and the product formula, we have 
\begin{align*}
A_{n,n_1,n_2}^{(3)}(t,t_1,t_2)
&=
\frac 18 \Big(
\cos ((t_1-t_2) (\jb{n}+\jb{n_1}+\jb{n_2}))
\\
& \hphantom{XXXXX}
+\cos ((t_1-t_2) (\jb{n}-\jb{n_1}-\jb{n_2})) \Big), \\
A_{n,n_1,n_2}^{(4)}(t,t_1,t_2)
&=
\frac 18
\Big( \cos ((t_1-t_2) (\jb{n}+\jb{n_1}-\jb{n_2}))
\\
& \hphantom{XXXXX}
+\cos ((t_1-t_2) (\jb{n}-\jb{n_1}+\jb{n_2})) \Big).
\end{align*}

\noi
Moreover, we have
\begin{align}
\begin{split}
\int _0^t \int_0^{t_1} \cos ((t_1-t_2) a) t_2^2 dt_2 dt_1
&= \int_0^t \frac{2}{a^2}\Big( t_1 - \frac{\sin (t_1 a)}a \Big) dt_1 \\
&= \frac{t^2}{a^2} + 2 \frac{\cos (ta) - 1}{a^4} \\
&= \frac{2}{a^4} \Big( \cos (ta) -1 + \frac{t^2 a^2}2 \Big)
\ge 0
\label{D1a}
\end{split}
\end{align}
for any $a \in \R \setminus \{0\}$ and $t \ge 0$.
When $a = 0$, then the left-hand side of \eqref{D1a}
is obviously non-negative.
Hence, 
from \eqref{D-1} and \eqref{D1a}, we 
see that  $\1_3 (n,t), \1_4(n,t) \ge 0$.
Hence, 
from  \eqref{D-1} and \eqref{D0},
we obtain
\begin{align}
\begin{split}
\1^{(1,1)} (n,t)
&\ge
\1_4 (n,t) - C t^3 \jb{n}^{-\frac 52-2s_\al} \\
&\ge
J (n,t)
- C t^3 \jb{n}^{-\frac 52-2s_\al}
\end{split}
\label{D3}
\end{align}

\noi
for $0<t \ll 1$, 
where $J (n,t)$ is defined by 
\begin{align}
\begin{split}
J (n,t)
&:=
\frac 1{16\pi^2} 
\sum_{\substack{n = n_1 + n_2\\n_1 \ne \pm n_2}}
\ind_{n_2 \ne 0}\cdot 
\frac{1}{\jb{n}^2 \jb{n_1}^{2(1-\al)} \jb{n_2}^{2(1-\al)}}
\frac{1}{(\jb{n}-\jb{n_1}+\jb{n_2})^4}
\\
& \hphantom{XXXXX}
\times
\bigg(
\cos (t (\jb{n}-\jb{n_1}+\jb{n_2}))
-1 + \frac{ t^2 (\jb{n}-\jb{n_1}+\jb{n_2})^2}{2}
\bigg)\\
& \hphantom{XX} 
+ \frac 1{768\pi^2} 
\frac{t^4}{\jb{n}^{4-2\al} }.
\end{split}
\label{D3a}
\end{align}

\noi
Once we have
\begin{align}
J (n,t)
\ges
t^4 \jb{n}^{-2-2s_\al}
\label{D4}
\end{align}
for $0<t \ll 1$ and $n \in \Z^2$,
\eqref{D-3} follows from \eqref{D3} and \eqref{D4}:
\[
\1^{(1,1)} (n,t)
\ges t^3 \big( t - \jb{n}^{-\frac 12} \big) \jb{n}^{-2-2s_\al}
\ges t^4 \jb{n}^{-2-2s_\al}
\]

\noi
for $\jb{n}^{-\frac 12} \ll t \ll 1$.

Hence, it remains to 
prove \eqref{D4}.
First, consider the case
$0<\al \le \frac 14$.
In this case, from the second term on the right-hand side
of \eqref{D3a}, we have 
\begin{align}
J(n,t)
\ges t^4 \jb{n}^{-4+2\al}
= t^4 \jb{n}^{-2 -2(1-\al)}
\label{ex2}
\end{align}
for $0<t \ll 1$.
In view of \eqref{exreg}, 
this proves 
 \eqref{D4}  in this case.

Next, we consider the case $\frac 14 < \al < \frac 12$.
Given  a dyadic number $M$, we set $n=(M,0) \in \Z^2$ and
\begin{align*}
K_M = \Big\{ (a,b) \in \Z^2 : 2M \le a \le 4M, \, |b| \le M^{\frac 12} \Big\}.
\end{align*}

\noi
With  $n_1 =(a,b) \in K_M$, we have
\begin{align}
\begin{aligned}
\jb{n} - M &= \frac{1}{\jb{n}+M} \les M^{-1}, \\
\jb{n_1} - a &= \frac{1+b^2}{\jb{n_1}+a} \les 1, \\
\jb{n-n_1} - (a-M) &= \frac{1+b^2}{\jb{n-n_1}+a-M} \les 1.
\end{aligned}
\label{X7}
\end{align}

\noi
Then, it follows from \eqref{X7} that
\begin{align}
\jb{n}-\jb{n_1}+\jb{n-n_1}
= M - a + (a-M) + O(1)
\les 1.
\label{D5}
\end{align}

\noi
Hence,
from \eqref{D5}
and 
 the Taylor remainder theorem, 
we obtain
\begin{align}
\begin{split}
J(n,t)
&\ges
\sum_{n_1 \in K_M} 
\frac{1}{\jb{n}^2\jb{n_1}^{2(1-\al)} \jb{n-n_1}^{2(1-\al)}}
\frac 1{(\jb{n}-\jb{n_1}+\jb{n-n_1})^4}
\\
& \hphantom{XXX}
\times
\Big(
\cos (t (\jb{n}-\jb{n_1}+\jb{n-n_1}))
-1 + \frac{ t^2 (\jb{n}-\jb{n_1}+\jb{n-n_1})^2}{2}
\Big) \\
&\sim t^4 M^{-\frac 92 +4\al}
\sim t^4 \jb{n}^{-2-2(\frac 54- 2\al)}
\end{split}
\label{ex3}
\end{align}
for $0<t \ll 1$.
In view of \eqref{exreg}, 
this proves 
 \eqref{D4} when $\al > \frac 14$.

We also point out that 
 the calculation above can easily be extended 
 to the higher dimensional case.
More precisely, 
the right-hand side of  \eqref{ex2} is unchanged on $\T^d$
since we did not perform any summation.
By setting
\[
K_M = \Big\{ (a,b) \in \Z \times \Z^{d-1} : 2M \le a \le 4M, \, |b| \le M^{\frac 12} \Big\}
\]

\noi
and repeating the same computation on $\T^d$, 
the power on the right-hand side of \eqref{ex3}
becomes $-\frac{11}{2}+\frac d2 +4\al$.

By writing
\[ -4+2\al = -d -2 \big( 2-\tfrac d2 -\al \big)
\qquad  \text{and}\qquad 
 -\tfrac{11}{2}+\tfrac d2 +4\al = -d -2 \big( \tfrac{11}4 -\tfrac{3d}{4} - 2\al \big),\]
 
 \noi
 this computation indicates that 
 the regularity of $\<20>$ on $\T^d$ is at best
\[
\min \bigg( 2-\frac d2 -\al, \frac{11}4-\frac{3d}{4} - 2\al \bigg)
= \begin{cases}  2-\frac d2 -\al, & \text{if } 0 < \al \le \frac{3-d}{4}, \\
\frac{11}4-\frac{3d}{4} - 2\al, & \text{if } \al> \frac{3-d}{4}.
\end{cases}
\]

\noi
When $d = 3$ and $\al = 0$, 
this agrees with the $\frac 12$-smoothing shown in \cite{GKO2}.

\end{remark}

\subsection{Proof of Proposition \ref{PROP:sto2}}
\label{SUBSEC:33}

In this subsection, 
we present a proof of Proposition~\ref{PROP:sto2}
on  the resonant product $\<21p>_N =\<20>_N\pe \<1>_N$.
As in the previous subsection, 
we follow the argument in~\cite{GKO2}
but, as we see below,  our argument turns out to be simpler than the proof of Proposition~1.8 in~\cite{GKO2}.

From \eqref{para1} and  \eqref{stoc2}, we have
\begin{align}
\ft{\<21p>}_N (n, t)
& = \frac{1}{4\pi^2}
\sum_{\substack{n = n_1 + n_2+n_3\\|n_1 + n_2| \sim |n_3|\\ n_1 + n_2 \ne 0}} 
\int_0^t  \frac{\sin ((t-t') \jb{ n_1 + n_2 })}{\jb{ n_1 + n_2 }} 
\ft{\<1>}_N(n_1, t')\ft{\<1>}_N(n_2, t') dt' \cdot  \ft{\<1>}_N(n_3, t) \notag\\
& \hphantom{X} 
+ \frac{1}{4\pi^2}
\sum_{\substack{n_1\in \Z^2 \\ |n_1| \le N}} \ind_{|n|\sim 1} \int_0^t \sin (t - t')\cdot \big(  |\ft{\<1>}(n_1, t')|^2  -  \s_{n_1}(t')\big) 
dt' \cdot  \ft{\<1>}_N(n, t) \notag\\
& =: \ft \RR_1(n, t) + \ft \RR_2(n, t),
\label{Y1}
 \end{align}

\noi
where  the conditions $|n_1 + n_2| \sim |n_3|$ in the first term
and $|n|\sim 1$ in the second term signify the resonant product $\pe$.
From ~\eqref{X3} and Lemma \ref{LEM:reg}, 
we easily see that  $\RR_2 \in C(\R_+; C^\infty(\T^2))$
almost surely, provided that 
$\al<\frac 12$.
Therefore, it suffices to show
\begin{align}
\E\big[|\ft \RR_1(n, t)  |^2\big]
&  \les_T \jb{n}^{-2-2(s_\al-\al)+}
\label{Y2}
\end{align}
for $n \in \Z^2$ and $0 \le t \le T$, uniformly in $N \in \N$,
where $s_\al$ is as in \eqref{exreg}.
As in \cite{GKO2},   decompose $\ft \RR_1$ as 
 \begin{align}
\ft \RR_1(n, t)
& = 
\frac 1{4\pi^2}
\sum_{\substack{n = n_1 + n_2+n_3\\|n_1 + n_2| \sim |n_3|\\ (n_1 + n_2)(n_2 + n_3) (n_3 + n_1) \ne 0}} 
\int_0^t  \frac{\sin ((t-t') \jb{ n_1 + n_2 })}{\jb{ n_1 + n_2 }} \notag\\
& \hphantom{XXXXXXXXXlllllXX}
\times 
\ft{\<1>}_N(n_1, t')\ft{\<1>}_N(n_2, t') dt' \cdot  \ft{\<1>}_N(n_3, t) \notag\\
& \hphantom{X} 
+ \frac 1{2\pi^2} 
\int_0^t \ft{\<1>}_N(n, t') 
\bigg[\sum_{\substack{n_2 \in \Z^2\\|n_2| \sim |n+n_2|\ne 0 \\ |n_2| \le N}}  \frac{\sin ((t-t') \jb{ n + n_2 })}{\jb{ n + n_2 }} \notag\\
& \hphantom{XXXXXXXXXlllllXX}
\times 
\Big(\ft{\<1>}(n_2, t')\ft{\<1>}(-n_2, t) - \s_{n_2}(t,  t')\Big)\bigg] dt'    \notag\\
& \hphantom{X} 
+ \frac 1{2\pi^2} 
\int_0^t \ft{\<1>}_N(n, t') 
\bigg[\sum_{\substack{n_2 \in \Z^2\\| n_2| \sim |n+n_2| \ne 0 \\ |n_2| \le N}} \frac{\sin ((t-t') \jb{ n + n_2 })}{\jb{ n + n_2 }} 
 \s_{n_2}(t, t')\bigg] dt'   \notag\\
& \hphantom{X} 
- \frac 1{4\pi^2}\cdot \ind_{n\ne0}
\int_0^t  \frac{\sin ((t-t') \jb{ 2n })}{\jb{ 2n }} 
\, (\ft{\<1>}_N(n, t'))^2dt' \cdot \ft{\<1>}_N(-n, t)\notag\\
&
=: \ft \RR_{11}(n, t)+  \ft \RR_{12}(n, t)+  \ft \RR_{13}(n, t)+  \ft \RR_{14}(n, t), 
\label{Y3}
  \end{align}

\noi
where $\RR_{12}$ and $\RR_{14}$ correspond to the ``renormalized'' contribution
from $n_1 + n_3= 0$ or $n_2 + n_3= 0$ and the contribution
from $n_1 = n_2 = n = - n_3$, respectively.

Proceeding as in \cite{GKO2}
(and noting
that $|n+ n_2| \sim|n_2|$ implies $|n_2| \ges |n|$), 
 we can estimate 
$\ft \RR_{12}$ and $\ft \RR_{14}$
and show that they satisfy \eqref{Y2}.
As for $\ft \RR_{11} $, 
by applying 
Jensen's inequality as in \cite{GKO2}
(see also 
 Section~10 in~\cite{Hairer}
 and the discussion on $\<31p>$ in Section~4 of \cite{MWX})
and then   \eqref{sconv3}, \eqref{S3a}, and Lemma \ref{LEM:SUM} (ii)
(noting that $|m|\sim |n_3|$), 
 we obtain
 \begin{align}
 \begin{split}
\E\big[|\ft \RR_{11}(n, t)  |^2\big]
&  \les 
\sum_{\substack{n = m + n_3\\ |m|\sim|n_3|}}
\E \big[ |\ft{\<20>}(m, t)|^2\big]
\E \big[ |\ft{\<1>}(n_3, t)|^2\big] \\
& \les_T
\sum_{\substack{n = m + n_3\\ |m|\sim|n_3|}} \frac{1}{\jb{m}^{2+2s_\al-} \jb{n_3}^{2-2\al}} \\
&\les \jb{n}^{-2-2(s_\al-\al)+}, 
\end{split}
\label{Y3w} 
  \end{align}

\noi
provided that  $0<\al<\frac 12$.

Lastly, we consider   $\ft\RR_{13}$ 
in   \eqref{Y3}.
Let  $0 \leq t_2 \leq t_1 \leq T$.
Then, from \eqref{Y3} with \eqref{sigma2},  we have
\begin{align*}
\begin{split}
\E\big[|\ft \RR_{13}(n, t)|^2\big]
& =\frac 1{2\pi^4}
\ind_{|n| \le N}
\sum_{k_0, k_1, k_2 \in \{1, 2\}}
\int_0^t 
\int_0^{t_1} 
\s_{n}^{(k_0)}(t_1, t_2)
 \\
& \hphantom{XX}
\times 
\bigg[\sum_{\substack{n_2 \in \Z^2\\| n_2| \sim |n+n_2| \ne0 \\ |n_2| \le N}} \frac{\sin ((t-t_1) \jb{ n + n_2 })}{\jb{ n + n_2 }} 
\s_{n_2}^{(k_1)}(t, t_1)\bigg]  \\
& \hphantom{XX}
\times 
\bigg[\sum_{\substack{n'_2 \in \Z^2\\| n'_2| \sim |n+n'_2| \ne0 \\ |n_2'| \le N}} \frac{\sin ((t-t_2) \jb{ n + n'_2 })}{\jb{ n + n'_2 }} 
\s_{n_2'}^{(k_2)}(t, t_2)\bigg]
   dt_2 dt_1 \\
& =: \sum_{k_0, k_1, k_2 \in \{1, 2\}} \1^{(k_0,k_1,k_2)}(n,t) , 
\end{split}
\end{align*}

\noi
where  $\s_{n}(t, t' )   = \s_{n}^{(1)}(t, t' )  + \s_{n}^{(2)}(t, t' )  $
as in \eqref{S61} and \eqref{S62}.
In the following, we only consider the contribution
from 
  $(k_0, k_1, k_2) = (1, 1, 1)$,
  since, in the other cases, 
  the desired bound \eqref{Y2} trivially follows
  from Lemma \ref{LEM:SUM}\,(ii)
without using any oscillatory behavior.

By a direction computation with  \eqref{S61}, 
we have 
\begin{align}
 & \1^{(1,1,1)} (n,t)
\notag   \\
& \sim 
\ind_{|n| \le N} \sum_{\substack{\eps_j \in \{-1, 1\}\\j = 1, \dots, 5}}
\sum_{\substack{n_2 \in \Z^2\\| n_2| \sim |n+n_2|\ne 0 \\ |n_2| \le N}} 
\sum_{\substack{n'_2 \in \Z^2\\| n'_2| \sim |n+n'_2| \ne0 \\ |n_2'| \le N}}
\frac{
\eps_1\eps_2e^{i  t (\eps_1 \jb{n+n_2} 
+   \eps_2  \jb{n+n_2'}
+ \eps_3  \jb{n_2} +  \eps_4  \jb{n_2'})}
}{\jb{n}^{2(1-\al)} \jb{n+n_2} \jb{n_2}^{2(1-\al)} \jb{n+n_2'} \jb{n_2'}^{2(1-\al)}}
\notag  \\
& \hphantom{XXXX}
\times  \int_0^t 
t_1  e^{ - i t_1  \kk_3(\bar n)}
 \int_0^{t_1} 
t_2^2  e^{- i t_2\kk_4(\bar n')} 
dt_2 dt_1,
\label{Y5}  
\end{align}

\noi
where
$\kk_3(\bar n)$ and $\kk_4(\bar n)$ are defined by 
\begin{align*}
\begin{split}
\kk_3(\bar n) 
& = \eps_1 \jb{n+n_2}  
+ \eps_3  \jb{n_2}
 - \eps_5  \jb{n}, 
 \\
\kk_4(\bar n') 
& = 
 \eps_2  \jb{n+n_2'}
+  \eps_4  \jb{n_2'}
+  \eps_5  \jb{n}.
\end{split}
\end{align*}

\noi
Note that 
for  $\al< \frac 12$, the sums over $n_2$ and $n_2'$ in~\eqref{Y5} are absolutely convergent.
This makes our analysis simpler than the proof of Proposition 1.8 in \cite{GKO2},
where the corresponding sums in $n_2$ and $n_2'$ were not absolutely convergent
and hence, it was crucial to exploit the oscillatory nature of the problem
and also apply some symmetrization argument.

By first  integrating~\eqref{Y5} in $t_1$
when  $|\kk_3(\bar n)|\geq 1$
and simply bounding the integral in~\eqref{Y5} by $C(T)$
when  $|\kk_3(\bar n)|<  1$,
and then applying Lemma \ref{LEM:SUM}\,(ii),
we have
\begin{align}
|\1^{(1,1,1)} (n,t)|
& \les_T \frac{1}{\jb{n}^{2(1-\al)}}
\sum_{\substack{n_2 \in \Z^2\\|n + n_2| \sim |n_2|}} 
\frac{1}
{ \jb{n+n_2} \jb{n_2}^{2(1-\al)} (1+ |\kk_3(\bar n)|)} \notag\\
& \hphantom{XXXX}
\times
\sum_{\substack{n'_2 \in \Z^2\\|n + n'_2| \sim |n'_2|}}
\frac{1}
{ \jb{n+n_2'} \jb{n_2'}^{2(1-\al)}} \notag\\
& \les \frac{1}{\jb{n}^{3-4\al}}
\sum_{\substack{n_2 \in \Z^2\\|n + n_2| \sim |n_2|}} 
\frac{1}
{ \jb{n+n_2} \jb{n_2}^{2(1-\al)} (1+ |\kk_3(\bar n)|)}
\label{Y7}
\end{align}

\noi
for $\al < \frac 12$.
In the following, 
 we only consider the case $(\eps_1, \eps_3, \eps_5) = (\pm 1, \mp 1, \pm1)$,
since the other cases are  handled in an analogous manner.
See also the proof of  Proposition \ref{PROP:sto1}.

In this case, by repeating the argument in Case 2 of the proof of Proposition \ref{PROP:sto1}
(in particular, \eqref{SS2} with $(n, n+n_2, -n_2)$ replacing $(n, n_1, n_2)$), 
we have
\begin{align}
|\kk_3(\bar n)| = |\jb{n+n_2}-\jb{n_2}-\jb{n}|
\ges  \frac{1 + |n| |n_2| (1 - \cos \theta)}
{\jb{n + n_2}},  
\label{Y9a}
\end{align}
where $\theta = \angle (n,n_2)$.
Then, 
as in \eqref{X6a}, 
it follows 
from 
\eqref{Y9a} and 
$1+|\kk_3(\bar n)| \ges |\kk_3(\bar n)|^{\frac 12}$
with $|n_2| \sim N_2$
that 
\begin{align}
\Theta_3 : = \frac{1 + N_2^2 \ta}{1+|\kk_3(\bar n)|}
\les \jb{n+n_2}^\frac{1}{2}
\frac{1 + N_2^2 \ta}
{1 + \jb{n}^\frac 12  N_2^\frac 12 \ta}
\les \frac{ N_2^2}{\jb{n}^\frac{1}{2}}, 
\label{Y9aa}
\end{align}

\noi
since  $N_2^3 \ges \jb{n}$ under the condition $|n + n_2|\sim |n_2|$.

When  $1 - \cos \ta \ges 1$, 
by summing over $n_2$ in \eqref{Y7}
with \eqref{Y9a} and Lemma \ref{LEM:SUM}, 
we obtain
\begin{align}
|\1^{(1,1,1)} (n,t)|
\les_T \jb{n}^{-5+6\al}
\le \jb{n}^{-2-2(s_\al-\al)}
\notag
\end{align}
for $\al <\frac 12$.

Next, consider the case  $1 - \cos \ta \sim \ta^2 \ll 1$.
We see that 
for fixed $n \in \Z^2$, 
 the range of possible $n_2$ with $|n_2| \sim N_2$, dyadic $N_2\geq 1$, 
is constrained to a trapezoid
whose height is $\sim N_2 |\cos \ta| \sim N_2$
and the width $\sim N_2 \sin \ta \les N_2 \ta$. 
Then,  from \eqref{Y7} with a dyadic decomposition as in \eqref{SS1d}
and \eqref{SS1e}, 
 and \eqref{Y9aa}, 
we have 
\begin{align}
|\1^{(1,1,1)} (n,t)|
& \les_T \frac{1}{\jb{n}^{3-4\al}}
\sum_{\substack{N_2 \ges \jb{n} \\\text{dyadic}} }
\frac{1}{N_2^{3-2\al}}
\Theta_3
 \notag\\
&\les \frac{1}{\jb{n}^{\frac 72-4\al}}
\sum_{\substack{N_2 \ges \jb{n} \\\text{dyadic}} }
\frac{1}{N_2^{1-2\al}} \notag\\
&\les
\jb{n}^{- \frac 92 +6\al}
\les \jb{n}^{-2-2(s_\al-\al)}
\notag
\end{align}
for $\al< \frac 12$.
We therefore obtain \eqref{Y2}.

Since the summations above are absolutely convergent,
a slight modification of the argument yields the time difference estimate \eqref{reg2}
and the estimates \eqref{reg3} and \eqref{reg4}
for proving convergence of $\<21p>_N$ to $\<21p>$
by  
 Lemma \ref{LEM:reg}.
This completes the proof of Proposition~\ref{PROP:sto2}.

\begin{remark} \rm \label{REM:21reg}
As pointed above, 
the sums in \eqref{Y5} is absolutely convergent for $\al < \frac 12$.
Therefore, even without exploiting multilinear dispersion, 
we can make sense of $\<21p>$.
In the following, by a crude estimate, we show 
\begin{align}
\<21p> \in C([0,T];W^{1-3\al-,\infty}(\T^2)) 
\label{X7a}
\end{align}

\noi
almost surely.

Note that $\ft \RR_{12}$ and $\ft \RR_{14}$
satisfy \eqref{Y2} without making use of any dispersion.
Thus, we only need to consider
 $\ft \RR_{11}$ and $\ft \RR_{13}$.
Let $\1 (n,t)$ be as in \eqref{S5}.
Then, by applying Lemma~\ref{LEM:SUM}
to  \eqref{S6a} with \eqref{sigma2}, we have
\[
|\1 (n,t)|
\les_T \frac{1}{\jb{n}^2} \sum_{n=n_1+n_2} \frac{1}{\jb{n_1}^{2(1-\al)} \jb{n_2}^{2(1-\al)}}
\les \jb{n}^{-4+4\al}
\]
for $0<\al<\frac 12$.
Together with \eqref{X3a}, 
this implies 
\begin{align}
\E \big[ |\ft{\<20>}(n, t)|^2\big]
\les_T \jb{n}^{-4+4\al}
\label{X8}
\end{align}

\noi
even when we do not exploit multilinear dispersion.
Then, 
using \eqref{sconv3} and \eqref{X8}, 
we can replace
\eqref{Y3w} 
by 
\begin{align}
\begin{split}
\E\big[|\ft \RR_{11}(n, t)  |^2\big]
&  \les 
\sum_{\substack{n = m + n_3\\ |m|\sim|n_3|}}
\E \big[ |\ft{\<20>}(m, t)|^2\big]
\E \big[ |\ft{\<1>}(n_3, t)|^2\big] \\
&\les_T \jb{n}^{-4+6\al}
 =  \jb{n}^{-2-2(1-3\al)}.
\end{split}
\label{X9}
\end{align}

\noi
By ignoring all the oscillatory factors in \eqref{Y5}, 
we obtain
\begin{align}
\E\big[|\ft \RR_{13}(n, t)|^2\big]
& \les_T \frac{1}{\jb{n}^{2(1-\al)}}
\sum_{\substack{n_2 \in \Z^2\\|n + n_2| \sim |n_2|}} 
\frac{1}
{ \jb{n+n_2} \jb{n_2}^{2(1-\al)}}
\sum_{\substack{n'_2 \in \Z^2\\|n + n'_2| \sim |n'_2|}}
\frac{1}
{ \jb{n+n_2'} \jb{n_2'}^{2(1-\al)}} \notag \\
&\les \jb{n}^{-4+6\al}.
\label{X10}
\end{align}

\noi
Therefore, 
\eqref{X7a} follows from 
 Lemma \ref{LEM:reg}
with  \eqref{X9}, \eqref{X10},
and  the trivial bounds  for $\ft \RR_{12}$ and $\ft \RR_{14}$.
\end{remark}

\subsection{Divergence of the stochastic terms}
\label{SUBSEC:34}

In this subsection, we present the proof of Proposition \ref{PROP:stodiv}.
By \eqref{stoc2} and \eqref{so4b} with \eqref{sigma3}, for $n \in \Z^2$ and $t>0$, we can write
\begin{align}
\ft{\<20>}_N (n,t)
= \frac {1}{2\pi} \sum_{\substack{k \in \Z^2\\k \preccurlyeq n - k \\ |k|, |n-k| \le N}} X_k (n,t),
\label{div2a}
\end{align}
where $\preccurlyeq$ denotes the lexicographic ordering of $\Z^2$ and 
\begin{align}
X_k (n,t) :=
( 2 - \ind_{n = 2k})
\int_0^t \frac{\sin ((t-t')\jb{n})}{\jb{n}} \Big( \ft{\<1>}(k,t') \ft{\<1>}(n-k,t') - \ind_{n = 0} \cdot \s_k (t',t') \Big) dt'.
\label{div2x}
\end{align}

\noi
Note that $X_k(n, t)$'s are independent.
We show that the sum in \eqref{div2a} diverges almost surely.
We only consider the case $|k| \sim |n-k| \gg |n|$.
Otherwise, we have either $|k|\sim  |n| \ges |n-k|$
or $|n-k|\sim  |n| \ges |k|$.
In either case, for fixed $n \in \Z^2$, 
the sum in $k$ is  a finite sum and hence is almost surely convergent.
This allows us to focus on 
 the case $|k| \sim |n-k| \gg |n|$.
In particular, we assume $k \neq \frac n2$.

As in \eqref{S5}, we have 
\begin{align}
\begin{split}
\E [ X_k (n,t)] &=0, \\
\E \big[ |X_k(n,t)|^2\big] &= 8 \int_0^t \frac{\sin ((t-t_1)\jb{n})}{\jb{n}} \int_0^{t_1} 
\frac{\sin ((t-t_2)\jb{n})}{\jb{n}}\\
& \hphantom{XXXX}
\times  \s_k (t_1,t_2) \s_{n-k}(t_1,t_2) dt_2 dt_1. 
\end{split}
\label{div2aa}
\end{align}

\noi
When $n = 0$ (which implies $k \ne 0$ under the assumption $k \ne \frac n2$), 
we used \eqref{1mon4}.
From \eqref{sigma2} and $|k| \sim |n-k|$, we have
\begin{align}
\begin{aligned}
\s_{k} (t_1,t_2) \s_{n-k} (t_1,t_2)
&= \frac{\cos ( (t_1-t_2)(\jb{k}-\jb{n-k}) )}{8 \jb{k}^{2(1-\al)} \jb{n-k}^{2(1-\al)}} t_2^2\\
& \hphantom{X}
+ \frac{\cos ( (t_1-t_2)(\jb{k}+\jb{n-k}) )}{8 \jb{k}^{2(1-\al)} \jb{n-k}^{2(1-\al)}} t_2^2\\
&\quad + O \big( \jb{t_2} \jb{k}^{-5+4\al} \big).
\end{aligned}
\label{div20}
\end{align}

\noi
The contribution to \eqref{div2aa} from the first term on the right-hand side of \eqref{div20} is worst.
Indeed, we can use the dispersion to estimate
the contribution to \eqref{div2aa} from  the second term on the right-hand side of \eqref{div20}.
Namely, by integrating in $t_2$ and using 
$|k| \sim |n-k| \gg |n|$, we have
\begin{align}
\begin{aligned}
\bigg| \int_0^t & \frac{\sin ((t - t_1) \jb{n})}{\jb{n}}
 \int_0^{t_1} \frac{\sin ((t-t_2) \jb{n})}{\jb{n}}
  \frac{\cos ( (t_1-t_2)(\jb{k}+\jb{n-k}) )}{\jb{k}^{2(1-\al)} \jb{n-k}^{2(1-\al)}} t_2^2 dt_2 dt_1 \bigg|  \\
&\les  \frac{1}{\jb{n}^2 \jb{k}^{2(1-\al)} \jb{n-k}^{2(1-\al)}} \\
& \hphantom{XXX}
\times
\sum_{\eps_1, \eps_2 \in \{-1, 1\}}
\int_0^t
\bigg|\int_0^{t_1}
e^{-it_2 (\eps_1 \jb{n} + \eps_2(\jb{k} + \jb{n-k}))}
 t_2^2 d t_2 \bigg| dt_1\\
&\les_t \frac{1}{\jb{n}^2 \jb{k}^{5-4\al}}.
\end{aligned}
\label{div2b}
\end{align}

Now, let us estimate the contribution to \eqref{div2aa} from the first term
on the right-hand side of \eqref{div20}.
Given $n \in \Z^2$, 
we choose small $t > 0$
such that  $t \jb{n} \ll 1$, 
which implies 
\begin{align}
\frac{\sin(t\jb{n})}{\jb{n}}
\ges t 
\qquad\text{and}
\qquad
\cos (t\jb{n})
\ges 1.
\label{div2c}
\end{align}

\noi
Noting that 
\[
|\jb{k}-\jb{n-k}| = \frac{||k|^2-|n-k|^2|}{\jb{k}+\jb{n-k}}
\les |n|,
\]

\noi
it follows from \eqref{div2c} that
\begin{align}
\int_0^t & \frac{\sin ((t - t_1) \jb{n})}{\jb{n}}
 \int_0^{t_1} \frac{\sin ((t-t_2) \jb{n})}{\jb{n}} \frac{\cos ( (t_1-t_2)(\jb{k}-\jb{n-k}) )}{\jb{k}^{2(1-\al)} \jb{n-k}^{2(1-\al)}} t_2^2  dt_2 dt_1 \notag\\
&\ges \frac{1}{\jb{k}^{4(1-\al)}}
\int_0^t (t-t_1) \int_0^{t_1} (t-t_2) t_2^2 dt_2 dt_1 \notag\\
&\ges \frac{t^6}{\jb{k}^{4(1-\al)}}.
\label{div2d}
\end{align}

\noi
By \eqref{div2aa}, \eqref{div20}, \eqref{div2b}, and \eqref{div2d}, we obtain
\begin{equation}
\E \big[ |X_k(n,t)|^2\big]
\ges \frac{t^6}{\jb{k}^{4(1-\al)}} - \frac{1}{\jb{n}^2 \jb{k}^{5-4\al}}
\ges \frac{t^6}{\jb{k}^{4(1-\al)}}
\label{div2e}
\end{equation}

\noi
for $|k| \gg t^{-6}\jb{n}^{-2}$ and $t \jb{n} \ll 1$.
This  implies that
\[
\begin{split}
\sum_{\substack{k \in \Z^2 \\k \preccurlyeq n - k \\ |n| \ll |k| \le N}} \E \big[ |X_k(n,t)|^2 \big]
&\ges_{t,n}
\sum_{\substack{k \in \Z^2 \\ t^{-6}\jb{n}^{-2} \ll |k| \le N}}
\frac{1}{\jb{k}^{4(1-\al)}}
\ges_{t,n}
\begin{cases}
\log N, & \text{if } \al =\frac{1}{2} \\
N^{-2+4\al}, & \text{if }\al > \frac{1}{2}
\end{cases} \\
&\longrightarrow \infty
\end{split}
\]
as $N \to \infty$.
Hence, Kolmogorov's three-series theorem (\cite[Theorem 2.5.8]{Durr}) yields that $P \big( | \lim_{N \to \infty} \ft{\<20>}_N(n,t)| < \infty \big) <1$.
Moreover,
recalling  the independence of $\{X_k(n, t)\}_{k \in \Z^2, k \preccurlyeq n - k}$, 
 it follows from Kolmogorov's zero-one law (\cite[Theorem 2.5.3]{Durr}) that 
 \[P \Big( \big| \lim_{N \to \infty} \ft{\<20>}_N(n,t) \big| <\infty \Big) =0.\]
 
 \noi
In particular, we obtain that $\{ \<20>_N \}_{N \in \N}$ 
forms a divergent sequence  in $C([0,T]; \mathcal{D}'(\T^2))$ almost surely for any $T>0$.

\begin{remark}\label{REM:divcov} \rm 
(i) 
From \eqref{S5}, \eqref{X3a}, and \eqref{div2aa} we have
\[
\E \big[ |\ft{\<20>}_N(n,t)|^2 \big]
= \frac 1{4\pi^2}
 \sum_{\substack{k \in \Z^2\\ k \preccurlyeq n - k \\ |k|, |n-k| \le N}} \E \big[ |X_k (n,t)|^2 \big]
+  \ind_{n \in 2 \Z^2\setminus\{0\}} \cdot O\big(\jb{n}^{-6 + 4\al}\big).
\]

\noi
This shows that 
 Proposition \ref{PROP:stodiv} is a consequence of $\lim_{N \to \infty} \E \big[ |\ft{\<20>}_N(n,t)|^2 \big] =\infty$.

\smallskip

\noi
(ii)
Note that the calculations in \eqref{div2b}, \eqref{div2c}, and \eqref{div2d} are independent of dimensions.
In particular,
the lower bound \eqref{div2e} is also valid on $\T^d$.
From this observation, 
we conclude that  $\{ \<20>_N \}_{N \in \N}$ 
forms a divergent sequence  in $C([0,T]; \mathcal{D}'(\T^d))$ almost surely if $\al \ge 1-\frac d4$.
Note that for $d \geq  5$, 
we need to apply smoothing (i.e.~$\al < 0$)
in order to construct the second order process
  $ \<20>$ as a limit of  $\{ \<20>_N \}_{N \in \N}$. 

Since the critical value given by the probabilistic scaling is
$\al_* = \min \big(\frac{5-d}{4}, \frac{5-d}{2}\big)$, 
we see that 
 the existing solution theory such as the Da Prato-Debussche trick
or its higher order variants
breaks down
at $\al  =  1-\frac d4$
before
  reaching the critical value $\al_*$ in 
  dimensions $d = 1, \dots, 5$.

\end{remark}

\section{Stochastic nonlinear heat equation with rough noise}
\label{SEC:heat}

In this section, we consider SNLH \eqref{SNLH1}.
In Subsection \ref{SUBSEC:heat1}, we first state the regularity properties
of the relevant stochastic terms 
and present a proof of Theorem \ref{THM:WP2}
by reformulating the problem in terms of the residual term 
$v = u - \<1> + \<20>$.
We then proceed with the construction
of the stochastic terms in 
the remaining part of this section.
This includes the divergence 
of $\<2>$ (and $\<20>$, respectively)
for $\al \geq \frac 12$ (and $\al \geq 1$, respectively)
stated in Proposition \ref{PROP:heat}.

\subsection{Reformulation of SNLH}
\label{SUBSEC:heat1}

Let $\al > 0$.
We define 
the truncated stochastic convolution $\<1>_N = \I( \jb{\nb}^\al \pi_N \xi)$ 
by 
\begin{align}
\<1>_N
: = \int_{-\infty}^t P(t- t')  \jb{\nb}^{\al} \pi_N dW(t')
= \sum_{\substack{n \in \mathbb{Z}^2 \\ |n| \le N}} e_n 
  \int_{-\infty}^t e^{-(t-t') \jb{n}^2} \jb{n}^{\al} d \beta_n (t')
\label{so4bHa}
\end{align}

\noi
for $t \geq 0$, 
where $\pi_N$, $P(t)$, $e_n$, and $W(t)$   are as in \eqref{pi}, \eqref{heat1},  \eqref{exp}, and  \eqref{Wpro},  respectively.
We denote the limit of $\<1>_N$ by $\<1>$:
\begin{align}
\<1> = \lim_{N \to \infty} \<1>_N
=\int_{-\infty}^t P(t- t')  \jb{\nb}^{\al} dW(t').
\label{stoc1H}
\end{align}
We then define the truncated Wick power  $\<2>_N$  by 
\begin{align}
\<2>_N   := (\<1>_N)^2 - \kk_N.
\label{so4bHb}
\end{align}

\noi
where
$\kk_N$ is defined by 
\begin{align}
\kk_N : \! & = \E\big[ (\<1>_N(x, t))^2\big]
=  \frac 1 {4 \pi^2}\sum_{|n|\leq N}
\int_{-\infty}^t \Big( e^{-(t-t') \jb{n}^2} \jb{n}^\al \Big)^2 dt' \notag\\
& = \frac 1{8\pi^2} \sum_{|n|\leq N} \frac 1{\jb{n}^{2(1-\al)}}
\sim N^{2\al}.
\label{kap}
\end{align}

\noi
Then, by proceeding as in the proof of Lemma \ref{LEM:stoconv}\,(i), 
we have the following regularity and convergence property
of $\<1>_N$.
Since the argument is standard, we omit details.

\begin{lemma}\label{LEM:stoconv2}
Let $T > 0$. Given  $\al \in \R$ and $s<-\al$,
$\{ \<1>_N \}_{N \in \N}$  defined in \eqref{so4bHa} is
 a Cauchy sequence
in $C([0,T];\C^{s}(\T^2))$
almost surely.
 In particular, 
denoting the limit by $\<1>$,
we have
  \[\<1> \in C([0, T];\C^{-\al - \eps}(\T^2))
  \]
  
  \noi
  for any $\eps > 0$, 
  almost surely.

\end{lemma}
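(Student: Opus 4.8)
The plan is to deduce the lemma from the general regularity criterion, Lemma~\ref{LEM:reg} (applied with $k=1$), following verbatim the scheme used for the wave case in Lemma~\ref{LEM:stoconv}\,(i) but with the heat propagator $P(t)$ replacing the wave propagator. First I would observe that, for each $t\ge 0$, the truncated stochastic convolution $\<1>_N(t)$ in \eqref{so4bHa} is linear in the Brownian motions $\{\be_n\}$, hence belongs to the first Wiener chaos $\H_1$, and that it is spatially homogeneous: translating $W$ by $x_0$ replaces $\be_n$ by $e^{in\cdot x_0}\be_n$, which preserves the joint law subject to $\be_{-n}=\cj{\be_n}$. The whole proof then reduces to verifying the second moment bounds \eqref{reg1}--\eqref{reg4} with $d=2$, uniformly in $N$, with $s_0=-\al$ up to an arbitrarily small loss.

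For \eqref{reg1}, the It\^o isometry gives
\[
\E\big[|\ft{\<1>}_N(n,t)|^2\big]=\ind_{|n|\le N}\,\jb{n}^{2\al}\int_{-\infty}^{t}e^{-2(t-t')\jb{n}^2}\,dt'=\ind_{|n|\le N}\,\tfrac12\,\jb{n}^{-2+2\al}\les\jb{n}^{-2+2\al},
\]
which is exactly the form $\jb{n}^{-d-2s_0}$ with $s_0=-\al$. For the temporal increment (say $h>0$; the case $h<0$ is symmetric) I would use the decomposition
\[
\dl_h\ft{\<1>}_N(n,t)=\ind_{|n|\le N}\jb{n}^{\al}\Big[\big(e^{-h\jb{n}^2}-1\big)\!\int_{-\infty}^{t}\!e^{-(t-t')\jb{n}^2}\,d\be_n(t')+\int_{t}^{t+h}\!e^{-(t+h-t')\jb{n}^2}\,d\be_n(t')\Big],
\]
together with the elementary bounds $\big(e^{-h\jb{n}^2}-1\big)^2\les\min(1,h\jb{n}^2)^\theta$ and $1-e^{-2h\jb{n}^2}\les\min(1,h\jb{n}^2)^\theta$ valid for $\theta\in(0,1)$, to get $\E[|\dl_h\ft{\<1>}_N(n,t)|^2]\les\jb{n}^{2\al-2}\min(1,h\jb{n}^2)^\theta\les\jb{n}^{-2+2\al+2\theta}|h|^\theta$, uniformly in $N$ and in $0\le t\le t+h\le T$. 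This is \eqref{reg3} with $s_0=-\al-\tfrac{\theta}{2}$, so Lemma~\ref{LEM:reg}\,(ii) yields $\<1>_N\in C([0,T];\C^s(\T^2))$ for every $s<-\al-\theta$; since $\theta>0$ is arbitrary, this covers all $s<-\al$.

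For the Cauchy/convergence part I would note that, for $M\ge N\ge1$, $\ft{\<1>}_M(n,t)-\ft{\<1>}_N(n,t)$ is supported on $N<|n|\le M$ and equals the same stochastic integral, whence $\E[|\ft{\<1>}_M(n,t)-\ft{\<1>}_N(n,t)|^2]=\ind_{N<|n|\le M}\tfrac12\jb{n}^{-2+2\al}\les N^{-\delta}\jb{n}^{-2+2\al+\delta}$ for any $\delta>0$, and similarly one gains the same factor $N^{-\delta}$ in the associated time increment estimate. These are precisely \eqref{reg2} and \eqref{reg4} with $\g=\delta$ and an $O(\delta+\theta)$ loss in the spatial index; choosing $\delta,\theta>0$ small depending on the target $s<-\al$ and invoking Lemma~\ref{LEM:reg} shows that $\{\<1>_N\}_{N\in\N}$ is almost surely Cauchy in $C([0,T];\C^s(\T^2))$, with limit $\<1>$ as in \eqref{stoc1H}.

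I do not expect any genuine obstacle: everything is a first chaos computation resting on the It\^o isometry and the two displayed heat kernel inequalities. The only mildly delicate bookkeeping is the $\min(1,h\jb{n}^2)$ splitting, which trades the temporal modulus of continuity of the heat semigroup for H\"older-in-time control at the cost of an arbitrarily small loss of spatial regularity; this is precisely why the statement is phrased as ``$s<-\al$'' rather than at the endpoint, and why it is fair to regard the argument as standard.
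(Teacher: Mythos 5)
Your proposal is correct and follows essentially the same route the paper intends: the paper omits the proof of this lemma, stating only that one proceeds as in Lemma~\ref{LEM:stoconv}\,(i), i.e.\ a first-chaos second-moment computation (here via the explicit covariance \eqref{sigma2H}), a time-increment bound of the form $\jb{n}^{-2+2\al}\min(1,|h|\jb{n}^2)^\theta$, and an application of Lemma~\ref{LEM:reg} with the usual $N^{-\delta}\jb{n}^{\delta}$ trick for the Cauchy property. Your explicit splitting of $\dl_h\ft{\<1>}_N(n,t)$ into the semigroup-decay piece and the fresh It\^o integral over $[t,t+h]$ is just a concrete implementation of the ``mean value theorem plus interpolation'' step the paper cites, so there is no substantive difference.
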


We now define the second order stochastic term:
\[
\<20>_N := \I (\<2>_N).
\]

\noi
Then, 
a slight modification of the proof of Proposition \ref{PROP:heat}\,(ii)
presented below
shows that  
 $\<20>_N$ converges to 
\begin{align*}
  \<20>(t) := 
   \I (\<2>)(t)
  =  \int_{-\infty}^t  P(t- t')\<2>(t') dt'
\end{align*}

\noi
in  $C([0,T];\C^{2-2\al-}(\T^2))$ almost surely, 
 provided that $0 < \al < 1$.

From  the regularities $2-2\al-$ and $-\al-$ of $\<20>$ and $\<1>$, 
there is an issue in making sense of the resonant product 
 $\<20>\pe \<1>$
in the deterministic manner when $\al \geq \frac 23$.
For the range $\frac 23 \leq \al < 1$, 
we instead use stochastic analysis to 
define
 the resonant product 
``$\<20>\pe \<1>$''
as a suitable limit of 
$\<21p>_N := \<20>_N\pe \<1>_N.$

\begin{lemma}\label{LEM:heat2}
Let $0<\al<1$.
Given any  $T>0$, 
$\{ \<21p>_N \}_{N \in \N}$ is
a Cauchy sequence
in 
$C([0,T];\C^{2-3\al-\eps}(\T^2))$
for any $\eps > 0$, 
almost surely.
In particular,
denoting the limit by $\<21p>$,
we have
  \[\<21p> \in C([0,T];\C^{2 -3\al-\eps}(\T^2))\]
  
  \noi
almost surely.
\end{lemma}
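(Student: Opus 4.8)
The plan is to adapt the argument of Proposition~\ref{PROP:sto2} to the parabolic setting, where it is in fact simpler because the heat Duhamel operator $\I = (\dt + (1-\Dl))^{-1}$ gains two full derivatives and there is no multilinear dispersion to exploit. As in Section~\ref{SEC:sto1}, by Lemma~\ref{LEM:reg} it suffices to establish the second moment bound
\[
\E\big[ |\ft{\<21p>}_N(n, t)|^2 \big] \les_T \jb{n}^{-2 - 2(2-3\al) +} = \jb{n}^{-6+6\al+}
\]
for all $n \in \Z^2$ and $0 \le t \le T$, uniformly in $N \in \N$, together with the analogous time-difference and difference-in-$N$ estimates \eqref{reg3}--\eqref{reg4}, which follow from a routine modification. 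Expanding the Fourier coefficient of $\<20>_N \pe \<1>_N$ via the paraproduct decomposition \eqref{para1} and the heat Duhamel formula, one is led, exactly as in \eqref{Y1}, to spatial sums over $n = n_1 + n_2 + n_3$ with $|n_1 + n_2| \sim |n_3|$ and $n_1 + n_2 \ne 0$, now weighted by the heat covariance
\[
\s_n(t_1, t_2) := \E\big[ \ft{\<1>}(n, t_1) \ft{\<1>}(-n, t_2) \big] = \frac{e^{-|t_1 - t_2| \jb{n}^2}}{2 \jb{n}^{2(1-\al)}},
\]
which plays the role of the oscillatory covariance \eqref{sigma2} from the wave case but is non-negative and decays exponentially in $|t_1 - t_2|$.

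Following the decomposition \eqref{Y1}--\eqref{Y3}, I would first peel off the contribution $\RR_2$ supported on frequencies $|n| \sim 1$, arising from $n_3 = -(n_1+n_2)$, which lies in $C(\R_+; C^\infty(\T^2))$ almost surely by \eqref{X3} and Lemma~\ref{LEM:SUM} precisely as in the wave case; then I would split the remainder $\RR_1$ into the generic piece $\RR_{11}$ (in which none of $n_1 + n_3$, $n_2 + n_3$ vanishes) and the degenerate pieces $\RR_{12}, \RR_{13}, \RR_{14}$, corresponding respectively to $n_1 + n_3 = 0$ or $n_2 + n_3 = 0$ (with the divergent part renormalized into $\RR_{13}$), the constant counterterm, and $n_1 = n_2 = -n_3$. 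For the main term $\RR_{11}$, I would apply the Jensen/Cauchy--Schwarz argument of \cite{GKO2} (see also \cite{MWX}) to obtain
\[
\E\big[ |\ft\RR_{11}(n,t)|^2 \big] \les \sum_{\substack{n = m + n_3 \\ |m| \sim |n_3|}} \E\big[ |\ft{\<20>}(m, t)|^2 \big]\, \E\big[ |\ft{\<1>}(n_3, t)|^2 \big],
\]
insert the regularity $2 - 2\al -$ of $\<20>$ from Proposition~\ref{PROP:heat}(ii), i.e.~$\E[|\ft{\<20>}(m,t)|^2] \les_T \jb{m}^{-6+4\al+}$, and $\E[|\ft{\<1>}(n_3,t)|^2] = \s_{n_3}(t,t) \les \jb{n_3}^{-2+2\al}$ from Lemma~\ref{LEM:stoconv2}, and conclude with Lemma~\ref{LEM:SUM}(ii) --- whose hypothesis here is exactly $8 - 6\al > 2$, i.e.~$\al < 1$ --- that $\E[|\ft\RR_{11}(n,t)|^2] \les_T \jb{n}^{-6+6\al+}$, which is the target bound.

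For the degenerate terms $\RR_{12}, \RR_{13}, \RR_{14}$, the key point --- and the reason this argument is shorter than the proof of Proposition~\ref{PROP:sto2} --- is that, once the Duhamel heat kernels $e^{-(t-t')\jb{\cdot}^2}$ are integrated in time (each gaining a factor $\jb{\cdot}^{-2}$), for $\al < 1$ all the resulting spatial sums over $n_2$ (and $n_2'$) become absolutely convergent, e.g.~$\sum_{n_2 : |n_2| \sim |n + n_2|} \jb{n+n_2}^{-2} \jb{n_2}^{-2(1-\al)} \les \jb{n}^{-2+2\al}$ by Lemma~\ref{LEM:SUM}(ii); hence no oscillation or symmetrization is needed. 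Concretely, one expands the relevant second moments via Wick's theorem (Lemma~\ref{LEM:Wick}), bounds the remaining covariance factors by $|\s_n(t_1,t_2)| \le \tfrac12 \jb{n}^{-2(1-\al)}$, and sums by Lemma~\ref{LEM:SUM}; each of $\RR_{12}, \RR_{13}, \RR_{14}$ is then seen to be $\les_T \jb{n}^{-6+6\al+}$ (with $\RR_{14}$ even better), so the second moment bound is complete. The main point of care is the bookkeeping of the renormalization counterterm inside $\RR_{13}$ --- in particular, the order integrate-in-time-then-sum must be respected --- and the verification that the sums in $\RR_{11}$ and $\RR_{13}$ are indeed the borderline ones; there is no delicate harmonic analysis here, in contrast to the wave case. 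Finally, the same absolute convergence makes the time-difference and difference-in-$N$ estimates \eqref{reg3}--\eqref{reg4} a routine variant, so Lemma~\ref{LEM:reg} yields that $\{\<21p>_N\}_{N \in \N}$ is a Cauchy sequence in $C([0,T]; \C^{2-3\al-\eps}(\T^2))$ for any $\eps > 0$, almost surely, as claimed.
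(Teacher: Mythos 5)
Your proposal is correct and follows essentially the same route as the paper: the same reduction via Lemma \ref{LEM:reg} to the bound $\E[|\ft{\<21p>}_N(n,t)|^2]\les_T \jb{n}^{-6+6\al+}$, the same decomposition into $\RR_2$ and $\RR_{11},\dots,\RR_{14}$, the Jensen-type bound for $\RR_{11}$ using the second moments of $\<20>$ and $\<1>$, and absolute convergence of the $n_2$, $n_2'$ sums (via Lemma \ref{LEM:SUM}\,(ii) under the resonance constraint $|n_2|\sim|n+n_2|$) for $\RR_{13}$. The only difference is that you spell out details the paper delegates to the proof of Proposition \ref{PROP:sto2}; your exponent bookkeeping matches the paper's.
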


In the following, we only consider the range $\frac 23 \leq \al < 1$
since the case $0 < \al  < \frac 23$ can be handled by 
the standard Da Prato-Debussche trick
as mentioned in Section \ref{SEC:1}.
As in Subsection \ref{SUBSEC:reno}, we 
proceed with the second order expansion \eqref{X1}.
Then, after a proper renormalization, 
the residual term $v = u - \<1>+ \<20>$
satisfies the following equation:
\begin{equation} 
\begin{cases}
\dt v + (1-\Dl)v =
-v^2 - 2v (\<1>-\<20>) - \<20>^2 + 
2( \<20> \pl \<1> +  \<21p> 
+  \<20>\pg \<1>) \\
v |_{t = 0} = v_0, 
\label{SNLH7}
\end{cases}
\end{equation}

\noi
where $v_0 = v_0^\o$ is given by 
\begin{align}
v_0 = u_0 - \<1>(0) + \<20>(0).
\label{IV5}
\end{align}

Given $s < \s$ and $T>0$, define $X(T) 
\subset C([0, T] ; \C^s(\T^2))\cap 
C((0, T]; \C^\s(\T^2))$  by the norm:
\begin{align*}
\|v\|_{X(T)} = \|v \|_{C_T\C^s_x} + \|v\|_{Y(T)}, 
\end{align*}

\noi
where the $Y(T)$-norm is given by 
\[\|v\|_{Y(T)} = \sup_{0< t \leq T} t ^\frac {\s - s}{2}\|v(t) \|_{\C^\s}.\]

\noi
We point out that the $Y(T)$-norm is needed to handle
rough initial data in $\C^s(T^2)$, which does not belong to $\C^{\s}(\T^2)$.
The use of this type of norm, allowing a blowup at time $t = 0$, 
is standard in the study of the parabolic equations.
See, for example,
\cite{BC, MW1}.
We then have the following local well-posedness of the perturbed SNLH \eqref{SNLH7}.

\begin{theorem}\label{THM:LWPv2}
Let $0<\al<1$ and $s > -\al - \eps$ for sufficiently small $\eps > 0$.
Then,
the Cauchy problem \eqref{SNLH7} is locally well-posed
in $\C^s(\T^2)$.
More precisely, 
given any $u_0 \in \C^s(\T^2)$, 
there exist  an almost surely positive stopping time $T  = T(\o)$ 
and   a unique solution $v$ to~\eqref{SNLH7}
in the class\textup{:}
\[  X(T)
\subset C([0, T] ; \C^s(\T^2))\cap 
C((0, T]; \C^\s(\T^2)),\]

\noi
where  $-s < \s <  s+2$. 
Furthermore, 
the solution $v$
depends  continuously 
on the enhanced data set\textup{:}
\begin{align*}
\Si = \big(u_0, \<1>, \<20>,  \<21p>\big)
\end{align*}

\noi
almost surely belonging to 
 the class\textup{:}
\begin{align}
\mathcal{Z}^{s, \eps}_T
&  = \C^s(\T^2) \times 
C([0,T]; \C^{-\al - \eps}(\T^2)) \notag\\
& \hphantom{X}
\times 
C([0,T]; \C^{2-2\al- \eps}(\T^2)) 
\times
C([0,T]; \C^{2-3\al - \eps, \infty}(\T^2)).
\notag
\end{align}
\end{theorem}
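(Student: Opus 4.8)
The plan is to prove Theorem \ref{THM:LWPv2} by a contraction mapping argument applied to the Duhamel formulation of \eqref{SNLH7}, with the enhanced data set $\Si=\big(u_0,\<1>,\<20>,\<21p>\big)\in\mathcal{Z}^{s,\eps}_T$ treated as a fixed (pathwise) input. Writing $P(t)=e^{-t(1-\Dl)}$ as in \eqref{heat1} and $\I(F)(t):=\int_0^t P(t-t')F(t')\,dt'$ for the forward heat Duhamel operator, one recasts \eqref{SNLH7} as the fixed point problem $v=\Psi(v)$, where
\begin{align*}
\Psi(v)(t)=P(t)v_0-\I\big(v^2+2v(\<1>-\<20>)+\<20>^2-2(\<20>\pl\<1>+\<21p>+\<20>\pg\<1>)\big)(t).
\end{align*}
The first step is to record the inhomogeneous Schauder estimate
\begin{align*}
\|\I(F)\|_{C_T\C^{\s_2}_x}\les\sup_{0<t\le T}\int_0^t(t-t')^{\frac{\s_1-\s_2}{2}}\|F(t')\|_{\C^{\s_1}}\,dt'
\end{align*}
for $\s_1\le\s_2<\s_1+2$, which follows at once from Lemma \ref{LEM:Schauder}, together with its weighted companion producing the $Y(T)$-component of the $X(T)$-norm. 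One then fixes an auxiliary exponent $\s$ in the window $\max(s,\al)<\s<\min(s+2,\,2-\al-\eps)$ (nonempty for $0<\al<1$, $\eps$ small, and $s$ in the relevant band $-\al-\eps<s<2-\al-\eps$; the case of larger $s$ reduces to this one by Besov embedding): the condition $\s>\al$ is what makes the resonant product $v\pe\<1>$ meaningful via Lemma \ref{LEM:para}, $\s>s$ makes $Y(T)$ a genuine smoothing norm, and $\s<s+2$ leaves room for the two-derivative gain from $\I$.

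Next I would carry out the term-by-term estimates on a ball of $X(T)$. The source terms not involving $v$ — namely $\<20>^2\in\C^{2-2\al-}$ (using $2-2\al>0$), $\<20>\pl\<1>\in\C^{-\al-}$ and $\<20>\pg\<1>\in\C^{2-3\al-}$ by the paraproduct estimates of Lemma \ref{LEM:para}, and $\<21p>\in\C^{2-3\al-}$ supplied by Lemma \ref{LEM:heat2} — are controlled directly by the Schauder estimate and contribute, after applying $\I$, a fixed element of $C_T\C^s_x\cap Y(T)$ bounded by $\les T^{\delta}\big(1+\|\Si\|_{\mathcal{Z}^{s,\eps}_T}\big)^2$ (the worst, $\I(\<20>\pl\<1>)$, sits in $C_T\C^{2-\al-}_x$, which dictates the ceiling $\s<2-\al-\eps$). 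For the quadratic-in-$v$ terms one uses, at each fixed $t'$, the bilinear bounds
\begin{align*}
\|v(t')^2\|_{\C^s}\les\|v(t')\|_{\C^\s}\|v(t')\|_{\C^s},\qquad\|v(t')(\<1>-\<20>)(t')\|_{\C^{-\al-}}\les\|v(t')\|_{\C^\s}\,\|(\<1>-\<20>)(t')\|_{\C^{-\al-}},
\end{align*}
which follow from Lemma \ref{LEM:para} once $\s>0$, $\s+s>0$, and $\s>\al$; inserting $\|v(t')\|_{\C^\s}\les(t')^{-(\s-s)/2}\|v\|_{Y(T)}$ and then the inhomogeneous Schauder estimate, every resulting time integral converges — at $t'=t$ since the relevant regularities stay below $2-\al-\eps$, at $t'=0$ since $\s-s<2$ — and produces a positive power of $T$. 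Summing up gives, for some $\delta=\delta(\al,s,\eps)>0$,
\begin{align*}
\|\Psi(v)\|_{X(T)}\les\|v_0\|_{\C^s}+T^{\delta}\big(\|v\|_{X(T)}+\|\Si\|_{\mathcal{Z}^{s,\eps}_T}+1\big)^{2},
\end{align*}
together with the analogous Lipschitz bound $\|\Psi(v_1)-\Psi(v_2)\|_{X(T)}\les T^\delta\big(\|v_1\|_{X(T)}+\|v_2\|_{X(T)}+\|\Si\|_{\mathcal{Z}^{s,\eps}_T}\big)\|v_1-v_2\|_{X(T)}$. Since $\|\Si\|_{\mathcal{Z}^{s,\eps}_T}<\infty$ almost surely by Lemma \ref{LEM:stoconv2}, Proposition \ref{PROP:heat}\,(ii), and Lemma \ref{LEM:heat2}, choosing an almost surely positive $T=T(\o)$ small (depending on $\|v_0\|_{\C^s}$ and $\|\Si\|_{\mathcal{Z}^{s,\eps}_T}$) makes $\Psi$ a contraction on a ball of $X(T)$, yielding existence and uniqueness; continuous dependence on $\Si$ in the $\mathcal{Z}^{s,\eps}_T$-topology comes from running the same multilinear estimates on a difference of solutions associated with a difference of data sets.

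The step I expect to be the main obstacle is the time-weight bookkeeping near $t=0$. Because $v_0\in\C^s$ with $s$ possibly negative — indeed $v_0=u_0-\<1>(0)+\<20>(0)$ from \eqref{IV5} lies only in $\C^{-\al-\eps}$ once this theorem is applied to prove Theorem \ref{THM:WP2} — one cannot close the argument in $C_T\C^\s_x$ alone and must work in $X(T)=C_T\C^s_x\cap Y(T)$, carrying the singular factor $(t')^{-(\s-s)/2}$ through each product and through the Duhamel kernel $(t-t')^{(\s_1-\s_2)/2}$, and checking that the combined exponents are simultaneously integrable at $t'=0$ and $>-1$ at $t'=t$ while still leaving a strictly positive power of $T$. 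It is precisely this constraint that pins down the admissible window $\max(s,\al)<\s<\min(s+2,\,2-\al-\eps)$ for the auxiliary exponent; by contrast, once Lemma \ref{LEM:heat2} is in hand the purely stochastic contributions are routine, in sharp distinction to the construction of $\<20>$ and $\<21p>$ themselves.
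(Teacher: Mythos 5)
Your proposal is correct and follows essentially the same route as the paper: a contraction argument for the Duhamel map on $X(T) = C_T\C^s_x \cap Y(T)$, using the Schauder estimate (Lemma \ref{LEM:Schauder}), the paraproduct bounds (Lemma \ref{LEM:para}), the a priori regularities of $\<1>$, $\<20>$, $\<21p>$, and the beta-function computation to absorb the $(t')^{-(\s-s)/2}$ weight. The only differences are cosmetic (the paper groups $\<20>\pl\<1>+\<21p>+\<20>\pg\<1>$ into a single object $\<21>\in C_T\C^{-\al-\eps}_x$ and restricts to $s=-\al-\eps$ for simplicity, while you track the pieces separately and state the admissible window for $\s$ explicitly), and your identification of the time-weight bookkeeping as the delicate point matches where the paper spends its effort.
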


Once we prove Theorem \ref{THM:LWPv2}, 
Theorem \ref{THM:WP2} follows from the same lines
as in  the proof of Theorem \ref{THM:WP}
and thus we omit details.

\begin{proof}[Proof of Theorem \ref{THM:LWPv2}]
Let $0 < T \leq 1$
and fix $\eps > 0$ sufficiently small.
Define a map $\G$ on $X(T)$ by 
\begin{align*}
\G (v) (t) &= P(t) v_0
- \int_0^t P(t - t') 
\big[ v^2 + 2v (\<1>-\<20>) + \<20>^2 - 2 \<21> \big] (t') dt', 
\end{align*}

\noi
where $v_0$ is as in \eqref{IV5} and 
$\<21>$ is as in \eqref{Z1}.
From 
Lemma \ref{LEM:heat2}, 
Proposition \ref{PROP:heat}, and Lemma~\ref{LEM:stoconv2}
with  Lemma \ref{LEM:para}, we see that 
\begin{align}
\<21> \in 
C([0,T]; \C^{-\al - \eps, \infty}(\T^2))
\label{heat3}
\end{align}

\noi
almost surely.

For simplicity, we only consider the case $s = -\al - \eps$.
From Lemmas \ref{LEM:Schauder} and \ref{LEM:para} 
along with 
 Lemma \ref{LEM:stoconv2}, 
Proposition \ref{PROP:heat}, and \eqref{heat3}, 
we have
\begin{align}
\begin{split}
\| \G (v) \|_{L_T^\infty \C_x^s}
&\les \| v_0  \|_{\C^s} + 
\bigg\| \int_0^t \Big( \| v^2  \|_{ \C^{s}} 
+ \| v (\<1>-\<20>) \|_{ \C^{s}} \Big) (t') dt'\bigg\|_{L^\infty_{T}}\\
&\hphantom{X}
+ T\Big( \| \<20>^2 \|_{L_T^\infty \C_x^{s}} + \| \<21> \|_{L_T^\infty \C_x^{s}}
\Big)
 \\
&\les \| u_0 \|_{\C^s} +
\| \<1>(0)\|_{\C^s}  + \|\<20>(0)\|_{\C^s} \\
&\hphantom{X}
+  \int_0^T (t')^{-\frac{\s-s}{2}}  
 dt' \cdot \big( \| v  \|_{ L^\infty_T \C_x^{s}} 
+ \|\<1>-\<20> \|_{L^\infty_T  \C_x^{s}} \big)
\| v  \|_{Y(T)} 
 \\
&\hphantom{X}
+ T\Big( \| \<20> \|^2_{L_T^\infty \C_x^{2-2\al-}} + \| \<21> \|_{L_T^\infty \C_x^{s}}
\Big)
 \\
& \leq 
\| u_0 \|_{\C^s}
+ 
 T^\ta \Big(  \| v \|_{X(T)}^2
+ C_\o  \| v \|_{X(T)}\Big) 
+C_\o  
\end{split}
\label{LWP1H}
\end{align}

\noi
for  some almost surely finite constant $C_\o > 0$
and $\ta> 0$, 
provided that $\al < 1$ and $- s < \s <  s+ 2$.

Next, we estimate the $Y(T)$-norm of $\G(v)$.
Under the condition $\s < s + 2$, 
a change of variable yields
\begin{align}
t^\frac{\s - s}{2} \int_0^t (t - t')^{-\frac{\s-s}{2}}
(t')^{-\frac{\s-s}{2}}dt'
 = t^{1- \frac{\s - s}{2}}B\big(1-\tfrac{\s - s}{2}, 1-\tfrac{\s - s}{2}\big) 
 \les T^{1- \frac{\s - s}{2}}
\label{heat4}
 \end{align}

\noi
for $0 < t \leq T$, provided that $\s < s + 2$.
Here, $B(x, y) = \int_0^1 (1-\tau)^{x-1} \tau^{y-1} d\tau $ denotes the beta function.

Let $\NN(v) = v^2 +  2v (\<1>-\<20>) + \<20>^2 - 2 \<21>$.
Then, 
for 
 $0 < t \leq T$, 
it follows from
 Lemmas~\ref{LEM:Schauder} and~\ref{LEM:para}
 and \eqref{heat4} 
along with 
 Lemma~\ref{LEM:stoconv2}, 
Proposition \ref{PROP:heat}, and \eqref{heat3}
that 
\begin{align}
\begin{split}
t^\frac{\s - s}{2}\| \G (v)(t) \|_{ \C_x^\s}
&\les \| v_0 \|_{\C^s} + 
t^\frac{\s - s}{2} \int_0^t (t - t')^{-\frac{\s-s}{2}}\|\NN(v)(t') \|_{ \C^s}dt'\\
&\les \| u_0 \|_{\C^s} + 
t^{\frac{\s - s}{2} }  \int_0^t (t - t')^{-\frac{\s - s}{2}}
(t')^{- \frac{\s-s}{2}} dt' 
\\
& \hphantom{XXXXXX}
\times
 \big( \|v\|_{L^\infty_T \C^s_x}
+ \|\<1>-\<20> \|_{L^\infty_T  \C_x^{s}} \big)\|v\|_{Y(T)}
+ 
 C_\o\\
&  
\le \| u_0 \|_{\C^s} + 
T^\ta \Big( \| v \|_{X(T)}^2 
+ C_\o \| v \|_{X(T)}\Big) 
+C_\o ,  
\end{split}
\label{heat5}
\end{align}

\noi
 provided that $\al < 1$ and $- s < \s <  s+ 2$.

Taking a supremum of the left-hand side of \eqref{heat5} over $0 < t \leq T$, 
it follows from \eqref{LWP1H}
and~\eqref{heat5} that 
\begin{align}
\begin{split}
\| \G (v) \|_{X(T)}
& \leq 
\| u_0 \|_{\C^s}
+ 
 T^\ta \Big( \| v \|_{X(T)}^2 
+ C_\o \| v \|_{X(T)}\Big) 
+C_\o.  
\end{split}
\label{heat6}
\end{align}

\noi
By a similar computation, we also obtain a difference estimate:
\begin{align}
\| \G (v_1) - \G (v_2) \|_{X(T)}
&\les T^\ta \Big( \| v_1 \|_{X(T)} + \| v_2 \|_{X(T)} 
+ C_\o \Big) \| v_1-v_2 \|_{X(T)}.
\label{LWP2H}
\end{align}

\noi
Therefore, we conclude from \eqref{heat6} and 
 \eqref{LWP2H}
that 
a standard contraction  argument yields local well-posedness of \eqref{SNLH7}.
Moreover, an analogous  computation 
shows that  the solution $v \in X(T)$ depends continuously on 
the enhanced data set $\Si = \big(u_0, \<1>, \<20>,  \<21p>\big)$.
\end{proof}

\subsection{Proof of  Proposition \ref{PROP:heat}\,(i)}
\label{SUBSEC:51}

Given $n \in \Z^2$ and  $0 \leq t_2\leq t_1$,
define $\kk_{n}(t_1, t_2 )$ by 
\begin{align}
\begin{split}
\kk_{n}(t_1, t_2 )   
:\! & =
\E  \big[  \ft{\<1>}(n, t_1)  \,  \ft{\<1>}(-n, t_2) \big]
  =
   \int_{-\infty}^{t_2} e^{-(t_1-t') \jb{n}^2} \jb{n}^{\al} e^{-(t_2-t') \jb{n}^2} \jb{n}^{\al} d t' \\
&  = \frac{e^{-(t_1-t_2) \jb{n}^2}}{2 \jb{n}^{2(1-\al)}}.
\end{split}
\label{sigma2H}
\end{align}

First, we prove that
$\<2>_N \in C(\R_+;\C^{-2\al-} (\T^2))$ 
with a uniform (in $N$) bound,  
almost surely.
In view of \eqref{so4b} and 
\eqref{so4bHb}, 
by repeating the computation 
in the proof of Lemma \ref{LEM:stoconv}\,(ii)
(in particular  \eqref{2var1} and \eqref{2var2})
and applying   Lemma \ref{LEM:SUM}, we have
\begin{align}
\E\big[|\ft{\<2>}_N(n, t)|^2\big]
&\les \sum_{n = n_1 + n_2} \kk_{n_1}(t,t) \kk_{n_2}(t,t) \notag\\
&\sim  \sum_{n = n_1 + n_2} \frac{1}{\jb{n_1}^{2(1-\al)} \jb{n_2}^{2(1-\al)}}
\les \jb{n}^{-2+4\al},
\label{heat2-a}
\end{align}

\noi
provided that $0<\al<\frac 12$.
Since the time difference estimate follows from a slight modification,
Lemma \ref{LEM:reg} implies that $\<2>_N \in C(\R_+;\C^{-2\al-} (\T^2))$ almost surely.
Moreover, a slight modification
of the argument  yields
that $\{ \<2>_N \}_{N \in \N}$ is almost surely a Cauchy sequence in $C(\R_+;\C^{-2\al-} (\T^2))$, 
thus converging to some limit 
$\<2>$.
Since the  required modification is standard, 
we omit the details here.

Next, we show that
when $\al \ge \frac 12$,  
we show that $\{ \<2>_N\}_{N \in \N}$ forms
a divergent sequence  in $C([0,T]; \D'(\T^2))$ for any  $T> 0$ almost surely.
From \eqref{heat2-a}, we have
\begin{align}
\begin{split}
\E\big[|\ft{\<2>}_N(n, t)|^2\big]
&\ges \sum_{\substack{n_1 \in \Z^2 \\ |n| \ll |n_1| \le N}}
\frac{1}{\jb{n_1}^{2(1-\al)} \jb{n-n_1}^{2(1-\al)}}
\ges
\begin{cases}
\log N, & \text{if } \al = \frac 12 \\
N^{-2+4\al}, & \text{if } \al>\frac 12
\end{cases} \\
& \to \infty
\end{split}
\label{Yk0}
\end{align}
as $N \to \infty$.
Then, from Kolmogorov's three-series theorem, Kolmogorov's zero-one law, and 
Remark~\ref{REM:divcov} with \eqref{Yk0}, we have
\[P \Big( \big| \lim_{N \to \infty} \ft{\<2>}_N (n,t) \big| <\infty \Big) =0.\]

\noi
In particular,
we obtain that $\{ \<2>_N \}_{N \in \N}$ 
forms a divergent sequence  in $C([0,T]; \mathcal{D}'(\T^2))$ almost surely for $\al \ge \frac 12$.
This concludes the proof of Proposition \ref{PROP:heat}\,(i).

\subsection{Proof of  Proposition \ref{PROP:heat}\,(ii)}
\label{SUBSEC:52}

First, we prove that
$\<20>_N \in C(\R_+;\C^{2-2\al-} (\T^2))$ 
with a uniform (in $N$) bound
on each bounded time interval $[0, T]$, 
almost surely.
As in Subsection~\ref{SUBSEC:32}, it suffices to show 
\begin{align}
\E\big[|\ft{\<20>}_N(n, t)|^2\big]
\les_T \jb{n}^{-2-2(2-2\al)}
\label{H20a}
\end{align}
for any $n \in \Z^2$ and $0 \le t \le T$, uniformly in $N \in \N$.

We only consider $n \ne 0 $ for simplicity.
Proceeding as in \eqref{S5}, we have 
\begin{align}
&\E \big[ |\ft{\<20>}_N(n, t)|^2\big] \notag \\
& = \frac{1}{\pi^2}  \sum_{\substack{n = n_1 + n_2\\n_1 \ne \pm n_2 \\ |n_1|, |n_2| \le N}}
\int_0^{t} e^{-(t - t_1) \jb{n}^2}
 \int_0^{t_1} e^{-(t - t_2) \jb{n}^2} 
\kk_{n_1}(t_1, t_2) \kk_{n_2}(t_1, t_2)  dt_2dt_1\notag\\
& \hphantom{X}
+ \frac{1}{2\pi^2} \cdot \ind_{n \in 2 \Z^2\setminus\{0\}} 
 \int_0^t e^{-(t - t_1) \jb{n}^2}
 \int_0^{t_1} e^{-(t - t_2) \jb{n}^2} 
\E\Big[ \ft{\<1>}_N\big(\tfrac{n}{2}, t_1\big)^2 \, \cj{\ft{\<1>}_N\big(\tfrac{n}{2}, t_2\big)^2 } \Big]  dt_2dt_1
\notag\\
& =: 
\1 (n, t)+ \II(n, t),
\label{S5H}
\end{align}

\noi
where $\kk_{n_j}(t_1, t_2)$ is as in \eqref{sigma2H}.
From \eqref{sigma2H}, we see that 
the contribution from $\II(n, t)$ satisfies \eqref{H20a}.
Hence,  we focus on $\1 (n,t)$.
By \eqref{S5H} and \eqref{sigma2H},
we have
\begin{align}
\begin{split}
\1 (n, t)
&\sim \sum_{\substack{n = n_1 + n_2\\n_1 \ne \pm n_2 \\ |n_1|, |n_2| \le N}} \frac{1}{\jb{n_1}^{2(1-\al)} \jb{n_2}^{2(1-\al)}}
\\
& \hphantom{X} \times
e^{-2t\jb{n}^2} 
\int_0^{t} e^{t_1 (\jb{n}^2-\jb{n_1}^2-\jb{n_2}^2)}
 \int_0^{t_1} e^{t_2 (\jb{n}^2+\jb{n_1}^2+\jb{n_2}^2)}
dt_2dt_1\\
&= \sum_{\substack{n = n_1 + n_2\\n_1 \ne \pm n_2 \\ |n_1|, |n_2| \le N}} \frac{1}{\jb{n_1}^{2(1-\al)} \jb{n_2}^{2(1-\al)}}
\\
& \hphantom{X} \times
\frac{1}{\jb{n}^2+\jb{n_1}^2+\jb{n_2}^2}
\bigg(
\frac{1-e^{-2t\jb{n}^2}}{\jb{n}^2}
- e^{-2t\jb{n}^2} \frac{1-e^{t(\jb{n}^2-\jb{n_1}^2-\jb{n_2}^2)}}{\jb{n_1}^2+\jb{n_2}^2-\jb{n}^2} \bigg)
\\
&\les\frac{1}{\jb{n}^2} \sum_{\substack{n = n_1 + n_2\\n_1 \ne \pm n_2}} \frac{1}{\jb{n_1}^{2(1-\al)} \jb{n_2}^{2(1-\al)}}
\frac{1}{\jb{n}^2+\jb{n_1}^2+\jb{n_2}^2}.
\end{split}
\label{S9H}
\end{align}

\noi
By 
separately estimating the contributions
from $|n_1|\sim|n_2| \gg |n|$
and $|n_1| \sim |n| \ges |n_2|$ (or $|n_2| \sim |n| \ges |n_1|$)
with Lemma \ref{LEM:SUM},
we  see that 
the contribution from $\1(n, t)$ also satisfies~\eqref{H20a}
for $0 < \al < 1$.
This proves \eqref{H20a}.

Next, we show that when $\al \geq 1$,  $\{ \<20>_N \}_{N \in \N}$ does not converge in $C([0,T]; \D'(\T^2))$ for any  $T> 0$ almost surely.
From Remark \ref{REM:divcov},
it suffices to show that
\begin{align}
\lim_{N \to \infty} \E \big[ |\<20>_N(n,t)|^2 \big] = \infty
\label{S8H}
\end{align}
for $\al \ge 1$ under an appropriate assumption on $t>0$.

Since the second term $\II(n, t)$ in \eqref{S5H} does not involve any summation, 
it is finite.
From~\eqref{S9H}, 
it is easy to see that 
 the contribution to $\1(n, t)$
from  $|n|\sim \max(|n_1|, |n_2|)$ is finite.
Indeed, assuming $|n_1| \les |n|\sim|n_2|$ without loss of generality, 
the contribution to~\eqref{S9H} from this case is bounded by 
\begin{align*}
\frac{1}{\jb{n}^{6-2\al}}
\sum_{\substack{n_1 \in \Z^2\\|n_1| \les|n|}}
\frac{1}{\jb{n_1}^{2-2\al} }
\les \jb{n}^{-6+4\al}. 
\end{align*}

It remains to estimate  $\1 (n,t)$ under the constraint  $|n| \ll |n_1| \sim |n_2|$.
When $t \gg |n|^{-2}$, 
the contribution to $\1(n, t)$ from this case  is bounded below by
\[
\begin{split}
\frac{1}{\jb{n}^2}
\sum_{\substack{n_1 \in \Z^2 \\ |n| \ll |n_1| \le N}} \frac{1}{\jb{n_1}^{6-4\al}}
&\ges _n
\begin{cases}
\log N, &\text{if } \al =1 \\
N^{-4+4\al}, &\text{if } \al>1
\end{cases} \\
&\longrightarrow  \infty
\end{split}
\]
as $N \to \infty$.
This proves \eqref{S8H} for $t \gg |n|^{-2}$,  when $\al \ge 1$.

\subsection{Proof of Lemma \ref{LEM:heat2}}

As in Subsection \ref{SUBSEC:33}, 
it suffices to show 
\begin{align}
\E \big[ |\Ft [\<20>_N \pe \<1>_N](n,t)|^2 \big]
\les_T \jb{n}^{-2-2(2-3\al)}
\label{Y1H}
\end{align}
for $n \in \Z^2$ and $0 \le t \le T$, uniformly in $N \in \N$.
As in \eqref{Y1}, we decompose $\<20>_N \pe \<1>_N$ into two parts:
\begin{align*}
\Ft [\<20>_N \pe \<1>_N] (n, t)
&=\frac{1}{4\pi^2}
\sum_{\substack{n = n_1 + n_2+n_3\\|n_1 + n_2| \sim |n_3|\\ n_1 + n_2 \ne 0}} 
\int_0^t  e^{-(t-t') \jb{ n_1 + n_2 }^2} 
\ft{\<1>}_N(n_1, t')\ft{\<1>}_N(n_2, t') dt' \cdot  \ft{\<1>}_N(n_3, t)
\notag\\
&\quad+  
\frac{1}{4\pi^2}
\sum_{\substack{n_1\in \Z^2 \\ |n_1| \le N}} \ind_{|n|\sim 1} \int_0^t e^{-(t - t')} \cdot \big(  |\ft{\<1>}(n_1, t')|^2  -  \kk_{n_1}(t')\big) 
dt' \cdot  \ft{\<1>}_N(n, t) \notag\\
& =: \ft \RR_1(n, t) + \ft \RR_2(n, t).
 \end{align*}
Moreover, we decompose $\RR_1$ as 
 \begin{align}
\ft \RR_1(n, t)
& = 
\frac{1}{4\pi^2}
\sum_{\substack{n = n_1 + n_2+n_3\\|n_1 + n_2| \sim |n_3|\\ (n_1 + n_2)(n_2 + n_3) (n_3 + n_1) \ne 0}} 
\int_0^t  e^{-(t-t') \jb{ n_1 + n_2 }^2} 
\ft{\<1>}_N(n_1, t')\ft{\<1>}_N(n_2, t') dt' \cdot  \ft{\<1>}_N(n_3, t) \notag\\
& \hphantom{X} 
+ \frac{1}{2\pi^2} 
\int_0^t \ft{\<1>}_N(n, t') 
\bigg[\sum_{\substack{n_2 \in \Z^2\\|n_2| \sim |n+n_2|\ne 0 \\ |n_2| \le N}}  e^{-(t-t') \jb{ n + n_2 }^2} \notag\\
& \hphantom{XXXXXXXXXlllllXX}
\times 
\Big(\ft{\<1>}(n_2, t')\ft{\<1>}(-n_2, t) - \kk_{n_2}(t,  t')\Big)\bigg] dt'    \notag\\
& \hphantom{X} 
+ \frac{1}{2\pi^2}
\int_0^t \ft{\<1>}_N(n, t') 
\bigg[\sum_{\substack{n_2 \in \Z^2\\| n_2| \sim |n+n_2| \ne 0 \\ |n_2| \le N}} e^{-(t-t') \jb{ n + n_2 }^2} 
 \kk_{n_2}(t, t')\bigg] dt'   \notag\\
& \hphantom{X} 
- \frac{1}{4\pi^2}\cdot \ind_{n\ne0}
\int_0^t e^{-(t-t') \jb{ 2n }^2}
\, (\ft{\<1>}_N(n, t'))^2dt' \cdot \ft{\<1>}_N(-n, t)\notag\\
&
=: \ft \RR_{11}(n, t)+  \ft \RR_{12}(n, t)+  \ft \RR_{13}(n, t)+  \ft \RR_{14}(n, t).
\label{Y3H}
\end{align}

\noi
Proceeding as in the proof of 
 Proposition \ref{PROP:sto2}, 
 we can easily show that 
$ \ft \RR_{11}$, 
$ \ft \RR_{12}$, and $\ft \RR_{14}$ satisfy \eqref{Y1H}.

It remains to consider  $\ft \RR_{13}$.
Under the constraint $| n_2| \sim |n+n_2|$, 
we have $|n_2 |\ges |n|$.
Then, from \eqref{Y3H} with \eqref{sigma2H}, 
we have 
\begin{align*}
\E\big[|\ft \RR_{13}(n, t)|^2\big]
&\les_T \frac{1}{\jb{n}^{2-2\al}}
\sum_{\substack{n_2 \in \Z^2\\| n_2| \sim |n+n_2| }}
\frac{1}{\jb{n+n_2}^2\jb{n_2}^{2-2\al}}\\
& \hphantom{XX}
\times \sum_{\substack{n_2' \in \Z^2\\| n_2'| \sim |n+n_2'| }} 
\frac{1}{\jb{n+n'_2}^2\jb{n'_2}^{2-2\al}}\\
& \les \jb{n}^{-6 + 6\al}
\end{align*}

\noi
for $0 < \al < 1$, 
verifying \eqref{Y1H}.

\begin{ackno}\rm 

T.O.~ was supported by the European Research Council (grant no.~637995 ``ProbDynDispEq''
and grant no.~864138 ``SingStochDispDyn").
M.O.~was supported by JSPS KAKENHI Grant numbers
JP16K17624 and JP20K14342.
The authors would like to thank Martin Hairer
for a helpful conversation.
M.O.~would like to thank the School of Mathematics at the University of Edinburgh
for its hospitality, where this manuscript was prepared.
The authors would like to thank the anonymous referee for numerous  helpful comments,
which have improved the presentation of the paper.
\end{ackno}

\end{document}